\newtheorem{theorem}{Theorem}
\newtheorem{definition}{Definition}
\newtheorem{lemma}{Lemma}
\newtheorem{property}{Property}
\newtheorem{corollary}{Corollary}
\begin{document}

\begin{frontmatter}



\title{Polar Linear Canonical Wavelet Transform: Theory and Its Application}


\author{Hui Zhao$^{a,b}$}
\author{Bing-Zhao Li$^{a,b}$\corref{mycorrespondingauthor}}
\cortext[mycorrespondingauthor]{Corresponding author}\ead{li\_bingzhao@bit.edu.cn}

\address{$^{a}$School of Mathematics and Statistics, Beijing Institute of Technology, Beijing 100081, China}
\address{$^{b}$Beijing Key Laboratory on MCAACI, Beijing Institute of Technology, Beijing 100081, China}

\begin{abstract}
 The polar wavelet transform (PWT) has been proven to be a powerful mathematical tool for signal and image processing in recent years. Due to the increasing demand for directional representations of signals in engineering, it is impossible to fully exploit the intrinsic directional features of signals to describe high-dimensional signals like images. Focusing on this problem, the polar linear canonical wavelet transform (PLCWT) is proposed in this paper. Firstly, the theory of the PLCWT is investigated in detail, including its definition, basic properties and inversion formula. Secondly, the convolution and correlation theorems of the PLCWT are derived. Thirdly, uncertainty principles related to the PLCWT are obtained. Finally, the potential application of the PLCWT in image edge detection is discussed. Simulation results verify the correctness and effectiveness of the proposed method. 
\end{abstract}

\begin{keyword}
Linear canonical transform\sep Wavelet transform\sep Convolution theorem\sep Uncertainty principles\sep Edge detection


\end{keyword}
%

\end{frontmatter}


\section{Introduction}
Wavelet transform (WT) is a powerful mathematical analysis tool that has been widely used in many fields, including radar target recognition, signal analysis, image processing and medicine, etc \cite{ref1,ref2,ref3,ref4,ref5,ref6,ref7,ref8,ref9,ref10}. To effectively analyze and characterize signals, researchers have proposed a series of mathematical transform methods, such as fractional Fourier transform (FRFT) \cite{ref29,ref30,ref31,ref32}, short-time fractional Fourier transform (STFRFT) \cite{ref33,ref3444}, linear canonical transform (LCT) \cite{ref34,ref35,ref36,ref37,ref38,ref39}, fractional wavelet transform (FRWT) \cite{ref13,ref14,ref15,ref16,ref166}, linear canonical wavelet transform (LCWT) \cite{ref17,ref18,ref19,ref20}, fractional Stockwell transform (FRST) \cite{ref201,ref202,ref203}, linear canonical Stockwell transform (LCST) \cite{ref204}, etc. Due to the increasing requirements for the direction feature representation of signals like images in image processing, the above methods cannot optimally analyze the directionality of high-dimensional signals. They can only simply describe the directionality of an image and are limited in describing the rotation invariance of an image. This makes it difficult to accurately analyze and process images in pre-selected directions.

Based on the problem mentioned above, a number of mathematical tools have been proposed, such as the polar wavelet transform (PWT) \cite{ref22,ref23}, polar Stockwell transform (PST) \cite{ref11,ref12} and polar offset linear canonical transform \cite{ref21}, etc. These tools have been widely used in image compression, texture analysis, edge detection and other fields \cite{ref24,ref25,ref26}. With the continuous development of information processing technology, high-dimensional signals have become more and more complex in practical applications, and most of them are non-stationary signals. However, the traditional high-dimensional mathematical analysis tools can not achieve ideal results in the processing and analysis of these signals, so finding mathematical methods to deal with such signals has become a research hotspot.

In this paper, a new transform method is proposed, namely the polar linear canonical wavelet transform (PLCWT), which combines the advantages of the PWT and the LCT. The PLCWT not only has high directional analysis, which improves the efficiency and performance of capturing local information along different angles, but also has the ability to characterize two-dimensional signals on the time-LCT domain plane. The main contributions of this study are summarized as follows.
\begin{enumerate}[(1)]
	\item To better analyze images in the combined domain of time, direction and the LCT, a novel PLCWT is proposed, and its basic properties are investigated.
	\item The convolution and correlation theorems are obtained based on the convolution and correlation operators. And the uncertainty principles related to the PLCWT are proved.
	\item The potential application of the PLCWT in image edge detection is discussed. Simulation results verify the correctness and effectiveness of the proposed transform method.
\end{enumerate}

The remainder of this study is structured as follows: Section \ref{Preli} provides some basic preliminaries. In Section \ref{PLCWT}, the theory of the PLCWT is investigated, including its definition, basic properties and inversion formula. In Section \ref{Convolution}, we present convolution and correlation theorems for the PLCWT. In Section \ref{Uncertainty}, the three uncertainty principles of the PLCWT are proved. In Section \ref{Applications}, a potential application is presented to demonstrate the importance of the PLCWT. Finally, Section \ref{Con} concludes the paper.

\section{Preliminaries}
\label{Preli}
\subsection{Polar Wavelet Transform}
In this subsection, we provide some necessary background and notations of the PWT.\

The polar wavelet (rotational wavelet) is a family of functions constructed from a mother wavelet $\psi\in L^{2}(\mathbb{R}^{2})$ by the combined action of translation, dilation and rotations as \cite{ref22,ref26}
\begin{align}
	\begin{split}
		\psi_{\boldsymbol{b},a,\theta}(\boldsymbol{t})=\frac{1}{a}\psi\left(\frac{R_{-\theta}\left(\boldsymbol{t}-\boldsymbol{b} \right) }{a} \right), 
	\end{split}
	\label{1}
\end{align}
where $a\in{\mathbb{R}^{+}},\,\boldsymbol{b}\in{\mathbb{R}^{2}}, \,\theta\in\left[0,2\pi \right]$, and $R_{\theta}$ stands for rotation parameter give by \cite{ref22,ref26}
\begin{align}
	\begin{split}
		R_{\theta}(\boldsymbol{t})=
		\begin{gathered}
			\begin{pmatrix} \cos\theta & \sin\theta \\ -\sin\theta & \cos\theta \end{pmatrix}
			\begin{pmatrix} t_{1} \\ t_{2}  \end{pmatrix}
		\end{gathered},\quad \forall\,\boldsymbol{t}=(t_{1},t_{2})\in{\mathbb{R}^{2}}. 
	\end{split}
	\label{2}
\end{align}

A polar wavelet is a function $\psi\in L^{2}(\mathbb{R}^{2})$ which satisfies the condition \cite{ref22,ref26}
\begin{align}
	\begin{split}
		C_{\psi}=\int_{0}^{\infty}\int_{0}^{2\pi}\bigg| \mathscr{F}[\psi](aR_{-\theta}\boldsymbol{\xi})\bigg|^{2}\frac{{\rm{d}}a\rm{d}\theta}{a}<\infty,
	\end{split}
	\label{3}
\end{align}
where $\mathscr{F}$ is the polar Fourier transform of $\psi$. Condition (\ref{3}) is called the admissibility condition which guarantees the existence of the inversion formula for the continuous polar wavelet transform \cite{ref22,ref26}.

\begin{definition}[PWT \cite{ref22,ref26}]\label{1}Let $\psi\in L^{2}(\mathbb{R}^{2})$ be an admissible polar wavelet, the polar wavelet transform (PWT) of any $f\in L^{2}(\mathbb{R}^{2})$ to $\psi$ is denoted by 
\begin{align}
	\begin{split}
		W_{f}(\boldsymbol{b},a,\theta)=\frac{1}{a}\int_{\mathbb{R}^{2}}f(\boldsymbol{t})\overline{\psi\left(\frac{R_{-\theta}\left(\boldsymbol{t}-\boldsymbol{b} \right) }{a} \right)}{\rm{d}}\boldsymbol{t}.
	\end{split}
	\label{4}
\end{align}
	
It can also be defined as a conventional convolution
\begin{align}
	\begin{split}
		W_{f}(\boldsymbol{b},a,\theta)&=f(\boldsymbol{t})\ast\left(a^{-1}\overline{\psi (R_{-\theta}(-\boldsymbol{t}/a))} \right)\\
		&=\left\langle f,\psi_{\boldsymbol{b},a,\theta} \right\rangle_{L^{2}(\mathbb{R}^{2})},
	\end{split}
	\label{5}
\end{align}
where $\ast$ and $\left\langle\cdot,\cdot \right\rangle$ denote the classical convolution operator and the inner product, respectively.
\end{definition}

\subsection{Linear canonical transform}
\begin{definition}[LCT \cite{ref34,ref35,ref36,ref37}]\label{2}The linear canonical transform (LCT) with parameters $M=(A,B;C,D)$ of a signal $f(\boldsymbol{t})\in L^{2}(\mathbb{R}^{2})$ is defined by 
\begin{align}
	\begin{split}
		F^{M}(\boldsymbol{\xi})=\mathscr{L}^{M}[f(\boldsymbol{t})](\boldsymbol{\xi})=\begin{cases}
			\int_{\mathbb{R}^{2}}f(\boldsymbol{t})h_{M}(\boldsymbol{t},\boldsymbol{\xi}){\rm{d}}\boldsymbol{t},   &B\neq0  \\
			\sqrt{D}e^{i \frac{CD\boldsymbol{\xi}^{2}}{2}}\delta(\boldsymbol{t}-{\rm{d}}\boldsymbol{\xi}),  &B=0
		\end{cases}
		\label{6}
	\end{split}
\end{align}
where the kernel $h_{M}(\boldsymbol{t},\boldsymbol{\xi})$ is
\begin{align}	
	h_{M}(\boldsymbol{t},\boldsymbol{\xi})=\dfrac{1}{2\pi B}e^{j\left[ \frac{A}{2B}|\boldsymbol{t}|^{2}-\frac{1}{B}\boldsymbol{t}\cdot\boldsymbol{\xi}+\frac{D}{2B}|\boldsymbol{\xi}|^{2}\right] }.
	\label{7}
\end{align}
Here, $\mathscr{L}^{M}$ denotes the LCT operator, $A$, $B$, $C$, and $D$ are real numbers satisfying $AD-BC=1$.
\end{definition}
	
The convolution and product theorems of the LCT are introduced in \cite{ref40}
\begin{align}	
	f(\boldsymbol{t})\,\Theta_{M}\, g(\boldsymbol{t})=e^{-j \frac{A}{2B}\boldsymbol{t}^{2}}\left[ \left(f(\boldsymbol{t})e^{j\frac{A}{2B}\boldsymbol{t}^{2}} \right)\ast g(\boldsymbol{t}) \right], 
	\label{8}
\end{align}
\textcolor{red}{where $\Theta_{M}$ indicates the linear canonical convolution operator}, and $\ast$ denotes the conventional convolution operator for the FT. Then,
the convolution theorem of the LCT for the signal $f(\boldsymbol{t})$ and $g(\boldsymbol{t})$ is given by \cite{ref40}
\textcolor{red}{\begin{align}	
	f(\boldsymbol{t})\,\Theta_{M}\, g(\boldsymbol{t})\stackrel{\mathscr{L}^{M}}{\longleftrightarrow}F^{M}(\boldsymbol{\xi})G(\frac{\boldsymbol{\xi}}{B}), 
	\label{9}
\end{align}}
where $F^{M}(\boldsymbol{\xi})$ and $G(\boldsymbol{\xi})$ denote the LCT of $f(\boldsymbol{t})$ and the FT of $f(\boldsymbol{t})$, respectively.

\section{Polar Linear Canonical Wavelet Transform }
\label{PLCWT}
In this section, a new definition of the PLCWT is proposed, which addresses the limitations of the PWT and the LCT. Then, some basic properties are derived. More importantly, it has shown a high degree of directionality compared to previusly defined LCWT.

\subsection{Definition}
\label{De}
In this subsection, a polar linear canonical wavelet transform (PLCWT) is defined based on the convolution operation in the LCT domain. 
\begin{definition}[PLCWT]\label{3}The PLCWT of a signal $f(\boldsymbol{t})\in L^{2}(\mathbb{R}^{2})$ with parameters $M=(A,B;C,D)$ and polar mother wavelet $\psi\in L^{2}(\mathbb{R}^{2})$ is defined as
\begin{align}
	\begin{split}
		W^{M}_{f}(\boldsymbol{b},a,\theta)&=f(\boldsymbol{t})\Theta_{M}\left(a^{-1}\overline{\psi (R_{-\theta}(-\boldsymbol{t}/a))} \right)\\
		&=e^{-j \frac{A}{2B}\boldsymbol{b}^{2}}\left\langle f(\cdot)e^{-j \frac{A}{2B}(\cdot)^{2}},a^{-1}\psi_{\boldsymbol{b},a,\theta}(\cdot) \right\rangle_{L^{2}(\mathbb{R}^{2})}\\
		&=\int_{\mathbb{R}^{2}}f(\boldsymbol{t})\overline{\psi_{\boldsymbol{b},a,\theta}^{M}(\boldsymbol{t})}{\rm{d}}\boldsymbol{t}.
	\end{split}
	\label{10}
\end{align}
where the transform kernel $\psi_{\boldsymbol{b},a,\theta}^{M}$ satisfies
\begin{align}
	\begin{split}
		\psi_{\boldsymbol{b},a,\theta}^{M}(\boldsymbol{t})=e^{-j\frac{A}{2B}(\boldsymbol{t}^{2}-\boldsymbol{b}^{2})}\psi_{\boldsymbol{b},a,\theta}(\boldsymbol{t}),
	\end{split}
	\label{11}
\end{align}
where $\psi_{\boldsymbol{b},a,\theta}(\boldsymbol{t})$ is give by (\ref{1}), $a\in{\mathbb{R}^{+}},\,\boldsymbol{b}\in{\mathbb{R}^{2}}, \,\theta\in\left[0,2\pi \right]$. When $M=(0,1;-1,0)$, the PLCWT reduced to the PWT.\
\end{definition}
	A polar mother wavelet $\psi\in L^{2}(\mathbb{R}^{2})$ related to the LCT is known as the admissibility condition if 
	\begin{align}
		\begin{split}
			C_{\psi,M}=\!\!\int_{0}^{\infty}\!\!\int_{0}^{2\pi}\bigg|\mathscr{L}^{M}\left\lbrace e^{-j \frac{A}{2B}(\cdot)^{2}}\psi \right\rbrace (aR_{-\theta}\boldsymbol{\xi})\bigg|^{2}\frac{{\rm{d}}a\rm{d}\theta}{a}<\infty.
		\end{split}
		\label{12}
	\end{align}	
	The identity (\ref{12}) is called the admissibility condition which guarantees the existence of the inversion formula for the PLCWT.\
	
	The PLCWT of a signal $f(\boldsymbol{t})$ in (\ref{10}) can be rewritten as
	\begin{align}
		\begin{split}
			W^{M}_{f}(\boldsymbol{b},a,\theta)&=e^{-j\frac{A}{2B}\boldsymbol{b}^{2}}\int_{\mathbb{R}^{2}}\left( f(\boldsymbol{t})e^{j\frac{A}{2B}\boldsymbol{t}^{2}}\right)\overline{\psi_{\boldsymbol{b},a,\theta}(\boldsymbol{t})}{\rm{d}}\boldsymbol{t}.
		\end{split}
		\label{13}
	\end{align}
	The computation of the PLCWT corresponds to the following steps:
	\begin{enumerate}
		\item A product by a chirp signal, i.e. $f(\boldsymbol{t})\rightarrow  \widetilde{f}(\boldsymbol{t})=f(\boldsymbol{t})\cdot e^{j\frac{A}{2B}\boldsymbol{t}^{2}}$;
		\item A traditional PWT, i.e. $\widetilde{f}(\boldsymbol{t})\rightarrow W_{\widetilde{f}}\,(\boldsymbol{b},a,\theta)$;
		\item Another product by a chip signal, i.e. 
		\begin{align}
			\begin{split}
				W_{\widetilde{f}}\,(\boldsymbol{b},a,\theta)\rightarrow W^{M}_{f}\,(\boldsymbol{b},a,\theta)=W_{\widetilde{f}}\,(\boldsymbol{b},a,\theta)e^{-j\frac{A}{2B}\boldsymbol{b}^{2}}.\nonumber
			\end{split}
		\end{align}
	\end{enumerate}
	
	It can be seen from the above steps that the PLCWT can be realized through the discrete algorithm of the PWT. This also means that the computational complexity of the PLCWT depends on the computational complexity of the PWT, and the implementation time of the PWT is $O(N)$. Therefore, the computational complexity of the PLCWT is $O(N)$. As can be seen from the above, the PLCWT is easy to implement in practice and has low complexity. Furthermore, the proposed PLCWT can be decomposed around the PWT embedded in its definition, as shown in Fig. \ref{fig:1}.\
	\begin{figure}[t!]
		\centering
		\includegraphics[width=1.0\linewidth]{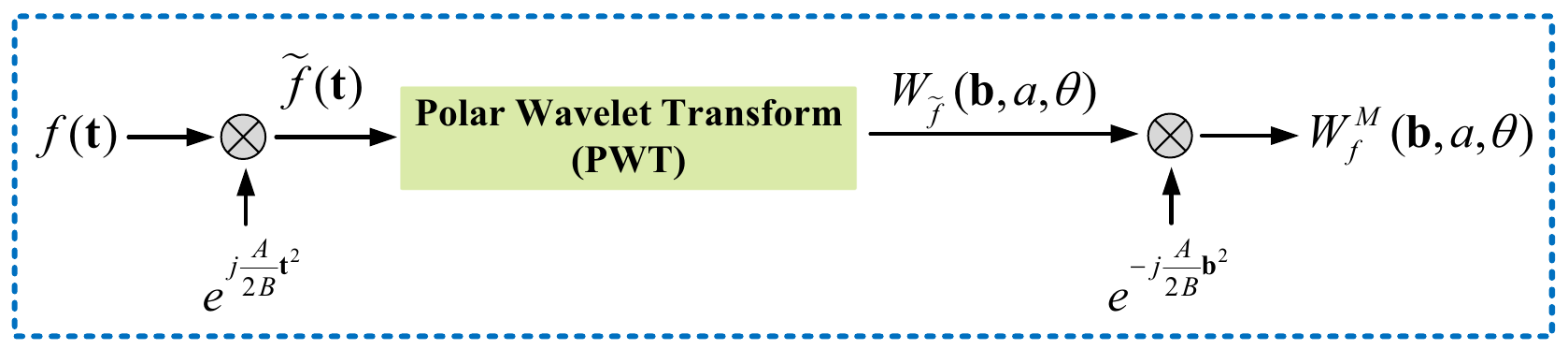}\\
		\caption{Structure of computing the proposed PLCWT.}
		\label{fig:1} 
	\end{figure}
	\subsection{Basic Propreties}
	\label{Ba}
	In this subsection, we investigate some basic properties of the PLCWT.\
	\begin{property}[Linearity]\label{property1}
		If $\psi\in L^{2}(\mathbb{R}^{2})$ is polar mother wavelet, $f\in L^{2}(\mathbb{R}^{2})$, $g\in L^{2}(\mathbb{R}^{2})$, and $h(\boldsymbol{t})=\alpha f(\boldsymbol{t})+\beta g(\boldsymbol{t})$, then 
		\begin{align}
			\begin{split}
				W^{M}_{h}(\boldsymbol{b},a,\theta)=\alpha W^{M}_{f}(\boldsymbol{b},a,\theta)+\beta W^{M}_{g}(\boldsymbol{b},a,\theta),  
			\end{split}
			\label{14}
		\end{align}
		where $\alpha,\beta\in{\mathbb{C}}$.
	\end{property}
	\begin{property}[Scaling]\label{property2}
		If $\psi\in L^{2}(\mathbb{R}^{2})$ is polar mother wavelet, $f\in L^{2}(\mathbb{R}^{2})$, and $h(\boldsymbol{t})=f(c\boldsymbol{t})$, then 
		\begin{align}
			\begin{split}
				W^{M}_{h}(\boldsymbol{b},a,\theta)=\frac{1}{c} W^{M_{1}}_{f}(\boldsymbol{b}c,ac,\theta),  
			\end{split}
			\label{15}
		\end{align}
		where $M_{1}=(A_{1},B_{1};C_{1},D_{1})$ satisfies $\frac{A_{1}}{B_{1}}=\frac{A}{BC^{2}}$, $c\in{\mathbb{C}}$, and $c \neq0$.
	\end{property}
	\begin{proof}
		Form the PLCWT, we have
		\begin{align}
			\begin{split}
				&\quad W^{M}_{h}(\boldsymbol{b},a,\theta)=\left\langle f(c\boldsymbol{t}),\psi^{M}_{\boldsymbol{b},a,\theta} \right\rangle_{L^{2}(\mathbb{R}^{2})}\\
				&=\frac{1}{a}\int_{\mathbb{R}^{2}}f(c\boldsymbol{t})\overline{\psi\left(\frac{R_{-\theta}\left(\boldsymbol{t}-\boldsymbol{b} \right) }{a} \right)}e^{j\frac{A}{2B}(\boldsymbol{t}^{2}-\boldsymbol{b}^{2})}{\rm{d}}\boldsymbol{t}\\
				&=\frac{1}{ac}\int_{\mathbb{R}^{2}}f(\boldsymbol{z})\overline{\psi\left(\frac{R_{-\theta}\left(\dfrac{\boldsymbol{z}}{c}-\boldsymbol{b} \right) }{a} \right)}e^{j\frac{A}{2B}\left(  \frac{\boldsymbol{t}^{2}-c^{2}\boldsymbol{b}^{2}}{c^{2}} \right)  }{\rm{d}}\boldsymbol{z}\\
				&=\frac{1}{c}\left\lbrace \frac{1}{a}\int_{\mathbb{R}^{2}}f(\boldsymbol{z})\overline{\psi\left(\frac{R_{-\theta}\left(\boldsymbol{z}-\boldsymbol{b}c \right) }{ac} \right)}e^{j\frac{A}{2Bc^{2}}\left[ \boldsymbol{t}^{2}-(\boldsymbol{b}c)^{2}\right]}\right\rbrace {\rm{d}}\boldsymbol{z}\\
				&=\frac{1}{c}W^{M_{1}}_{f}(\boldsymbol{b}c,ac,\theta),
			\end{split}
			\label{16}
		\end{align}
		where $M_{1}=(A_{1},B_{1};C_{1},D_{1})$ satisfies $\frac{A_{1}}{B_{1}}=\frac{A}{BC^{2}}$.
	\end{proof}
	\begin{property}[Translation]\label{property3}
		If $\psi\in L^{2}(\mathbb{R}^{2})$ is polar mother wavelet, $f\in L^{2}(\mathbb{R}^{2})$, and $h(\boldsymbol{t})=f(\boldsymbol{t}-\boldsymbol{c})e^{-j\frac{A}{B}\boldsymbol{t}\cdot\boldsymbol{c}}$, then 
		\begin{align}
			\begin{split}
				W^{M}_{h}(\boldsymbol{b},a,\theta)=e^{-j\frac{A}{B}\boldsymbol{b}\cdot\boldsymbol{c}}\,W^{M}_{f}(\boldsymbol{b}-\boldsymbol{c},a,\theta),  \quad \forall\, \boldsymbol{c}\in \mathbb{R}^{2}.
			\end{split}
			\label{17}
		\end{align}
	\end{property}
	\begin{proof}
		For $\boldsymbol{c}\in \mathbb{R}^{2}$, we get
		\begin{align}
			\begin{split}
				&\quad W^{M}_{h}(\boldsymbol{b},a,\theta)\\
				&=\left\langle f(\boldsymbol{t}-\boldsymbol{c})e^{-j\frac{A}{B}\boldsymbol{t}\cdot\boldsymbol{c}},\psi^{M}_{\boldsymbol{b},a,\theta} \right\rangle_{L^{2}(\mathbb{R}^{2})}\\
				&=\frac{1}{a}\int_{\mathbb{R}^{2}}f(\boldsymbol{t}-\boldsymbol{c})e^{-j\frac{A}{B}\boldsymbol{t}\cdot\boldsymbol{c}}\overline{\psi\left(\frac{R_{-\theta}\left(\boldsymbol{t}-\boldsymbol{b} \right) }{a} \right)}e^{j\frac{A}{2B}(\boldsymbol{t}^{2}-\boldsymbol{b}^{2})}{\rm{d}}\boldsymbol{t}\\
				&=\textcolor{red}{\frac{1}{a}e^{-j\frac{A}{B}\boldsymbol{b}\cdot\boldsymbol{c}}\int_{\mathbb{R}^{2}} f(\boldsymbol{z})\overline{\psi\left(\frac{R_{-\theta}\left(\boldsymbol{z}-\left(\boldsymbol{b}-\boldsymbol{c} \right) \right) }{a} \right)} e^{j\frac{A}{2B}\left[ \boldsymbol{z}^{2}-\left(\boldsymbol{b}-\boldsymbol{c} \right)^{2} \right]  }{\rm{d}}\boldsymbol{z}}\\
				&=e^{-j\frac{A}{B}\boldsymbol{b}\cdot\boldsymbol{c}}\,W^{M}_{f}(\boldsymbol{b}-\boldsymbol{c},a,\theta).
			\end{split}
			\label{18}
		\end{align}
		The proof is completed. 
	\end{proof}
	Next we prove some lemmas to study other fundamental properties.
	\begin{lemma}\label{Lemma2}
		If $\psi\in L^{2}(\mathbb{R}^{2})$ is polar mother wavelet. For any $f\in L^{2}(\mathbb{R}^{2})$, then the LCT of $\psi_{\boldsymbol{b},a,\theta}^{M}(\boldsymbol{t})$ is given by 
		\begin{align}
			\begin{split}
				\mathscr{L}^{M}\left\lbrace\psi_{\boldsymbol{b},a,\theta}^{M} \right\rbrace(\boldsymbol{\xi})=&ae^{j\left[  \frac{A}{2B}\boldsymbol{b}^{2}+\frac{D}{2B}\boldsymbol{\xi}^{2}-\frac{1}{B}\boldsymbol{b}\cdot\boldsymbol{\xi}-\frac{D}{2B}(a R_{-\theta}\boldsymbol{\xi})^{2}\right] }\\
				&\times\mathscr{L}^{M}\left\lbrace e^{-j \frac{A}{2B}(\cdot)^{2}}\psi \right\rbrace (aR_{-\theta}\boldsymbol{\xi}).
			\end{split}
			\label{21}
		\end{align}
	\end{lemma}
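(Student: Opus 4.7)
The plan is to unfold the left-hand side using the definition of the LCT and the explicit expression for $\psi^{M}_{\boldsymbol{b},a,\theta}$ given in (\ref{11}), then reduce it to an integral that can be recognized as the LCT of $e^{-jA(\cdot)^{2}/(2B)}\psi$ evaluated at $aR_{-\theta}\boldsymbol{\xi}$.

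First, I would substitute $\psi^{M}_{\boldsymbol{b},a,\theta}(\boldsymbol{t}) = e^{-j\frac{A}{2B}(\boldsymbol{t}^{2}-\boldsymbol{b}^{2})}\,a^{-1}\psi(R_{-\theta}(\boldsymbol{t}-\boldsymbol{b})/a)$ into
\begin{equation*}
\mathscr{L}^{M}\{\psi^{M}_{\boldsymbol{b},a,\theta}\}(\boldsymbol{\xi}) = \frac{1}{2\pi B}\int_{\mathbb{R}^{2}}\psi^{M}_{\boldsymbol{b},a,\theta}(\boldsymbol{t})\,e^{j\left[\frac{A}{2B}\boldsymbol{t}^{2}-\frac{1}{B}\boldsymbol{t}\cdot\boldsymbol{\xi}+\frac{D}{2B}\boldsymbol{\xi}^{2}\right]}\,{\rm d}\boldsymbol{t}.
\end{equation*}
The crucial cancellation here is that the chirp $e^{-j\frac{A}{2B}\boldsymbol{t}^{2}}$ in $\psi^{M}_{\boldsymbol{b},a,\theta}$ precisely kills the $e^{j\frac{A}{2B}\boldsymbol{t}^{2}}$ coming from the LCT kernel, leaving an integral whose only dependence on $\boldsymbol{t}$ sits inside $\psi$ and in the linear phase $e^{-j\boldsymbol{t}\cdot\boldsymbol{\xi}/B}$. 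The prefactor $e^{j\frac{A}{2B}\boldsymbol{b}^{2}}$ and the $\boldsymbol{\xi}$-only phase $e^{j\frac{D}{2B}\boldsymbol{\xi}^{2}}$ come out of the integral immediately.

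Next I would perform the change of variable $\boldsymbol{z}=R_{-\theta}(\boldsymbol{t}-\boldsymbol{b})/a$, so that $\boldsymbol{t} = aR_{\theta}\boldsymbol{z}+\boldsymbol{b}$ with Jacobian $a^{2}$. The linear phase becomes
\begin{equation*}
\boldsymbol{t}\cdot\boldsymbol{\xi} = a(R_{\theta}\boldsymbol{z})\cdot\boldsymbol{\xi} + \boldsymbol{b}\cdot\boldsymbol{\xi} = a\,\boldsymbol{z}\cdot(R_{-\theta}\boldsymbol{\xi}) + \boldsymbol{b}\cdot\boldsymbol{\xi},
\end{equation*}
where the second equality uses orthogonality of $R_{\theta}$, namely $R_{\theta}^{T}=R_{-\theta}$. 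Pulling the new $\boldsymbol{b}\cdot\boldsymbol{\xi}$ phase out and collecting the $a^{2}/a$ prefactor into a single $a$ leaves
\begin{equation*}
\mathscr{L}^{M}\{\psi^{M}_{\boldsymbol{b},a,\theta}\}(\boldsymbol{\xi}) = \frac{a}{2\pi B}\,e^{j\left[\frac{A}{2B}\boldsymbol{b}^{2}-\frac{1}{B}\boldsymbol{b}\cdot\boldsymbol{\xi}+\frac{D}{2B}\boldsymbol{\xi}^{2}\right]}\int_{\mathbb{R}^{2}}\psi(\boldsymbol{z})\,e^{-j\frac{a}{B}\boldsymbol{z}\cdot(R_{-\theta}\boldsymbol{\xi})}\,{\rm d}\boldsymbol{z}.
\end{equation*}

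Finally, I would separately compute $\mathscr{L}^{M}\{e^{-j\frac{A}{2B}(\cdot)^{2}}\psi\}(\boldsymbol{\eta})$: the same chirp-cancellation trick reduces it to
\begin{equation*}
\mathscr{L}^{M}\{e^{-j\frac{A}{2B}(\cdot)^{2}}\psi\}(\boldsymbol{\eta}) = \frac{1}{2\pi B}e^{j\frac{D}{2B}\boldsymbol{\eta}^{2}}\int_{\mathbb{R}^{2}}\psi(\boldsymbol{z})\,e^{-j\frac{1}{B}\boldsymbol{z}\cdot\boldsymbol{\eta}}\,{\rm d}\boldsymbol{z}.
\end{equation*}
Setting $\boldsymbol{\eta}=aR_{-\theta}\boldsymbol{\xi}$ matches exactly the residual integral above up to the chirp $e^{j\frac{D}{2B}(aR_{-\theta}\boldsymbol{\xi})^{2}}$. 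Solving for the integral and substituting back then reassembles the four phase terms in the claimed exponent $\frac{A}{2B}\boldsymbol{b}^{2}+\frac{D}{2B}\boldsymbol{\xi}^{2}-\frac{1}{B}\boldsymbol{b}\cdot\boldsymbol{\xi}-\frac{D}{2B}(aR_{-\theta}\boldsymbol{\xi})^{2}$, delivering (\ref{21}).

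The proof is essentially mechanical; the only subtle point — and the step most prone to sign errors — is correctly moving the rotation from $\boldsymbol{z}$ onto $\boldsymbol{\xi}$ in the linear phase via $R_{\theta}^{T}=R_{-\theta}$, and keeping track of the chirp $\frac{D}{2B}(aR_{-\theta}\boldsymbol{\xi})^{2}$ that must be inserted and then subtracted when identifying the LCT of $e^{-j\frac{A}{2B}(\cdot)^{2}}\psi$.
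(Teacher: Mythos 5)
Your proposal is correct and follows essentially the same route as the paper: unfold the LCT definition, exploit the cancellation of the chirp in $\psi^{M}_{\boldsymbol{b},a,\theta}$ against the quadratic phase of the kernel, change variables to $\boldsymbol{z}=R_{-\theta}(\boldsymbol{t}-\boldsymbol{b})/a$ (picking up the Jacobian $a^{2}$ and moving the rotation onto $\boldsymbol{\xi}$ via orthogonality), and then insert and subtract the chirp $\frac{D}{2B}(aR_{-\theta}\boldsymbol{\xi})^{2}$ to identify the residual integral as $\mathscr{L}^{M}\{e^{-j\frac{A}{2B}(\cdot)^{2}}\psi\}(aR_{-\theta}\boldsymbol{\xi})$. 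The only cosmetic difference is that you perform the translation and dilation in a single substitution and state the identification of the last integral as a separate computation, whereas the paper splits the change of variables into two steps and does the identification inline.
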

	\begin{proof}
		Base on the definition of the LCT, and let $\boldsymbol{w}=\boldsymbol{t}-\boldsymbol{b}$, we obtain 
		\begin{align}
			\begin{split}      \mathscr{L}^{M}\left\lbrace\psi_{\boldsymbol{b},a,\theta}^{M} \right\rbrace(\boldsymbol{\xi})
				&=\frac{1}{2\pi B}\int_{\mathbb{R}^{2}}\psi_{\boldsymbol{b},a,\theta}^{M}(\boldsymbol{t})e^{j\left( \frac{A}{2B}\boldsymbol{t}^{2}-\frac{1}{B}\boldsymbol{t}\cdot\boldsymbol{\xi}+\frac{D}{2B}\boldsymbol{\xi}^{2}\right) }{\rm{d}}\boldsymbol{t}\\
				&=\frac{1}{2\pi aB}\int_{\mathbb{R}^{2}}\psi\left(\frac{R_{-\theta}\boldsymbol{w} }{a} \right)e^{j\left[  \frac{A}{2B}\boldsymbol{b}^{2}-\frac{1}{B}(\boldsymbol{w}+\boldsymbol{b})\cdot\boldsymbol{\xi}+\frac{D}{2B}\boldsymbol{\xi}^{2}\right]}{\rm{d}}\boldsymbol{w}.
			\end{split}
			\label{22}
		\end{align}
		Using the change variables $\boldsymbol{z}=\frac{R_{-\theta}\,\boldsymbol{w}}{a}$, we get
		\begin{align}
			\begin{split}
			\mathscr{L}^{M}\left\lbrace\psi_{\boldsymbol{b},a,\theta}^{M} \right\rbrace(\boldsymbol{\xi})&=\frac{a}{2\pi B}\int_{\mathbb{R}^{2}}\psi(\boldsymbol{z})e^{j\left(\frac{A}{2B}\boldsymbol{b}^{2}-\frac{1}{B}aR_{\theta}\boldsymbol{z}\cdot\boldsymbol{\xi}-\frac{1}{B}\boldsymbol{b}\cdot\boldsymbol{\xi}+\frac{D}{2B}\boldsymbol{\xi}^{2} \right) }{\rm{d}}\boldsymbol{z}\\
				&=\frac{a}{2\pi B}\int_{\mathbb{R}^{2}}\psi(\boldsymbol{z})e^{j\left[ -\frac{1}{B}\boldsymbol{z}\cdot (aR_{-\theta}\boldsymbol{\xi})+\frac{D}{2B}(aR_{-\theta}\boldsymbol{\xi})^{2}\right]}{\rm{d}}\boldsymbol{z}\\
				&\quad \times e^{j\left[ \frac{A}{2B}\boldsymbol{b}^{2}-\frac{1}{B}\boldsymbol{b}\cdot\boldsymbol{\xi}+\frac{D}{2B}\boldsymbol{\xi}^{2}-\frac{D}{2B}(aR_{-\theta}\boldsymbol{\xi})^{2} \right]}\\
				&=ae^{j\left[  \frac{A}{2B}\boldsymbol{b}^{2}+\frac{D}{2B}\boldsymbol{\xi}^{2}-\frac{1}{B}\boldsymbol{b}\cdot\boldsymbol{\xi}-\frac{D}{2B}(a R_{-\theta}\boldsymbol{\xi})^{2}\right] }\\
				&\quad \times\mathscr{L}^{M}\left\lbrace e^{-j \frac{A}{2B}(\cdot)^{2}}\psi \right\rbrace (aR_{-\theta}\boldsymbol{\xi}).
			\end{split}
			\label{23}
		\end{align}
		The proof is completed. 
	\end{proof}
	\begin{lemma} \label{Lemma3}
		If $\psi\in L^{2}(\mathbb{R}^{2})$ is polar mother wavelet. For arbitrary $f\in L^{2}(\mathbb{R}^{2})$, we have
		\begin{align}
			\begin{split}
				W^{M}_{f}(\boldsymbol{b},a,\theta)&=a\!\int_{\mathbb{R}^{2}}\overline{e^{j\left[  \frac{A}{2B}\boldsymbol{b}^{2}+\frac{D}{2B}\boldsymbol{\xi}^{2}-\frac{1}{B}\boldsymbol{b}\boldsymbol{\xi}-\frac{D}{2B}(a R_{-\theta}\boldsymbol{\xi})^{2}\right] }}\\
				&\quad\times \overline{\mathscr{L}^{M}\!\left\lbrace e^{-j \frac{A}{2B}(\cdot)^{2}}\psi \right\rbrace\!(aR_{-\theta}\boldsymbol{\xi})}\mathscr{L}^{M}\!\left\lbrace f\right\rbrace(\boldsymbol{\xi}){\rm{d}}\boldsymbol{\xi}.
			\end{split}
			\label{24}
		\end{align}
	\end{lemma}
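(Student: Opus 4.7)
The plan is to express the PLCWT as an inner product in $L^2(\mathbb{R}^2)$ and then transport that inner product into the LCT domain via the Parseval/Plancherel identity for the LCT, after which Lemma \ref{Lemma2} immediately identifies the transformed wavelet factor.

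First I would start from the inner product representation contained in Definition \ref{3}, namely
\begin{align*}
W^{M}_{f}(\boldsymbol{b},a,\theta)=\int_{\mathbb{R}^{2}} f(\boldsymbol{t})\,\overline{\psi_{\boldsymbol{b},a,\theta}^{M}(\boldsymbol{t})}\,{\rm d}\boldsymbol{t}=\left\langle f,\psi^{M}_{\boldsymbol{b},a,\theta}\right\rangle_{L^{2}(\mathbb{R}^{2})}.
\end{align*}
Next I would invoke the Parseval relation of the LCT, which preserves inner products, so that
\begin{align*}
\left\langle f,\psi^{M}_{\boldsymbol{b},a,\theta}\right\rangle_{L^{2}(\mathbb{R}^{2})}=\left\langle \mathscr{L}^{M}\{f\},\mathscr{L}^{M}\{\psi^{M}_{\boldsymbol{b},a,\theta}\}\right\rangle_{L^{2}(\mathbb{R}^{2})}=\int_{\mathbb{R}^{2}}\mathscr{L}^{M}\{f\}(\boldsymbol{\xi})\,\overline{\mathscr{L}^{M}\{\psi^{M}_{\boldsymbol{b},a,\theta}\}(\boldsymbol{\xi})}\,{\rm d}\boldsymbol{\xi}.
\end{align*}
Finally, I would plug in the explicit formula for $\mathscr{L}^{M}\{\psi^{M}_{\boldsymbol{b},a,\theta}\}(\boldsymbol{\xi})$ supplied by Lemma \ref{Lemma2}, take the complex conjugate of the chirp prefactor together with the dilated-rotated transformed wavelet, pull the scalar factor $a$ (which is real and positive) outside the integral, and collect terms to arrive exactly at the right-hand side of (\ref{24}).

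The main obstacle, as I see it, is justifying the Parseval step for the LCT in the two-dimensional setting with the parameter matrix $M=(A,B;C,D)$: one must either cite or briefly verify that $\langle \mathscr{L}^{M} f,\mathscr{L}^{M} g\rangle=\langle f,g\rangle$ for $f,g\in L^{2}(\mathbb{R}^{2})$, which follows from the unitarity of $\mathscr{L}^{M}$ established via the kernel (\ref{7}) and the constraint $AD-BC=1$. Once this identity is available, the rest of the derivation is a direct substitution with no analytical difficulty.
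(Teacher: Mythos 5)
Your proposal matches the paper's proof essentially line for line: the paper also writes $W^{M}_{f}(\boldsymbol{b},a,\theta)=\langle f,\psi^{M}_{\boldsymbol{b},a,\theta}\rangle_{L^{2}(\mathbb{R}^{2})}$, passes to $\langle \mathscr{L}^{M}\{f\},\mathscr{L}^{M}\{\psi^{M}_{\boldsymbol{b},a,\theta}\}\rangle_{L^{2}(\mathbb{R}^{2})}$ via the LCT Parseval relation, and then substitutes the expression from Lemma \ref{Lemma2}. Your added remark about verifying unitarity of $\mathscr{L}^{M}$ is a reasonable point of care, but the argument itself is the same as the paper's.
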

	\begin{proof}
		From LCT and Lemma \ref{Lemma2}, we obtain
		\begin{align}
			\begin{split}
				W^{M}_{f}(\boldsymbol{b},a,\theta)&=\left\langle f,\psi^{M}_{\boldsymbol{b},a,\theta} \right\rangle_{L^{2}(\mathbb{R}^{2})}\\
				&=\left\langle \mathscr{L}^{M}\left\lbrace f\right\rbrace ,\mathscr{L}^{M}\left\lbrace \psi^{M}_{\boldsymbol{b},a,\theta}\right\rbrace\right\rangle_{L^{2}(\mathbb{R}^{2})}\\
				&=\int_{\mathbb{R}^{2}}\mathscr{L}^{M}\left\lbrace f\right\rbrace (\boldsymbol{\xi})\overline{\mathscr{L}^{M}\left\lbrace \psi^{M}_{\boldsymbol{b},a,\theta}\right\rbrace(\boldsymbol{\xi})}{\rm{d}}\boldsymbol{\xi}\\
				&=a\!\int_{\mathbb{R}^{2}}\overline{e^{j\left[  \frac{A}{2B}\boldsymbol{b}^{2}+\frac{D}{2B}\boldsymbol{\xi}^{2}-\frac{1}{B}\boldsymbol{b}\boldsymbol{\xi}-\frac{D}{2B}(a R_{-\theta}\boldsymbol{\xi})^{2}\right] }}\\
				&\quad \times\overline{\mathscr{L}^{M}\!\left\lbrace e^{-j \frac{A}{2B}(\cdot)^{2}}\psi \right\rbrace\!(aR_{-\theta}\boldsymbol{\xi})}\mathscr{L}^{M}\!\left\lbrace f\right\rbrace(\boldsymbol{\xi}){\rm{d}}\boldsymbol{\xi}.
			\end{split}
			\label{25}
		\end{align}
		The proof is completed. 
	\end{proof}
	\begin{property}[Orthogonality Property]\label{property4}
		If $\psi\in L^{2}(\mathbb{R}^{2})$ is polar mother wavelet. Then, $\forall f, g\in L^{2}(\mathbb{R}^{2})$, we have
		\begin{align}
			\begin{split}
				\int_{0}^{2\pi}\!\int_{0}^{+\infty}\!\int_{\mathbb{R}^{2}}W^{M}_{f}(\boldsymbol{b},a,\theta)&\overline{W^{M}_{g}(\boldsymbol{b},a,\theta)}\frac{{\rm{d}}\boldsymbol{b}{\rm{d}}a{\rm{d}}\theta}{a^{3}}=C_{\psi,M}\,\left\langle f,g \right\rangle_{L^{2}(\mathbb{R}^{2})}, 
			\end{split}
			\label{26}
		\end{align}
		where $C_{\psi,M}$ is give by (\ref{12}).
	\end{property}
	\begin{proof}
		By Lemma \ref{Lemma3}, we get
		\begin{align}
			\begin{split}
				&\quad \,\,\int_{0}^{2\pi}\!\int_{0}^{+\infty}\!\int_{\mathbb{R}^{2}}W^{M}_{f}(\boldsymbol{b},a,\theta)\overline{W^{M}_{g}(\boldsymbol{b},a,\theta)}\frac{{\rm{d}}\boldsymbol{b}{\rm{d}}a{\rm{d}}\theta}{a^{3}}\\
				&=\int_{0}^{2\pi}\!\int_{0}^{+\infty}\!\int_{\mathbb{R}^{2}}a\!\int_{\mathbb{R}^{2}}e^{j\left[  -\frac{A}{2B}\boldsymbol{b}^{2}-\frac{D}{2B}\boldsymbol{\xi}^{2}+\frac{1}{B}\boldsymbol{b}\boldsymbol{\xi}+\frac{D}{2B}(a R_{-\theta}\boldsymbol{\xi})^{2}\right] }\mathscr{L}^{M}\left\lbrace f\right\rbrace(\boldsymbol{\xi})\,{\rm{d}}\boldsymbol{\xi}\\
				&\quad \times a\,\overline{\mathscr{L}^{M}\!\left\lbrace e^{-j \frac{A}{2B}(\cdot)^{2}}\psi\, \right\rbrace\!(aR_{-\theta}\boldsymbol{\xi})}\int_{\mathbb{R}^{2}}e^{j\left[  \frac{A}{2B}\boldsymbol{b}^{2}+\frac{D}{2B}\boldsymbol{\xi}^{2}-\frac{1}{B}\boldsymbol{b}\boldsymbol{\xi}-\frac{D}{2B}(a R_{-\theta}\boldsymbol{\xi})^{2}\right] }\\
				&\quad\times \mathscr{L}^{M}\!\left\lbrace e^{-j \frac{A}{2B}(\cdot)^{2}}\psi \right\rbrace\!(aR_{-\theta}\boldsymbol{\xi})\overline{\mathscr{L}^{M}\!\left\lbrace g\right\rbrace(\boldsymbol{\xi})}\,{\rm{d}}\boldsymbol{\xi}\,\frac{{\rm{d}}\boldsymbol{b}{\rm{d}}a{\rm{d}}\theta}{a^{3}}\\
				&=\frac{1}{a}\int_{0}^{2\pi}\!\int_{0}^{+\infty}\!\bigg|\mathscr{L}^{M}\left\lbrace e^{-j \frac{A}{2B}(\cdot)^{2}}\psi \right\rbrace (aR_{-\theta}\boldsymbol{\xi})\bigg|^{2}{\rm{d}}a{\rm{d}}\theta\!\int_{\mathbb{R}^{2}}\!\mathscr{L}^{M}\!\left\lbrace f\right\rbrace(\boldsymbol{\xi})\,\overline{\mathscr{L}^{M}\!\left\lbrace g\right\rbrace(\boldsymbol{\xi})}{\rm{d}}\boldsymbol{\xi}\\
				&=C_{\psi,M}\,\left\langle \mathscr{L}^{M}\!\left\lbrace f\right\rbrace(\boldsymbol{\xi})\,,\,\mathscr{L}^{M}\!\left\lbrace g\right\rbrace(\boldsymbol{\xi}) \right\rangle_{L^{2}(\mathbb{R}^{2})}\\
				&=C_{\psi,M}\,\left\langle f,g \right\rangle_{L^{2}(\mathbb{R}^{2})}.
			\end{split}
			\label{27}
		\end{align}
		The proof is completed. 
	\end{proof} 
	\begin{corollary} 
		If $f=g$, Property \ref{property4} becomes
		\begin{align}
			\begin{split}
				\int_{0}^{2\pi}\int_{0}^{+\infty}\int_{\mathbb{R}^{2}}\bigg|W^{M}_{f}(\boldsymbol{b},a,\theta)\bigg|^{2}\frac{{\rm{d}}\boldsymbol{b}{\rm{d}}a{\rm{d}}\theta}{a^{3}}=C_{\psi,M}\left\langle f,g \right\rangle_{L^{2}(\mathbb{R}^{2})}.
			\end{split}
			\label{28}
		\end{align}
	\end{corollary}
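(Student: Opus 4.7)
The plan is immediate: simply instantiate Property~\ref{property4} at the special choice $g=f$ and read off the result. I would first observe that the left-hand side of~(\ref{26}), namely
$\int_{0}^{2\pi}\!\int_{0}^{+\infty}\!\int_{\mathbb{R}^{2}} W^{M}_{f}(\boldsymbol{b},a,\theta)\,\overline{W^{M}_{g}(\boldsymbol{b},a,\theta)}\,\frac{\mathrm{d}\boldsymbol{b}\,\mathrm{d}a\,\mathrm{d}\theta}{a^{3}}$, reduces under $g=f$ to an integral of $W^{M}_{f}\,\overline{W^{M}_{f}} = |W^{M}_{f}|^{2}$, which is exactly the left-hand side of~(\ref{28}). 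Both the integration measure $\tfrac{\mathrm{d}\boldsymbol{b}\,\mathrm{d}a\,\mathrm{d}\theta}{a^{3}}$ and the domain $\mathbb{R}^{2}\times(0,+\infty)\times[0,2\pi]$ are preserved by the substitution, so nothing on the left needs rewriting.

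Next I would substitute $g=f$ on the right-hand side of~(\ref{26}), obtaining $C_{\psi,M}\,\langle f,f\rangle_{L^{2}(\mathbb{R}^{2})} = C_{\psi,M}\,\|f\|_{L^{2}(\mathbb{R}^{2})}^{2}$. This matches the right-hand side of~(\ref{28}) with the convention that $g$ there is understood to mean $f$ (equivalently, the bracket can be read as the squared $L^{2}$-norm). Thus the identity~(\ref{28}) follows without any new computation, and the admissibility constant $C_{\psi,M}$ as defined in~(\ref{12}) guarantees finiteness of both sides whenever $f\in L^{2}(\mathbb{R}^{2})$.

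There is no substantive obstacle in this proof, since the corollary is a direct specialization of Property~\ref{property4}; the only point worth flagging is the slight notational inconsistency on the right-hand side of~(\ref{28}), where the inner product $\langle f,g\rangle$ should be read as $\langle f,f\rangle = \|f\|_{L^{2}(\mathbb{R}^{2})}^{2}$ in order to be consistent with the hypothesis $f=g$. Once that convention is adopted, the corollary holds automatically and exhibits the energy-preservation (Plancherel-type) character of the PLCWT: up to the admissibility factor $C_{\psi,M}$, the total energy of the transform $W^{M}_{f}$ over the combined position-scale-orientation space equals the $L^{2}$-energy of the input signal $f$.
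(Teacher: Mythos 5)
Your proposal is correct and coincides with the paper's (implicit) argument: the corollary is a direct specialization of Property~\ref{property4} obtained by setting $g=f$ in (\ref{26}), and no further computation is needed. Your observation that the right-hand side of (\ref{28}) should read $\left\langle f,f \right\rangle_{L^{2}(\mathbb{R}^{2})}=\Vert f\Vert^{2}_{L^{2}(\mathbb{R}^{2})}$ is also accurate and flags a genuine typographical slip in the statement.
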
 
	\begin{property}[Parseval Formula]\label{property5}
		If $\psi\in L^{2}(\mathbb{R}^{2})$ is polar mother wavelet, \textcolor{red}{$f\in L^{2}(\mathbb{R}^{2})$.} Then
		\begin{align}
			\begin{split}
				\int_{\mathbb{R}^{2}}\bigg|\mathscr{L}^{M}\left\lbrace W^{M}_{f}(\boldsymbol{b},a,\theta)\right\rbrace (\boldsymbol{\xi})\bigg|^{2}{\rm{d}}\boldsymbol{\xi}&=\int_{\mathbb{R}^{2}}\bigg|W^{M}_{f}(\boldsymbol{b},a,\theta)\bigg|^{2}{\rm{d}}\boldsymbol{b}=\Vert f\Vert^{2}_{L^{2}(\mathbb{R}^{2})}. 
			\end{split}
			\label{29}
		\end{align}
	\end{property}
	\begin{proof}
		The proof can be easily obtained.
	\end{proof}
	\begin{property}[Energy Preserving Relation]\label{property6}
		If $\psi\in L^{2}(\mathbb{R}^{2})$ is polar mother wavelet, $f\in L^{2}(\mathbb{R}^{2})$. Then
		\begin{align}
			\begin{split}
				\int_{0}^{2\pi}\!\int_{0}^{+\infty}\!\int_{\mathbb{R}^{2}}\bigg|W^{M}_{f}(\boldsymbol{b},a,\theta)\bigg|^{2}\frac{{\rm{d}}\boldsymbol{b}{\rm{d}}a{\rm{d}}\theta}{a^{3}}=C_{\psi,M}\Vert f\Vert^{2}_{L^{2}(\mathbb{R}^{2})}, 
			\end{split}
			\label{30}
		\end{align}
		where $C_{\psi,M}$ is give by (\ref{12}).
	\end{property}
	\begin{proof}
		The proof can be easily deduced by Property \ref{property5}.
	\end{proof}
	\subsection{Inversion Formula}
	The identity (\ref{12}) is called the admissibility condition which guarantees the existence of the inversion formula for the PLCWT.
	\begin{lemma}\label{lemma4}
		If $\psi\in L^{2}(\mathbb{R}^{2})$ is polar mother wavelet. Then, the PLCWT of every $f\in L^{2}(\mathbb{R}^{2})$ can be changed to the PWT, that is
		\begin{align}
			\begin{split}
				W^{M}_{f}(\boldsymbol{b},a,\theta)=e^{-j\frac{A}{2B}\boldsymbol{b}^{2}}W_{\widetilde{f}}\,(\boldsymbol{b},a,\theta), 
			\end{split}
			\label{31}
		\end{align}
		where $\widetilde{f}(\boldsymbol{t})=f(\boldsymbol{t})\cdot e^{j\frac{A}{2B}\boldsymbol{t}^{2}}$.
	\end{lemma}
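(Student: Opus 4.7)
The plan is to prove this by directly unpacking the definition of the PLCWT given in Definition~\ref{3} and recognizing that the kernel factorizes into a pure $\boldsymbol{b}$-dependent chirp multiplied by a standard polar wavelet atom. Essentially, the identity is already implicit in the rewriting of the PLCWT presented in equation~(\ref{13}); the task is to reinterpret that rewriting as a composition with the ordinary PWT rather than as a raw integral.

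First, I would start from the third line of (\ref{10}), namely $W^{M}_{f}(\boldsymbol{b},a,\theta)=\int_{\mathbb{R}^{2}}f(\boldsymbol{t})\,\overline{\psi^{M}_{\boldsymbol{b},a,\theta}(\boldsymbol{t})}\,{\rm d}\boldsymbol{t}$, and substitute the explicit form (\ref{11}) of the kernel. Upon conjugation, the chirp factor becomes $e^{j\frac{A}{2B}(\boldsymbol{t}^{2}-\boldsymbol{b}^{2})}$, so that
\begin{equation*}
W^{M}_{f}(\boldsymbol{b},a,\theta)=e^{-j\frac{A}{2B}\boldsymbol{b}^{2}}\int_{\mathbb{R}^{2}}\bigl(f(\boldsymbol{t})\,e^{j\frac{A}{2B}\boldsymbol{t}^{2}}\bigr)\,\overline{\psi_{\boldsymbol{b},a,\theta}(\boldsymbol{t})}\,{\rm d}\boldsymbol{t},
\end{equation*}
since $e^{-j\frac{A}{2B}\boldsymbol{b}^{2}}$ is independent of $\boldsymbol{t}$ and may be pulled out of the integral.

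Second, I would identify the bracketed product inside the integral as the auxiliary function $\widetilde{f}(\boldsymbol{t})=f(\boldsymbol{t})\,e^{j\frac{A}{2B}\boldsymbol{t}^{2}}$, so that the remaining integral is of the form $\int_{\mathbb{R}^{2}}\widetilde{f}(\boldsymbol{t})\,\overline{\psi_{\boldsymbol{b},a,\theta}(\boldsymbol{t})}\,{\rm d}\boldsymbol{t}$. Inserting the normalization $1/a$ from (\ref{1}) and comparing with the PWT definition in (\ref{4})--(\ref{5}) shows that this integral is precisely $W_{\widetilde{f}}(\boldsymbol{b},a,\theta)$, yielding the claimed identity.

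There is essentially no serious obstacle in this argument, since the whole content is a bookkeeping step about chirp factors: one simply needs to keep track of the signs of the quadratic phases after conjugation and confirm that the $\boldsymbol{b}^{2}$ chirp is the only piece that does not fit inside an ordinary PWT. The one subtlety worth noting is that $\widetilde{f}\in L^{2}(\mathbb{R}^{2})$ whenever $f\in L^{2}(\mathbb{R}^{2})$, because $|e^{j\frac{A}{2B}\boldsymbol{t}^{2}}|=1$, which justifies interpreting $W_{\widetilde{f}}$ as a bona fide PWT and makes the reduction rigorous.
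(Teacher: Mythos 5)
Your proposal is correct and follows essentially the same route as the paper, which simply asserts the result follows directly from the PWT definition (\ref{4}); your computation is exactly the rewriting the paper already records in equation (\ref{13}), with the remaining integral identified as $W_{\widetilde{f}}(\boldsymbol{b},a,\theta)$. The only trivial nit is that the $1/a$ normalization is already built into $\psi_{\boldsymbol{b},a,\theta}$ via (\ref{1}), so it does not need to be inserted separately, but this does not affect the validity of your argument.
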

	\begin{proof}
		The proof can be obtained directly from Formula (\ref{4}).
	\end{proof}
	\begin{lemma}\label{lemma5}
		If $\psi\in L^{2}(\mathbb{R}^{2})$ is polar mother wavelet. Then, for every $f\in L^{2}(\mathbb{R}^{2})$, the LCT of the PLCWT is obtained by
		\begin{align}
			\begin{split}
			 \mathscr{L}^{M}\left\lbrace W^{M}_{f}(\boldsymbol{b},a,\theta)\right\rbrace (\boldsymbol{\xi})=ae^{j\frac{D}{2B}(a R_{-\theta}\boldsymbol{\xi})^{2}}\overline{\mathscr{L}^{M}\left\lbrace e^{-j \frac{A}{2B}(\cdot)^{2}}\psi \right\rbrace (aR_{-\theta}\boldsymbol{\xi})}\mathscr{L}^{M}\left\lbrace f\right\rbrace (\boldsymbol{\xi}). 
			\end{split}
			\label{32}
		\end{align}
	\end{lemma}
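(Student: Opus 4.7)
The plan is to start from Lemma \ref{Lemma3}, which already expresses the PLCWT as a superposition over $\boldsymbol{\xi}$ of chirp-modulated exponentials in $\boldsymbol{b}$, and then apply the LCT operator in the variable $\boldsymbol{b}$. This is the most direct route because the $\boldsymbol{b}$-dependence in (\ref{24}) is purely of the form $e^{-j\frac{A}{2B}\boldsymbol{b}^{2}}e^{j\frac{1}{B}\boldsymbol{b}\cdot\boldsymbol{\xi}}$, which is precisely what the LCT kernel $h_{M}(\boldsymbol{b},\boldsymbol{\eta})$ in (\ref{7}) is designed to analyze.

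Concretely, I would write
\[
\mathscr{L}^{M}\!\left\{W^{M}_{f}(\cdot,a,\theta)\right\}(\boldsymbol{\eta})=\int_{\mathbb{R}^{2}} W^{M}_{f}(\boldsymbol{b},a,\theta)\, h_{M}(\boldsymbol{b},\boldsymbol{\eta})\,{\rm d}\boldsymbol{b},
\]
substitute the right-hand side of (\ref{24}) for $W^{M}_{f}$, and swap the order of integration. The chirp $e^{-j\frac{A}{2B}\boldsymbol{b}^{2}}$ coming from Lemma \ref{Lemma3} cancels identically against the $e^{j\frac{A}{2B}\boldsymbol{b}^{2}}$ factor sitting inside $h_{M}$, so the inner $\boldsymbol{b}$-integral collapses to a Fourier-type integral $\int_{\mathbb{R}^{2}} e^{j\frac{1}{B}\boldsymbol{b}\cdot(\boldsymbol{\xi}-\boldsymbol{\eta})}\,{\rm d}\boldsymbol{b}$ that produces a two-dimensional Dirac mass at $\boldsymbol{\xi}=\boldsymbol{\eta}$.

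After evaluating the delta, one is left with a single integral over $\boldsymbol{\xi}$ that picks up only the value at $\boldsymbol{\xi}=\boldsymbol{\eta}$. At this point the two remaining $D/(2B)$-chirps cancel cleanly: the $e^{-j\frac{D}{2B}\boldsymbol{\xi}^{2}}$ inherited from (\ref{24}) evaluated at $\boldsymbol{\xi}=\boldsymbol{\eta}$ kills the $e^{j\frac{D}{2B}\boldsymbol{\eta}^{2}}$ contributed by $h_{M}$, while the $e^{j\frac{D}{2B}(aR_{-\theta}\boldsymbol{\xi})^{2}}$ survives as $e^{j\frac{D}{2B}(aR_{-\theta}\boldsymbol{\eta})^{2}}$. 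Together with the overall prefactor $a$, the surviving term $\overline{\mathscr{L}^{M}\{e^{-j\frac{A}{2B}(\cdot)^{2}}\psi\}(aR_{-\theta}\boldsymbol{\eta})}$ and the factor $\mathscr{L}^{M}\{f\}(\boldsymbol{\eta})$, this reproduces the right-hand side of (\ref{32}).

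The main obstacle I anticipate is the bookkeeping of normalization constants: the $\frac{1}{2\pi B}$ from $h_{M}$ must combine exactly with the $(2\pi|B|)^{2}$ that comes out of the delta identity $\int e^{j\frac{1}{B}\boldsymbol{b}\cdot\boldsymbol{\gamma}}{\rm d}\boldsymbol{b}=(2\pi|B|)^{2}\delta(\boldsymbol{\gamma})$, and the rotation change of variable $aR_{-\theta}\boldsymbol{\xi}\mapsto aR_{-\theta}\boldsymbol{\eta}$ must be tracked carefully. A cleaner back-up route, if the constant tracking is messy, is to invoke Lemma \ref{lemma4} and write $W^{M}_{f}(\boldsymbol{b},a,\theta)=e^{-j\frac{A}{2B}\boldsymbol{b}^{2}}W_{\widetilde{f}}(\boldsymbol{b},a,\theta)$; the $A/(2B)$ chirp then absorbs the matching chirp in $h_{M}$, reducing the LCT to a plain Fourier transform of $W_{\widetilde{f}}$ in $\boldsymbol{b}$, which factors by the classical Fourier convolution theorem into $\mathscr{F}\{\widetilde{f}\}\cdot\mathscr{F}\{a^{-1}\overline{\psi(R_{-\theta}(-\cdot/a))}\}$; rewriting $\mathscr{F}\{\widetilde{f}\}$ in terms of $\mathscr{L}^{M}\{f\}$ via the standard chirp-to-LCT relation recovers (\ref{32}).
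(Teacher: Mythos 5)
Your route is valid but genuinely different from the paper's. The paper proves Lemma \ref{lemma5} by brute force in the space domain: it substitutes the integral definition (\ref{13}) of $W^{M}_{f}$ into the LCT integral over $\boldsymbol{b}$, exchanges the order of integration, performs the shift $\boldsymbol{w}=\boldsymbol{t}-\boldsymbol{b}$ followed by the rescaling $\boldsymbol{z}=R_{-\theta}\boldsymbol{w}/a$, and factors the resulting double integral into $\mathscr{L}^{M}\{f\}(\boldsymbol{\xi})$ times the transform of the chirped wavelet --- essentially redoing the convolution theorem by hand. You instead start from the frequency-domain representation of Lemma \ref{Lemma3} and let the orthogonality of the chirped exponentials (the Dirac mass from $\int e^{j\frac{1}{B}\boldsymbol{b}\cdot(\boldsymbol{\xi}-\boldsymbol{\eta})}{\rm d}\boldsymbol{b}$) do the work; your backup via Lemma \ref{lemma4} and the classical Fourier convolution theorem is closer in spirit to what the paper actually computes. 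There is no circularity in using Lemma \ref{Lemma3}, since it is established independently of Lemma \ref{lemma5}, and your approach buys a shorter calculation at the price of a distributional identity that is only formal for $L^{2}$ data --- though the paper's own factorization of a double integral into a product is no more rigorous. One point you correctly anticipated deserves emphasis: with the paper's normalization $\frac{1}{2\pi B}$ of the two-dimensional kernel (\ref{7}), your delta identity contributes $(2\pi|B|)^{2}$ against a single $\frac{1}{2\pi B}$, leaving a surplus factor of $2\pi B$ relative to the stated (\ref{32}); but if you carry the paper's own space-domain computation through with the same kernel normalization you obtain exactly the same surplus, so this is a pre-existing inconsistency in the paper's constants rather than a defect of your argument. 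Apart from that shared bookkeeping issue, your proposal reproduces the correct structure of the right-hand side of (\ref{32}).
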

	\begin{proof}
		According to the definition of the LCT, we obtain 
		\begin{align}
			\begin{split}
				&\quad \,\mathscr{L}^{M}\left\lbrace W^{M}_{f}(\boldsymbol{b},a,\theta)\right\rbrace (\boldsymbol{\xi})\\
				&=\frac{1}{2\pi B}\int_{\mathbb{R}^{2}}W^{M}_{f}(\boldsymbol{b},a,\theta)e^{j\left( \frac{A}{2B}\boldsymbol{b}^{2}-\frac{1}{B}\boldsymbol{b}\cdot\boldsymbol{\xi}+\frac{D}{2B}\boldsymbol{\xi}^{2}\right) }{\rm{d}}\boldsymbol{b}\\
				&=\frac{1}{2\pi B}\!\int_{\mathbb{R}^{2}}\!\left\lbrace \frac{1}{a}\int_{\mathbb{R}^{2}}\!f(\boldsymbol{t})\overline{\psi\left(\frac{R_{-\theta}\left(\boldsymbol{t}-\boldsymbol{b} \right) }{a} \right)}e^{j\frac{A}{2B}(\boldsymbol{t}^{2}-\boldsymbol{b}^{2})}{\rm{d}}\boldsymbol{t}\right\rbrace\\
				&\quad \times e^{j\left( \frac{A}{2B}\boldsymbol{b}^{2}-\frac{1}{B}\boldsymbol{b}\cdot\boldsymbol{\xi}+\frac{D}{2B}\boldsymbol{\xi}^{2}\right) }{\rm{d}}\boldsymbol{b}\\
				&=\!\frac{1}{2\pi aB}\!\!\int_{\mathbb{R}^{2}}\!\! \int_{\mathbb{R}^{2}}\!\!f(\boldsymbol{t})\overline{\psi\left(\frac{R_{-\theta}\left(\boldsymbol{t}\!-\!\boldsymbol{b}\right) }{a} \right)}\!e^{j\left( \frac{A}{2B}\boldsymbol{t}^{2}-\frac{1}{B}\boldsymbol{b}\cdot\boldsymbol{\xi}+\frac{D}{2B}\boldsymbol{\xi}^{2}\right) }{\rm{d}}\boldsymbol{b}{\rm{d}}\boldsymbol{t}\\
				&=\frac{1}{2\pi B}\!\int_{\mathbb{R}^{2}}f(\boldsymbol{w}+\boldsymbol{b})e^{j\left[  \frac{A}{2B}(\boldsymbol{w}+\boldsymbol{b})^{2}-\frac{1}{B}(\boldsymbol{w}+\boldsymbol{b})\cdot\boldsymbol{\xi}+\frac{D}{2B}\boldsymbol{\xi}^{2}\right] }{\rm{d}}(\boldsymbol{w}+\boldsymbol{b})\\
				&\quad\times \frac{1}{a}\int_{\mathbb{R}^{2}}\overline{\psi\left(\frac{R_{-\theta}\left(\boldsymbol{t}-\boldsymbol{b} \right) }{a} \right)}e^{j\frac{1}{B}\boldsymbol{w}\cdot\boldsymbol{\xi}}{\rm{d}}\boldsymbol{w}\\
				&=ae^{j\frac{D}{2B}(a R_{-\theta}\boldsymbol{\xi})^{2}}\overline{\mathscr{L}^{M}\left\lbrace e^{-j \frac{A}{2B}(\cdot)^{2}}\psi \right\rbrace (aR_{-\theta}\boldsymbol{\xi})}\mathscr{L}^{M}\left\lbrace f\right\rbrace (\boldsymbol{\xi}).
			\end{split}
			\label{33}
		\end{align}
		The proof is completed. 
	\end{proof}
	\begin{theorem}[Inversion Formula]\label{theorem1}
		If $\psi\in L^{2}(\mathbb{R}^{2})$ is polar mother wavelet, $f, g\in L^{2}(\mathbb{R}^{2})$. Then $f$ can be reconstructed by 
		\begin{align}
			\begin{split}
				f(\boldsymbol{t})=\frac{1}{C_{\psi,M}}\int_{0}^{2\pi}\!\int_{0}^{+\infty}\!\int_{\mathbb{R}^{2}}W^{M}_{f}(\boldsymbol{b},a,\theta)\psi^{M}_{\boldsymbol{b},a,\theta}(\boldsymbol{t})\frac{{\rm{d}}\boldsymbol{b}{\rm{d}}a{\rm{d}}\theta}{a^{3}},
			\end{split}
			\label{34}
		\end{align}
		where $C_{\psi,M}$ is the admissibility condition give by (\ref{12}).
	\end{theorem}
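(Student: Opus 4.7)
The plan is to derive the inversion formula as a weak identity from the orthogonality relation in Property \ref{property4}, using the standard trick of pairing both sides against an arbitrary test function $g\in L^{2}(\mathbb{R}^{2})$. First, I would start from the orthogonality identity
\begin{align*}
\int_{0}^{2\pi}\!\int_{0}^{+\infty}\!\int_{\mathbb{R}^{2}}W^{M}_{f}(\boldsymbol{b},a,\theta)\overline{W^{M}_{g}(\boldsymbol{b},a,\theta)}\frac{{\rm d}\boldsymbol{b}{\rm d}a{\rm d}\theta}{a^{3}}
=C_{\psi,M}\,\langle f,g\rangle_{L^{2}(\mathbb{R}^{2})},
\end{align*}
and rewrite the conjugated factor using the inner-product form from Definition \ref{3}, namely $\overline{W^{M}_{g}(\boldsymbol{b},a,\theta)}=\int_{\mathbb{R}^{2}}\psi^{M}_{\boldsymbol{b},a,\theta}(\boldsymbol{t})\,\overline{g(\boldsymbol{t})}\,{\rm d}\boldsymbol{t}$.

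Next, I would apply Fubini's theorem to interchange the $\boldsymbol{t}$-integration with the $(\boldsymbol{b},a,\theta)$-integration, producing
\begin{align*}
\int_{\mathbb{R}^{2}}\overline{g(\boldsymbol{t})}\left[\int_{0}^{2\pi}\!\int_{0}^{+\infty}\!\int_{\mathbb{R}^{2}}W^{M}_{f}(\boldsymbol{b},a,\theta)\,\psi^{M}_{\boldsymbol{b},a,\theta}(\boldsymbol{t})\,\frac{{\rm d}\boldsymbol{b}{\rm d}a{\rm d}\theta}{a^{3}}\right]{\rm d}\boldsymbol{t}=C_{\psi,M}\int_{\mathbb{R}^{2}}f(\boldsymbol{t})\,\overline{g(\boldsymbol{t})}\,{\rm d}\boldsymbol{t}.
\end{align*}
Since $g\in L^{2}(\mathbb{R}^{2})$ is arbitrary, the bracketed function must agree with $C_{\psi,M} f(\boldsymbol{t})$ almost everywhere, which, after dividing by $C_{\psi,M}$ (finite and nonzero by the admissibility condition (\ref{12})), yields exactly the claimed reconstruction formula.

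The main obstacle will be the rigorous justification of the Fubini interchange, since the quadruple integral is not obviously absolutely convergent in $(\boldsymbol{t},\boldsymbol{b},a,\theta)$. I would handle this in the usual way: first establish the identity on a dense subspace such as the Schwartz class $\mathcal{S}(\mathbb{R}^{2})$, where decay in every variable makes Fubini immediate, and then extend by continuity using the energy-preserving Property \ref{property6}, which shows that the map $f\mapsto W^{M}_{f}$ is (up to the constant $C_{\psi,M}$) an isometry from $L^{2}(\mathbb{R}^{2})$ into $L^{2}\bigl(\mathbb{R}^{2}\times\mathbb{R}^{+}\times[0,2\pi],\,{\rm d}\boldsymbol{b}\,{\rm d}a\,{\rm d}\theta/a^{3}\bigr)$. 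The remaining steps, namely the use of Lemma \ref{Lemma3} or Lemma \ref{lemma5} if one instead prefers a Fourier-side derivation, are routine once this convergence issue is settled.
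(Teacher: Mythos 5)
Your proposal is correct, but it takes a genuinely different route from the paper. The paper proves the inversion formula by a direct computation on the LCT side: it first computes $\mathscr{L}^{M}\{\overline{\psi^{M}_{\boldsymbol{b},a,\theta}}\}(\boldsymbol{\xi})$ explicitly (treating $\boldsymbol{b}$ as the integration variable), combines this with Lemma \ref{lemma5} (which gives $\mathscr{L}^{M}\{W^{M}_{f}(\cdot,a,\theta)\}(\boldsymbol{\xi})$ as a product of $\mathscr{L}^{M}\{f\}(\boldsymbol{\xi})$ with the transformed wavelet), applies a Parseval-type identity to convert the $\boldsymbol{b}$-integral into a $\boldsymbol{\xi}$-integral, factors out the admissibility constant $C_{\psi,M}$, and recognizes the remaining $\boldsymbol{\xi}$-integral as the inverse LCT of $\mathscr{L}^{M}\{f\}$. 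You instead derive the formula in the weak sense by pairing against an arbitrary $g\in L^{2}(\mathbb{R}^{2})$ in the orthogonality relation of Property \ref{property4} and invoking the nondegeneracy of the $L^{2}$ inner product. Your route is shorter, reuses a result already established in the paper, and gives the inversion in the weak ($L^{2}$) sense, which is arguably the natural formulation since the repeated integral in (\ref{34}) need not converge absolutely pointwise; the paper's route is more computational but exhibits the pointwise mechanism of reconstruction and makes visible exactly where the admissibility constant arises. Your handling of the Fubini obstruction (prove on a dense class, extend via the isometry of Property \ref{property6}) is the standard and honest fix; note that the paper's own proof interchanges integrals with the same informality, so you are not losing rigor relative to it. One small caveat: admissibility (\ref{12}) only guarantees $C_{\psi,M}<\infty$; to divide by it you also need $C_{\psi,M}\neq 0$, which holds for any nonzero admissible $\psi$ but is worth stating.
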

	\begin{proof}
		According the LCT, we get 
		\begin{align}
			\begin{split}
				\mathscr{L}^{M}\left\lbrace\overline{\psi_{\boldsymbol{b},a,\theta}^{M}} \right\rbrace(\boldsymbol{\xi})
				&=\frac{1}{2\pi aB}\int_{\!\mathbb{R}^{2}}\!\!\!e^{j\frac{A}{2B}(\boldsymbol{t}^{2}\!-\!\boldsymbol{b}^{2})}\overline{\psi\left(\frac{R_{-\theta}\left(\boldsymbol{t}-\boldsymbol{b} \right) }{a}\! \right)}e^{j\left( \frac{A}{2B}\boldsymbol{b}^{2}\!-\!\frac{1}{B}\boldsymbol{b}\cdot\boldsymbol{\xi}+\frac{D}{2B}\boldsymbol{\xi}^{2}\right) }{\rm{d}}\boldsymbol{b}\\
				&=\frac{1}{2\pi aB}\int_{\mathbb{R}^{2}}\overline{\psi\left(\frac{R_{-\theta}\left(\boldsymbol{t}-\boldsymbol{b} \right) }{a} \right)}e^{j\left[  \frac{A}{2B}\boldsymbol{t}^{2}-\frac{1}{B}\boldsymbol{b}\cdot\boldsymbol{\xi}+\frac{D}{2B}\boldsymbol{\xi}^{2}\right]}{\rm{d}}\boldsymbol{b}.
			\end{split}
			\label{35}
		\end{align}
		Let $\boldsymbol{\eta}=\boldsymbol{t}-\boldsymbol{b}$, (\ref{35}) can be written as
		\begin{align}
			\begin{split}
				&\quad \mathscr{L}^{M}\left\lbrace\overline{\psi_{\boldsymbol{b},a,\theta}^{M}} \right\rbrace(\boldsymbol{\xi})=\frac{1}{2\pi aB}\int_{\mathbb{R}^{2}}\overline{\psi\left(\frac{R_{-\theta}\boldsymbol{\eta} }{a} \right)}e^{j\left[  \frac{A}{2B}\boldsymbol{t}^{2}-\frac{1}{B}\left(\boldsymbol{t}-\boldsymbol{\eta} \right)\cdot\boldsymbol{\xi}+\frac{D}{2B}\boldsymbol{\xi}^{2}\right]}{\rm{d}}\boldsymbol{\eta}.
			\end{split}
			\label{36}
		\end{align}
		Let $\boldsymbol{z}=R_{-\theta}\boldsymbol{\eta}/a$, we have
		\begin{align}
			\begin{split}
				\mathscr{L}^{M}\left\lbrace\overline{\psi_{\boldsymbol{b},a,\theta}^{M}} \right\rbrace(\boldsymbol{\xi})&=ae^{j\left[  \frac{A}{2B}\boldsymbol{t}^{2}+\frac{D}{2B}\boldsymbol{\xi}^{2}-\frac{1}{B}\boldsymbol{t}\cdot\boldsymbol{\xi}+\frac{D}{2B}(a R_{-\theta}\boldsymbol{\xi})^{2}\right] }\overline{\mathscr{L}^{M}\left\lbrace e^{-j \frac{A}{2B}(\cdot)^{2}}\psi \right\rbrace (aR_{-\theta}\boldsymbol{\xi})}.
			\end{split}
			\label{37}
		\end{align}
		Using Lemma \ref{lemma5}, we obtain
		\begin{align}
			\begin{split}
				&\quad \mathscr{L}^{M}\left\lbrace W^{M}_{f}(\boldsymbol{b},a,\theta)\right\rbrace (\boldsymbol{\xi})=ae^{j\frac{D}{2B}(a R_{-\theta}\boldsymbol{\xi})^{2}}\overline{\mathscr{L}^{M}\left\lbrace e^{-j \frac{A}{2B}(\cdot)^{2}}\psi \right\rbrace (aR_{-\theta}\boldsymbol{\xi})}\mathscr{L}^{M}\left\lbrace f\right\rbrace (\boldsymbol{\xi}). 
			\end{split}
			\label{38}
		\end{align}
		From (\ref{37}), (\ref{38}), and admissibility condition, we have
		\begin{align}
			\begin{split}
				&\quad \,\frac{1}{C_{\psi,M}}\int_{0}^{2\pi}\!\int_{0}^{+\infty}\!\int_{\mathbb{R}^{2}}W^{M}_{f}(\boldsymbol{b},a,\theta)\psi^{M}_{\boldsymbol{b},a,\theta}(\boldsymbol{t})\frac{{\rm{d}}\boldsymbol{b}{\rm{d}}a{\rm{d}}\theta}{a^{3}}\\
				&=\!\frac{1}{C_{\psi,M}}\!\!\int_{0}^{2\pi}\!\!\int_{0}^{+\infty}\!\!\left\lbrace \int_{\mathbb{R}^{2}}e^{j\left[  -\frac{A}{2B}\boldsymbol{t}^{2}-\frac{D}{2B}\boldsymbol{\xi}^{2}+\frac{1}{B}\boldsymbol{t}\cdot\boldsymbol{\xi}-\frac{D}{2B}(R_{-\theta}\boldsymbol{\xi})^{2}\right] }\right.\\  
				&\left.\quad\times \mathscr{L}^{M}\left\lbrace e^{-j \frac{A}{2B}(\cdot)^{2}}\psi \right\rbrace (aR_{-\theta}\boldsymbol{\xi}) a^{2}e^{j\frac{D}{2B}(a R_{-\theta}\boldsymbol{\xi})^{2}}\right.\\
				&\left.\quad\times\overline{\mathscr{L}^{M}\left\lbrace e^{-j \frac{A}{2B}(\cdot)^{2}}\psi \right\rbrace (aR_{-\theta}\boldsymbol{\xi})}\mathscr{L}^{M}\left\lbrace f\right\rbrace (\boldsymbol{\xi}){\rm{d}}\boldsymbol{\xi}\right\rbrace\frac{{\rm{d}}a{\rm{d}}\theta}{a^{3}}\\
				&=\frac{1}{C_{\psi,M}}\int_{0}^{\infty}\int_{0}^{2\pi}\bigg|\mathscr{L}^{M}\left\lbrace e^{-j \frac{A}{2B}(\cdot)^{2}}\psi \right\rbrace (aR_{-\theta}\boldsymbol{\xi})\bigg|^{2}\frac{{\rm{d}}a\rm{d}\theta}{a}\\
				&\quad\times \int_{\mathbb{R}^{2}}\mathscr{L}^{M}\left\lbrace f\right\rbrace (\boldsymbol{\xi})e^{j\left[  -\frac{A}{2B}\boldsymbol{t}^{2}-\frac{D}{2B}\boldsymbol{\xi}^{2}+\frac{1}{B}\boldsymbol{t}\cdot\boldsymbol{\xi}\right] }{\rm{d}}\boldsymbol{\xi}\\
				&=f(\boldsymbol{t}).
			\end{split}
			\label{39}
		\end{align}
		The proof is completed. 
	\end{proof}
	\section{Convolution and Correlation Theorems}
	\label{Convolution}
	Convolution theorem is the most fundamental theory in linear time-invariant (LIT) systems, so it is essential to study the convolution and correlation theorems of the PLCWT.
	
	\subsection{Convolution and Correlation Operators}
	First some definitions of convolution and correlation operators are given.
	\begin{definition}[Convolution Operators]\label{definition1}
		Let $f$ and $g$ are complex-valued functions defined on $\mathbb{R}^{2}$, 
		\begin{enumerate}
			\item Let $f\in L^{2}(\mathbb{R}^{2})$ and $g\in L^{2}(\mathbb{R}^{2})$, then the convolution operator $\circledast$ is defined as
			\begin{align}
				\begin{split}
					h(\boldsymbol{x})=\left( f\circledast g\right) (\boldsymbol{x})=\int_{\mathbb{R}^{2}}f(\boldsymbol{t})g(\boldsymbol{x}-\boldsymbol{t}){\rm{d}}\boldsymbol{t}. 
				\end{split}
				\label{40}
			\end{align}
			If $g\in L^{1}(\mathbb{R}^{2})$, then the integral $h$ exists almost everywhere, $f\circledast g\in L^{2}(\mathbb{R}^{2})$, and $\Vert f\circledast g\Vert_{L^{2}}=\Vert f \Vert_{L^{2}}\cdot\Vert g \Vert_{L^{1}}$.
			\item 
			Let $f\in L^{1}(\mathbb{R}^{2})$, $g\in L^{1}(\mathbb{R}^{2})\bigcap L^{2}(\mathbb{R}^{2})$,  it is possible to convolve them in just one variable
			as follows:
			\begin{align}
				\begin{split}
					h_{1}(\boldsymbol{y},\lambda,\theta)&=\left( f\circledast_{1} g\right) (\boldsymbol{y},\lambda,\theta)=\int_{\mathbb{R}^{2}}f(\boldsymbol{t},\lambda,\theta)g(\boldsymbol{y}-\boldsymbol{t},\lambda,\theta){\rm{d}}\boldsymbol{t},
				\end{split}
				\label{41}
			\end{align}
			where $\boldsymbol{y}\in \mathbb{R}^{2}$, $a\in{\mathbb{R}^{+}}$, and $\theta\in\left[0,2\pi \right] $.
			\item Let $\psi_{f}\in L^{1}(\mathbb{R}^{2})$ and $\psi_{g}\in L^{1}(\mathbb{R}^{2})\bigcap L^{2}(\mathbb{R}^{2})$ be two admissible polar mother wavelets, their convolution operation $\circledast^{\psi}$ is defined as a new function
			\begin{align}
				\begin{split}
					\psi_{h}(\boldsymbol{t})&=\left( \psi_{f}\circledast_{\psi} \psi_{g}\right) (\boldsymbol{t})=\int_{\mathbb{R}^{2}}\psi_{f}(R_{-\theta}\boldsymbol{y})\psi_{g}\left( R_{-\theta}\left(R_{\theta}\boldsymbol{t}-\boldsymbol{y} \right) \right){\rm{d}}\boldsymbol{y}. 
				\end{split}
				\label{42}
			\end{align}
		\end{enumerate}
	\end{definition} 
	\begin{definition}[Correlation Operators]\label{definition2} 
		Let $f$ and $g$ are complex-valued functions defined on $\mathbb{R}^{2}$, then as follows
		\begin{enumerate}
			\item Let $f\in L^{2}(\mathbb{R}^{2})$, $g\in L^{1}(\mathbb{R}^{2})\bigcap L^{2}(\mathbb{R}^{2})$, the correlation operator $\odot$ is defined as
			\begin{align}
				\begin{split}
					h(\boldsymbol{x})=\left( f\odot g\right) (\boldsymbol{x})=\int_{\mathbb{R}^{2}}f(\boldsymbol{t})g(\boldsymbol{x}+\boldsymbol{t}){\rm{d}}\boldsymbol{t}. 
				\end{split}
				\label{43}
			\end{align}
			When $f=g$, $f\odot g$ is called the autocorrelation of $f$, and $g$ is called the cross-correlation of $f$ and $g$.
			\item 
			Let $f\in L^{1}(\mathbb{R}^{2})$, $g\in L^{1}(\mathbb{R}^{2})\bigcap L^{2}(\mathbb{R}^{2})$, it is possible to correlate them in just one variable
			as follows
			\begin{align}
				\begin{split}
					h_{1}(\boldsymbol{y},\lambda,\theta)&=\left( f\odot_{1} g\right) (\boldsymbol{y},\lambda,\theta)=\int_{\mathbb{R}^{2}}f(\boldsymbol{t},\lambda,\theta)g(\boldsymbol{y}+\boldsymbol{t},\lambda,\theta){\rm{d}}\boldsymbol{t},
				\end{split}
				\label{44}
			\end{align}
			where $\boldsymbol{y}\in \mathbb{R}^{2}$, $a\in{\mathbb{R}^{+}}$, and $\theta\in\left[0,2\pi \right] $.
			\item Let $\psi_{f}\in L^{1}(\mathbb{R}^{2})$ and $\psi_{g}\in L^{1}(\mathbb{R}^{2})\bigcap L^{2}(\mathbb{R}^{2})$ be two admissible polar mother wavelets, their convolution operation $\odot^{\psi}$ is defined as a new function
			\begin{align}
				\begin{split}
					\psi_{h}(\boldsymbol{t})&=\left( \psi_{f}\odot_{\psi} \psi_{g}\right) (\boldsymbol{t})=\int_{\mathbb{R}^{2}}\psi_{f}(R_{-\theta}\boldsymbol{y})\psi_{g}\left( R_{-\theta}\left(R_{\theta}\boldsymbol{t}+\boldsymbol{y} \right) \right){\rm{d}}\boldsymbol{y}. 
				\end{split}
				\label{45}
			\end{align}
		\end{enumerate}
	\end{definition} 
	Since both the convolution and correlation of two valid wavelets satisfy the necessary admissibility conditions, we can use them to analyze convolution and correlation signals. The proofs of the convolution and correlation theorems of the PLCWT is given in the following part.
	
	\subsection{Convolution Theorem}
	\begin{theorem}[Convolution Theorem] \label{theorem2}
		Let $\psi_{f}\in L^{2}(\mathbb{R}^{2})$ and $\psi_{g}\in L^{1}(\mathbb{R}^{2})\bigcap L^{2}(\mathbb{R}^{2})$ be two admissible polar mother wavelets. If $h=\left( f\circledast g\right)$ and $\psi_{h}=\left( \psi_{f}\circledast_{\psi} \psi_{g}\right)$, Then the PLCWT of $h$ is given by
		\begin{align}
			\begin{split}
				W^{M}_{h}(\boldsymbol{b},a,\theta)=\left(W^{M}_{f}\circledast_{1} W^{M}_{g}\right)(\boldsymbol{b},a,\theta),	 
			\end{split}
			\label{46}
		\end{align}
		where $W^{M}_{f}$ and $W^{M}_{g}$ denote the PLCWT of $f\in L^{2}(\mathbb{R}^{2})$ and $g\in L^{1}(\mathbb{R}^{2})\bigcap L^{2}(\mathbb{R}^{2})$, respectively.
		\begin{proof}
			The PLCWT of $h(\boldsymbol{t})$, given by (\ref{10}), we get
			\begin{align}
				\begin{split}
					W^{M}_{h}(\boldsymbol{b},a,\theta)=\frac{1}{a}\!\int_{\mathbb{R}^{2}}\!h(\boldsymbol{x})\overline{\psi_{h}\left(\frac{R_{-\theta}\left(\boldsymbol{x}\!-\!\boldsymbol{b} \right) }{a} \right)}e^{j\frac{A}{2B}(\boldsymbol{x}^{2}-\boldsymbol{b}^{2})}{\rm{d}}\boldsymbol{x}.
				\end{split}
				\label{47}
			\end{align}
			According to the definition of the convolution operators in Definition \ref{definition1}, we have
			\begin{align}
				\begin{split}
					h(\boldsymbol{x})=\left( f\circledast g\right) (\boldsymbol{x})=\int_{\mathbb{R}^{2}}f(\boldsymbol{t})g(\boldsymbol{x}-\boldsymbol{t}){\rm{d}}\boldsymbol{t},
				\end{split}
				\label{48}
			\end{align}
			\begin{align}
				\begin{split}
					\psi_{h}\!\!\left( \frac{R_{-\theta}\left(\boldsymbol{x}\!-\!\boldsymbol{b} \right) }{a}\right)\!\!=\!\int_{\mathbb{R}^{2}}\!\!\psi_{f}(R_{-\theta}\boldsymbol{y})\psi_{g}\left(\! \frac{\!R_{-\theta}\left(\boldsymbol{x}\!-\!\boldsymbol{b}\!-\!a\boldsymbol{y} \!\right)}{a} \right){\rm{d}}\boldsymbol{y}. 
				\end{split}
				\label{49}
			\end{align}
			Substituting (\ref{48}) and (\ref{49}) into (\ref{47}), (\ref{47}) can be written as
			\begin{align}
				\begin{split}
					W^{M}_{h}(\boldsymbol{b},a,\theta)&=\frac{1}{a}\int_{\mathbb{R}^{2}}h(\boldsymbol{x})\overline{\psi_{h}\left(\frac{R_{-\theta}\left(\boldsymbol{x}-\boldsymbol{b} \right) }{a} \right)}e^{j\frac{A}{2B}(\boldsymbol{x}^{2}-\boldsymbol{b}^{2})}{\rm{d}}\boldsymbol{x}\\
					&=\frac{1}{a}e^{j\frac{A}{2B}(\boldsymbol{x}^{2}-\boldsymbol{b}^{2})}\int_{\mathbb{R}^{2}}\int_{\mathbb{R}^{2}}\int_{\mathbb{R}^{2}}f(\boldsymbol{t})g(\boldsymbol{x}-\boldsymbol{t})\psi_{f}(R_{-\theta}\boldsymbol{y})\\
					&\quad\times\overline{\psi_{g}\left( \frac{R_{-\theta}\left(\boldsymbol{x}-\boldsymbol{b}-a\boldsymbol{y} \right)}{a} \right)}{\rm{d}}\boldsymbol{t}{\rm{d}}\boldsymbol{x}{\rm{d}}\boldsymbol{y}.
				\end{split}
				\label{50}
			\end{align}
			Performing the change of the variables $\boldsymbol{m}=\boldsymbol{x}-\boldsymbol{t}$ and $\boldsymbol{n}=\boldsymbol{b}-a\boldsymbol{y}-\boldsymbol{t}$ satisfy $2\boldsymbol{n}^{2}-2\boldsymbol{b}\cdot\boldsymbol{n}+2\boldsymbol{t}\cdot\boldsymbol{m}=0$, we obtain
			\begin{align}
				\begin{split}
					W^{M}_{h}(\boldsymbol{b},a,\theta)
					&=\frac{1}{a^{2}}\,e^{\,j\frac{A}{2B}\left[ \boldsymbol{t}^{2}-\left(\boldsymbol{b}-\boldsymbol{n} \right)^{2}+\boldsymbol{m}^{2}-\boldsymbol{n}^{2}\right] }\int_{\mathbb{R}^{2}}\,\int_{\mathbb{R}^{2}}\,\int_{\mathbb{R}^{2}}f\left( \boldsymbol{t} \right) g(\boldsymbol{m})\\
					&\quad\times\overline{\psi_{f}\left( \frac{R_{\!-\theta}\left(\boldsymbol{t}\!-\!\left( \boldsymbol{b}\!-\!\boldsymbol{n}\right)\right)}{a}\right)}\overline{\psi_{g}\left( \frac{R_{-\theta}\left(\boldsymbol{m}-\boldsymbol{n}\right)}{a} \right)}{\rm{d}}\boldsymbol{t}{\rm{d}}\boldsymbol{n}{\rm{d}}\boldsymbol{m}\\
					&=\int_{\mathbb{R}^{2}}\frac{1}{a}\int_{\mathbb{R}^{2}}f\left( \boldsymbol{t} \right)\overline{\psi_{f}\left( \frac{R_{\!-\theta}\left(\boldsymbol{t}\!-\!\left( \boldsymbol{b}\!-\!\boldsymbol{n}\right)\right)}{a}\right)}e^{j\frac{A}{2B}\left[ \boldsymbol{t}^{2}-\left(\boldsymbol{b}-\boldsymbol{n} \right)^{2}\right] }{\rm{d}}\boldsymbol{t}\\
					&\quad\times\frac{1}{a}\int_{\mathbb{R}^{2}}g(\boldsymbol{m})\overline{\psi_{g}\left( \frac{R_{-\theta}\left(\boldsymbol{m}-\boldsymbol{n}\right)}{a} \right)}{\rm{d}}\boldsymbol{m}{\rm{d}}\boldsymbol{n}\\
					&=\int_{\mathbb{R}^{2}}W^{M}_{f}(\boldsymbol{b}-\boldsymbol{n},a,\theta)W^{M}_{g}(\boldsymbol{n},a,\theta){\rm{d}}\boldsymbol{n}\\
					&=\left(W^{M}_{f}\circledast_{1} W^{M}_{g}\right)(\boldsymbol{b},a,\theta).
				\end{split}
				\label{51}
			\end{align}
			The proof is completed. 
		\end{proof}
	\end{theorem}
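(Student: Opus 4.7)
The plan is to start from the integral representation of $W^M_h(\boldsymbol{b},a,\theta)$ supplied by Definition~\ref{3}, substitute both convolution expansions $h=f\circledast g$ (Definition~\ref{definition1}, item~1) and $\psi_h=\psi_f\circledast_\psi\psi_g$ (Definition~\ref{definition1}, item~3), and then reorganise the resulting triple integral so that the $f$- and $\psi_f$-dependent factors detach from the $g$- and $\psi_g$-dependent factors. Once that decoupling is achieved, the two inner double integrals should read exactly as $W^M_f(\boldsymbol{b}-\boldsymbol{n},a,\theta)$ and $W^M_g(\boldsymbol{n},a,\theta)$, leaving an outer integration over $\boldsymbol{n}\in\mathbb{R}^2$ that matches the one-variable convolution $\circledast_1$ from~(\ref{41}).

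To perform the reorganisation I would introduce two changes of variables in sequence: first $\boldsymbol{m}=\boldsymbol{x}-\boldsymbol{t}$ to convert the $g$-convolution into a free integration in $\boldsymbol{m}$, and then $\boldsymbol{n}=\boldsymbol{b}-a\boldsymbol{y}-\boldsymbol{t}$ to replace the integration over the rotation variable $\boldsymbol{y}$ by an integration over the lag $\boldsymbol{n}$. Under these substitutions the two $\psi$-arguments become $R_{-\theta}(\boldsymbol{t}-(\boldsymbol{b}-\boldsymbol{n}))/a$ and $R_{-\theta}(\boldsymbol{m}-\boldsymbol{n})/a$, which is precisely the shape needed to assemble PLCWT kernels centred at $\boldsymbol{b}-\boldsymbol{n}$ and $\boldsymbol{n}$. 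The Jacobian of the pair of substitutions produces the $1/a^{2}$ factor that, together with the prefactor $1/a$ in~(\ref{10}), gives the correct $1/a$ weighting for each of the two PLCWT factors.

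The delicate step is the chirp $e^{j\frac{A}{2B}(\boldsymbol{x}^2-\boldsymbol{b}^2)}$. Expanding $\boldsymbol{x}^2=(\boldsymbol{t}+\boldsymbol{m})^2$ and $\boldsymbol{b}^2=((\boldsymbol{b}-\boldsymbol{n})+\boldsymbol{n})^2$, and matching the result against the desired factorisation
\[
e^{j\frac{A}{2B}\left[\boldsymbol{t}^2-(\boldsymbol{b}-\boldsymbol{n})^2\right]}\cdot e^{j\frac{A}{2B}(\boldsymbol{m}^2-\boldsymbol{n}^2)},
\]
produces an obstruction of the form $2\boldsymbol{t}\cdot\boldsymbol{m}-2\boldsymbol{b}\cdot\boldsymbol{n}+2\boldsymbol{n}^2$ that must be absorbed by a compatibility relation linking $(\boldsymbol{m},\boldsymbol{n})$ to $(\boldsymbol{t},\boldsymbol{y})$ through the change of variables. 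Once this cross-term is accounted for, the chirp splits multiplicatively, the integrand factors as (PLCWT kernel for $f$ at $\boldsymbol{b}-\boldsymbol{n}$) times (PLCWT kernel for $g$ at $\boldsymbol{n}$), and Fubini lets me perform the $\boldsymbol{t}$- and $\boldsymbol{m}$-integrations independently to recover $W^M_f(\boldsymbol{b}-\boldsymbol{n},a,\theta)$ and $W^M_g(\boldsymbol{n},a,\theta)$, respectively.

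The main obstacle is exactly this chirp bookkeeping: neither $\boldsymbol{x}^2$ nor $\boldsymbol{b}^2$ factorises additively along the convolution variables, so all the LCT-specific difficulty is concentrated in aligning the quadratic exponents with the substitution $(\boldsymbol{x},\boldsymbol{y})\mapsto(\boldsymbol{m},\boldsymbol{n})$. Once the splitting identity is in place, the remainder of the argument is the standard wavelet convolution mechanism applied twice. If this direct route turns out to be fragile, a natural backup is to pass to the LCT side via Lemmas~\ref{Lemma2}--\ref{Lemma3}, use the LCT convolution identity~(\ref{9}) to turn products of LCT spectra into linear-canonical convolutions, and then invert back, which would circumvent the explicit manipulation of the chirp in the spatial domain.
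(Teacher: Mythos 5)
Your proposal follows essentially the same route as the paper's own proof: the same substitution of both convolution expansions, the same change of variables $\boldsymbol{m}=\boldsymbol{x}-\boldsymbol{t}$, $\boldsymbol{n}=\boldsymbol{b}-a\boldsymbol{y}-\boldsymbol{t}$, and the same factorisation into $W^{M}_{f}(\boldsymbol{b}-\boldsymbol{n},a,\theta)$ and $W^{M}_{g}(\boldsymbol{n},a,\theta)$. The chirp cross-term $2\boldsymbol{t}\cdot\boldsymbol{m}-2\boldsymbol{b}\cdot\boldsymbol{n}+2\boldsymbol{n}^{2}$ you flag as the delicate point is exactly the quantity the paper disposes of by asserting that the new variables ``satisfy $2\boldsymbol{n}^{2}-2\boldsymbol{b}\cdot\boldsymbol{n}+2\boldsymbol{t}\cdot\boldsymbol{m}=0$,'' so your treatment is no less (and no more) rigorous than the published one.
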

	\begin{corollary}\label{corollary2} 
		It can be observed that for $M=\left [ \begin{matrix}
			0& 1 \\
			-1& 0 \\
		\end{matrix} \right ] $ in Theorem \ref{theorem2}, we have
		\begin{align}
			\begin{split}
				W_{h}(\boldsymbol{b},a,\theta)=\left(W_{f}\circledast_{1} W_{g}\right)(\boldsymbol{b},a,\theta),	 
			\end{split}
			\label{52}
		\end{align}
		where $W_{f}$ and $W_{g}$ denote the PWT of the functions $f$ and $g$ with wavelets $\psi_{f}$ and $\psi_{g}$ respectively.
	\end{corollary}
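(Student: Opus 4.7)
The plan is to derive Corollary~\ref{corollary2} as an immediate specialization of Theorem~\ref{theorem2}, exploiting the fact noted at the end of Definition~\ref{3} that the PLCWT degenerates to the PWT when $M = (0,1;-1,0)$. No new calculation should really be needed beyond checking that each object appearing in Theorem~\ref{theorem2} collapses correctly under this choice of parameters.

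First I would set $A=0$, $B=1$, $C=-1$, $D=0$, and observe that the chirp factor $e^{j\frac{A}{2B}(\boldsymbol{t}^{2}-\boldsymbol{b}^{2})}$ appearing in the kernel (\ref{11}) reduces to $1$. Consequently $\psi^{M}_{\boldsymbol{b},a,\theta}(\boldsymbol{t}) = \psi_{\boldsymbol{b},a,\theta}(\boldsymbol{t})$, and the PLCWT integral in (\ref{10}) becomes precisely the PWT integral from Definition~\ref{1}; that is, $W^{M}_{f}(\boldsymbol{b},a,\theta) = W_{f}(\boldsymbol{b},a,\theta)$ for every admissible mother wavelet and every $f\in L^{2}(\mathbb{R}^{2})$. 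The admissibility constant $C_{\psi,M}$ likewise specializes to the classical $C_{\psi}$, but it is not even needed in the statement of the corollary.

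Second, I would invoke Theorem~\ref{theorem2} directly: under the hypotheses on $\psi_{f}$, $\psi_{g}$, $f$ and $g$ it gives
\begin{equation*}
W^{M}_{h}(\boldsymbol{b},a,\theta) = \left(W^{M}_{f}\circledast_{1} W^{M}_{g}\right)(\boldsymbol{b},a,\theta).
\end{equation*}
Substituting the specialization $W^{M} = W$ obtained in the first step on both sides yields the stated identity. Since the partial convolution operator $\circledast_{1}$ acts only in the translation variable $\boldsymbol{b}$ and does not depend on $M$, its form is unchanged by the substitution.

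I do not expect any genuine obstacle: the corollary is a sanity check that Theorem~\ref{theorem2} contains the classical PWT convolution theorem as a limiting case, and the whole argument is a single parameter substitution together with the remark already embedded in Definition~\ref{3}. The only point that deserves explicit mention is that the hypotheses of Theorem~\ref{theorem2} ($\psi_{f}\in L^{2}$ and $\psi_{g}\in L^{1}\cap L^{2}$, together with admissibility) are preserved under this specialization, so Theorem~\ref{theorem2} may indeed be applied without modification.
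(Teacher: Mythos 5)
Your proposal is correct and matches the paper's (implicit) reasoning: the paper offers no separate proof for this corollary, treating it exactly as you do — as an immediate specialization of Theorem \ref{theorem2} once one notes that $A=0$ kills the chirp factor in (\ref{11}) so that $W^{M}_{f}=W_{f}$, and that $\circledast_{1}$ is independent of $M$. Nothing further is needed.
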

	\begin{corollary}\label{corollary3} 
		It is interesting to note that by taking $\theta=0$ in Corollary \ref{corollary2}, it degenerates into the classical convolution theorem of the WT \cite{ref41}.
	\end{corollary}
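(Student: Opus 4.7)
The plan is to verify that every ingredient of Corollary \ref{corollary2} collapses to its classical counterpart when $\theta=0$, so that the identity (\ref{52}) literally becomes the standard convolution theorem for the two-dimensional WT. The key observation is that $R_{-\theta}$ in (\ref{2}) reduces to the $2\times 2$ identity matrix when $\theta=0$, so the rotational parameter disappears from all three objects appearing in (\ref{52}): the wavelet family, the wavelet convolution, and the partial convolution $\circledast_{1}$.

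First I would substitute $\theta=0$ into the polar wavelet (\ref{1}), obtaining $\psi_{\boldsymbol{b},a}(\boldsymbol{t})=a^{-1}\psi((\boldsymbol{t}-\boldsymbol{b})/a)$, and correspondingly reduce the PWT in (\ref{4}) to the classical two-dimensional wavelet transform $W_{f}(\boldsymbol{b},a)=a^{-1}\int_{\mathbb{R}^{2}} f(\boldsymbol{t})\overline{\psi((\boldsymbol{t}-\boldsymbol{b})/a)}\,\mathrm{d}\boldsymbol{t}$. Next I would set $\theta=0$ in the wavelet convolution operator (\ref{42}); since both $R_{-\theta}$ and $R_{\theta}$ become the identity, one has $\psi_{h}(\boldsymbol{t})=\int_{\mathbb{R}^{2}}\psi_{f}(\boldsymbol{y})\psi_{g}(\boldsymbol{t}-\boldsymbol{y})\,\mathrm{d}\boldsymbol{y}=(\psi_{f}\circledast\psi_{g})(\boldsymbol{t})$, the ordinary convolution of mother wavelets. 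Finally, in the one-variable convolution (\ref{41}), freezing $\theta=0$ removes the angular slot and leaves the standard convolution in $\boldsymbol{b}$ at fixed scale $a$.

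Substituting these three reductions into (\ref{52}) gives $W_{h}(\boldsymbol{b},a)=\int_{\mathbb{R}^{2}}W_{f}(\boldsymbol{b}-\boldsymbol{n},a)\,W_{g}(\boldsymbol{n},a)\,\mathrm{d}\boldsymbol{n}$, which is exactly the classical convolution theorem for the WT recorded in \cite{ref41}. There is no genuine obstacle in this argument, since Corollary \ref{corollary2} has already done the analytic work; the only care required is bookkeeping — matching the normalization $a^{-1}$ in both $W_{f}$ and $W_{g}$, confirming that the determinant of $R_{0}$ introduces no extra Jacobian, and checking that the partial convolution $\circledast_{1}$ defined in (\ref{41}) with a trivial $\theta$-dependence is identical to the classical convolution in the translation variable.
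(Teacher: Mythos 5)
Your verification is correct and coincides with the paper's (implicit) argument: the paper states this corollary without proof, and the intended justification is exactly the substitution you carry out, namely that $R_{0}$ is the identity so the polar wavelet family, the wavelet convolution $\circledast_{\psi}$, and the partial convolution $\circledast_{1}$ all collapse to their classical two-dimensional counterparts, reducing (\ref{52}) to the convolution theorem of \cite{ref41}. No issues.
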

	
	\subsection{Correlation Theorem}
	\begin{theorem}[Correlation theorem]\label{theorem3}
		Let $\psi_{f}\in L^{2}(\mathbb{R}^{2})$ and $\psi_{g}\in L^{1}(\mathbb{R}^{2})\bigcap L^{2}(\mathbb{R}^{2})$ be two admissible polar mother wavelets. If $h=\left( f\odot g\right)$ and $\psi_{h}=\left( \psi_{f}\odot_{\psi} \psi_{g}\right)$, Then the PLCWT of $h$ is given by
		\begin{align}
			\begin{split}
				W^{M}_{h}(\boldsymbol{b},a,\theta)=\left(W^{M}_{f}\odot_{1} W^{M}_{g}\right)(-\boldsymbol{b},a,\theta),	 
			\end{split}
			\label{53}
		\end{align}
		where $W^{M}_{f}$ and $W^{M}_{g}$ denote the PLCWT of the functions $f\in L^{2}(\mathbb{R}^{2})$ and $g\in L^{1}(\mathbb{R}^{2})\bigcap L^{2}(\mathbb{R}^{2})$, respectively.
	\end{theorem}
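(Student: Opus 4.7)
The plan is to mirror the proof of Theorem \ref{theorem2} closely, exploiting the fact that correlation differs from convolution only in the sign of one shift. I would first apply the defining formula (\ref{10}) to $h=f\odot g$, writing
$$
W^{M}_{h}(\boldsymbol{b},a,\theta)=\frac{1}{a}\int_{\mathbb{R}^{2}}h(\boldsymbol{x})\,\overline{\psi_{h}\!\left(\tfrac{R_{-\theta}(\boldsymbol{x}-\boldsymbol{b})}{a}\right)}\,e^{j\frac{A}{2B}(\boldsymbol{x}^{2}-\boldsymbol{b}^{2})}\,{\rm d}\boldsymbol{x},
$$
then substitute $h(\boldsymbol{x})=\int f(\boldsymbol{t})g(\boldsymbol{x}+\boldsymbol{t})\,{\rm d}\boldsymbol{t}$ from (\ref{43}) and unfold $\psi_{h}$ through (\ref{45}), which after using $R_{\theta}R_{-\theta}=I$ produces a factor $\overline{\psi_{g}\!\left(R_{-\theta}(\boldsymbol{x}-\boldsymbol{b}+a\boldsymbol{y})/a\right)}$ (note the $+a\boldsymbol{y}$, which is the analogue of $-a\boldsymbol{y}$ seen in the convolution proof).

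Next I would carry out a change of variables modeled on the one used in Theorem \ref{theorem2}, namely $\boldsymbol{m}=\boldsymbol{x}+\boldsymbol{t}$ together with an auxiliary variable $\boldsymbol{n}$ defined so that $a\boldsymbol{y}=\boldsymbol{t}+\boldsymbol{b}-\boldsymbol{n}$ (so $R_{-\theta}\boldsymbol{y}=R_{-\theta}(\boldsymbol{t}-(\boldsymbol{n}-\boldsymbol{b}))/a$ appears naturally in the $\psi_{f}$ factor) and subject to the algebraic side-condition $2\boldsymbol{n}^{2}+2\boldsymbol{n}\!\cdot\!\boldsymbol{b}-2\boldsymbol{m}\!\cdot\!\boldsymbol{t}=0$ that balances the chirp phase, in the same spirit as the constraint used in (\ref{51}). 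Under this substitution the overall chirp $e^{j\frac{A}{2B}(\boldsymbol{x}^{2}-\boldsymbol{b}^{2})}$ splits as $e^{j\frac{A}{2B}(\boldsymbol{t}^{2}-(\boldsymbol{n}+\boldsymbol{b})^{2})}\cdot e^{j\frac{A}{2B}(\boldsymbol{m}^{2}-\boldsymbol{n}^{2})}$, so the triple integral decouples into an outer ${\rm d}\boldsymbol{n}$ integral whose integrand is the product of a $\boldsymbol{t}$-integral equal to $W^{M}_{f}(\boldsymbol{n}+\boldsymbol{b},a,\theta)$ and an $\boldsymbol{m}$-integral equal to $W^{M}_{g}(\boldsymbol{n},a,\theta)$.

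Finally, I would recognize
$$
\int_{\mathbb{R}^{2}}W^{M}_{f}(\boldsymbol{n}+\boldsymbol{b},a,\theta)\,W^{M}_{g}(\boldsymbol{n},a,\theta)\,{\rm d}\boldsymbol{n}
=\bigl(W^{M}_{f}\odot_{1}W^{M}_{g}\bigr)(-\boldsymbol{b},a,\theta)
$$
by comparing with (\ref{44}) and the substitution $\boldsymbol{n}\mapsto \boldsymbol{n}-\boldsymbol{b}$, which explains the appearance of the reflected argument $-\boldsymbol{b}$ on the right-hand side of (\ref{53}). The bookkeeping is almost identical to Theorem \ref{theorem2}; the distinctive feature here is that the $+\boldsymbol{t}$ inside $g$ in (\ref{43}) forces the $f$-piece to be centered at $\boldsymbol{n}+\boldsymbol{b}$ rather than $\boldsymbol{b}-\boldsymbol{n}$, which is exactly what turns the result into a correlation evaluated at $-\boldsymbol{b}$.

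I expect the main obstacle to be the phase bookkeeping: specifically, verifying that the signed shifts in the correlation definition (\ref{45}) of $\psi_{h}$ combine with $\boldsymbol{m}=\boldsymbol{x}+\boldsymbol{t}$ and the chirp $e^{j\frac{A}{2B}(\boldsymbol{x}^{2}-\boldsymbol{b}^{2})}$ so that, up to the prescribed side-condition on $(\boldsymbol{m},\boldsymbol{n},\boldsymbol{t})$, the integrand factors cleanly into a $(\boldsymbol{t},\boldsymbol{n}+\boldsymbol{b})$-PLCWT of $f$ and a $(\boldsymbol{m},\boldsymbol{n})$-PLCWT of $g$. Once that decomposition is in hand the theorem follows immediately.
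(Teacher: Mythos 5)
Your overall route is the same as the paper's: substitute (\ref{43}) and (\ref{45}) into the defining integral, change variables $\boldsymbol{m}=\boldsymbol{x}+\boldsymbol{t}$ and $\boldsymbol{n}=\boldsymbol{b}+\boldsymbol{t}-a\boldsymbol{y}$, impose an algebraic side condition to split the chirp, and factor the triple integral into two PLCWTs. However, there is a concrete sign inconsistency in your middle step. With your own choice $a\boldsymbol{y}=\boldsymbol{t}+\boldsymbol{b}-\boldsymbol{n}$ you correctly note that the $\psi_{f}$ factor becomes $\psi_{f}\bigl(R_{-\theta}(\boldsymbol{t}-(\boldsymbol{n}-\boldsymbol{b}))/a\bigr)$, i.e.\ a wavelet centred at $\boldsymbol{n}-\boldsymbol{b}$; consequently the $\boldsymbol{t}$-integral is $W^{M}_{f}(\boldsymbol{n}-\boldsymbol{b},a,\theta)$, not $W^{M}_{f}(\boldsymbol{n}+\boldsymbol{b},a,\theta)$ as you claim, and the chirp would have to split with $(\boldsymbol{n}-\boldsymbol{b})^{2}$ rather than $(\boldsymbol{n}+\boldsymbol{b})^{2}$. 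This $\boldsymbol{n}-\boldsymbol{b}$ centring is exactly what the paper's display (\ref{58}) obtains. The resulting integral $\int_{\mathbb{R}^{2}}W^{M}_{f}(\boldsymbol{n}-\boldsymbol{b},a,\theta)W^{M}_{g}(\boldsymbol{n},a,\theta)\,{\rm d}\boldsymbol{n}$ equals, by (\ref{44}) and the substitution $\boldsymbol{u}=\boldsymbol{n}-\boldsymbol{b}$, the correlation $\left(W^{M}_{f}\odot_{1}W^{M}_{g}\right)(+\boldsymbol{b},a,\theta)$, not $(-\boldsymbol{b},a,\theta)$. So your final identification does not follow from the substitution you actually defined: the $\boldsymbol{n}+\boldsymbol{b}$ centring you need in order to land on $(-\boldsymbol{b})$ contradicts your own parenthetical remark. (To be fair, the paper's last line has the mirror-image defect: it derives the $\boldsymbol{n}-\boldsymbol{b}$ form and then labels it $\left(W^{M}_{f}\circledast_{1}W^{M}_{g}\right)(-\boldsymbol{b},a,\theta)$, with the wrong operator symbol to boot. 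You have effectively moved that sign discrepancy one step earlier rather than resolving it.)

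Two further caveats you inherit by mirroring Theorem \ref{theorem2}: the side condition $2\boldsymbol{n}^{2}+2\boldsymbol{n}\cdot\boldsymbol{b}-2\boldsymbol{m}\cdot\boldsymbol{t}=0$ is not something a change of variables can impose, since $(\boldsymbol{t},\boldsymbol{m},\boldsymbol{n})$ range independently over $\mathbb{R}^{2}$ after the substitution, so the chirp $e^{j\frac{A}{2B}(\boldsymbol{x}^{2}-\boldsymbol{b}^{2})}$ does not in general factor as $e^{j\frac{A}{2B}(\boldsymbol{t}^{2}-(\cdot)^{2})}e^{j\frac{A}{2B}(\boldsymbol{m}^{2}-\boldsymbol{n}^{2})}$; and the Jacobian ${\rm d}\boldsymbol{y}={\rm d}\boldsymbol{n}/a^{2}$ leaves a power of $a$ unaccounted for relative to the two $1/a$ prefactors needed for the two PLCWTs. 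These are weaknesses of the source argument as much as of yours, but the phase bookkeeping you flag as ``the main obstacle'' is not mere bookkeeping --- it is precisely where the argument fails to close without additional justification, and it is also where your sign for the evaluation point goes astray.
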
	
	\begin{proof}
		Using the definition of the LCT, we have
		\begin{align}
			\begin{split}
				W^{M}_{h}(\boldsymbol{b},a,\theta)=\frac{1}{a}\int_{\mathbb{R}^{2}}h(\boldsymbol{x})\overline{\psi_{h}\left(\frac{R_{-\theta}\left(\boldsymbol{x}-\boldsymbol{b} \right) }{a} \right)}e^{j\frac{A}{2B}(\boldsymbol{x}^{2}-\boldsymbol{b}^{2})}{\rm{d}}\boldsymbol{x}.
			\end{split}
			\label{54}
		\end{align}
		According to the definition of the correlation operators, we obtain
		\begin{align}
			\begin{split}
				h(\boldsymbol{x})=\left( f\odot g\right) (\boldsymbol{x})=\int_{\mathbb{R}^{2}}f(\boldsymbol{t})g(\boldsymbol{x}+\boldsymbol{t}){\rm{d}}\boldsymbol{t},
			\end{split}
			\label{55}
		\end{align}
		\begin{align}
			\begin{split}
				\psi_{h}\left( \frac{R_{\!-\theta}\left(\boldsymbol{x}\!-\!\boldsymbol{b} \right) }{a}\right)\!=\!\!\int_{\mathbb{R}^{2}}\!\!\psi_{f}(R_{-\theta}\boldsymbol{y})\psi_{g}\left( \frac{R_{\!-\theta}\left(\boldsymbol{x}\!-\!\boldsymbol{b}\!+\!a\boldsymbol{y} \right)}{a} \right)\!{\rm{d}}\boldsymbol{y}. 
			\end{split}
			\label{56}
		\end{align}
		Combining (\ref{54}), (\ref{55}), and (\ref{56}), we obtain
		\begin{align}
			\begin{split}
				W^{M}_{h}(\boldsymbol{b},a,\theta)&=\frac{1}{a}e^{j\frac{A}{2B}(\boldsymbol{x}^{2}-\boldsymbol{b}^{2})}\int_{\mathbb{R}^{2}}\int_{\mathbb{R}^{2}}\int_{\mathbb{R}^{2}}f(\boldsymbol{t})g(\boldsymbol{x}+\boldsymbol{t})\\
				&\quad \times\psi_{f}(R_{\!-\theta}\boldsymbol{y})\overline{\psi_{g}\left( \frac{R_{\!-\theta}\left(\boldsymbol{x}\!-\!\boldsymbol{b}\!+\!a\boldsymbol{y} \right)}{a} \right)}{\rm{d}}\boldsymbol{t}{\rm{d}}\boldsymbol{x}{\rm{d}}\boldsymbol{y}.
			\end{split}
			\label{57}
		\end{align}
		Let variables $\boldsymbol{m}=\boldsymbol{x}+\boldsymbol{t}$ and $\boldsymbol{n}=\boldsymbol{b}-a\boldsymbol{y}+\boldsymbol{t}$ satisfy $2\boldsymbol{n}^{2}-2\boldsymbol{b}\cdot\boldsymbol{n}+2\boldsymbol{t}\cdot\boldsymbol{m}=0$, we have
		\begin{align}
			\begin{split}
				W^{M}_{h}(\boldsymbol{b},a,\theta)
				&=\frac{1}{a^{2}}e^{j\frac{A}{2B}\left[ \boldsymbol{t}^{2}-\left(\boldsymbol{n}-\boldsymbol{b} \right)^{2}+\boldsymbol{m}^{2}-\boldsymbol{n}^{2}\right] }\int_{\mathbb{R}^{2}}\int_{\mathbb{R}^{2}}\int_{\mathbb{R}^{2}}f\left( \boldsymbol{t} \right) g(\boldsymbol{m})\\
				&\quad\overline{\psi_{f}\left( \frac{R_{\!-\theta}\left(\boldsymbol{t}\!-\!\left( \boldsymbol{n}\!-\!\boldsymbol{b}\right)\right)}{a}\right)}\overline{\psi_{g}\left( \frac{R_{\!-\theta}\left(\boldsymbol{m}\!-\!\boldsymbol{n}\right)}{a} \right)}{\rm{d}}\boldsymbol{t}{\rm{d}}\boldsymbol{n}{\rm{d}}\boldsymbol{m}\\
				&=\int_{\mathbb{R}^{2}}W^{M}_{f}(\boldsymbol{n}-\boldsymbol{b},a,\theta)W^{M}_{g}(\boldsymbol{n},a,\theta){\rm{d}}\boldsymbol{n}\\
				&=\left(W^{M}_{f}\circledast_{1} W^{M}_{g}\right)(\boldsymbol{-b},a,\theta).
			\end{split}
			\label{58}
		\end{align}
		The proof is completed. 
	\end{proof}
	\begin{corollary}\label{corollary4} 
		According to (\ref{53}), for $M=\left [ \begin{matrix}
			0& 1 \\
			-1& 0 \\
		\end{matrix} \right ] $, Theorem \ref{theorem3} will be reduced to
		\begin{align}
			\begin{split}
				W_{h}(\boldsymbol{b},a,\theta)=\left(W_{f}\odot_{1} W_{g}\right)(-\boldsymbol{b},a,\theta),	 
			\end{split}
			\label{59}
		\end{align}
		where $W_{f}$ and $W_{g}$ denote the PWT of the functions $f$ and $g$ with wavelets $\psi_{f}$ and $\psi_{g}$, respectively.
	\end{corollary}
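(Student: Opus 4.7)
The plan is to verify that the specific parameter matrix $M = \bigl(\begin{smallmatrix} 0 & 1 \\ -1 & 0 \end{smallmatrix}\bigr)$ collapses every ingredient of Theorem \ref{theorem3} into its classical, non-canonical counterpart, after which the corollary follows by direct substitution. The heart of the argument is the observation that with $A=0$ and $B=1$, the quantity $\frac{A}{2B}$ vanishes, so the chirp modulation $e^{-j\frac{A}{2B}(\boldsymbol{t}^{2}-\boldsymbol{b}^{2})}$ appearing throughout the PLCWT apparatus becomes identically equal to one.

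First I would revisit the kernel in equation (\ref{11}): under the prescribed $M$ it reduces to $\psi^{M}_{\boldsymbol{b},a,\theta}(\boldsymbol{t}) = \psi_{\boldsymbol{b},a,\theta}(\boldsymbol{t})$, so Definition \ref{3} degenerates into Definition \ref{1}, giving $W^{M}_{f}(\boldsymbol{b},a,\theta) = W_{f}(\boldsymbol{b},a,\theta)$ for every $f\in L^{2}(\mathbb{R}^{2})$, and analogously for $g$ and $h$. This identification can be read off directly from Lemma \ref{lemma4} as well, since $\widetilde{f}(\boldsymbol{t}) = f(\boldsymbol{t})\cdot e^{j\frac{A}{2B}\boldsymbol{t}^{2}}$ becomes $\widetilde{f} = f$ and the prefactor $e^{-j\frac{A}{2B}\boldsymbol{b}^{2}}$ becomes $1$.

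Next I would apply Theorem \ref{theorem3} to the hypothesized convolved/correlated signals $h = f\odot g$ and $\psi_{h} = \psi_{f}\odot_{\psi}\psi_{g}$. The theorem yields $W^{M}_{h}(\boldsymbol{b},a,\theta) = (W^{M}_{f}\odot_{1} W^{M}_{g})(-\boldsymbol{b},a,\theta)$, and substituting the three identifications established in the previous step converts this directly into $W_{h}(\boldsymbol{b},a,\theta) = (W_{f}\odot_{1} W_{g})(-\boldsymbol{b},a,\theta)$, which is precisely (\ref{59}). It is worth noting in passing that the one-variable correlation operator $\odot_{1}$ acts only on the spatial slot $\boldsymbol{b}$ and is therefore insensitive to the reduction of $M$.

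There is essentially no obstacle to this proof: it is a clean specialization argument and the only thing to watch is that $AD-BC = 1$ is satisfied by the chosen matrix, so the parameter $M$ is admissible for the LCT, and that the admissibility constant $C_{\psi,M}$ coincides with the ordinary polar wavelet admissibility constant $C_{\psi}$ of (\ref{3}) — which again follows because $e^{-j\frac{A}{2B}(\cdot)^{2}} = 1$ in the integrand of (\ref{12}), reducing it to (\ref{3}). Hence the proof is essentially a one-line invocation of Theorem \ref{theorem3} preceded by the reduction $W^{M} = W$.
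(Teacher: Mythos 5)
Your proposal is correct and matches the paper's intent exactly: the paper offers no separate proof of this corollary, treating it as the immediate specialization of Theorem \ref{theorem3} obtained because $A=0$, $B=1$ kills every chirp factor and makes the PLCWT coincide with the PWT, which is precisely your argument. Your additional checks ($AD-BC=1$ and $C_{\psi,M}=C_{\psi}$) are accurate but not needed beyond what the paper already records in Definition \ref{3}.
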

	\begin{corollary}\label{corollary5} 
		It is interesting to note that by taking $\theta=0$ in Corollary \ref{corollary4}, it degenerates into the classical correlation theorem of the WT \cite{ref41}.
	\end{corollary}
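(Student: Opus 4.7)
The plan is to specialize the identity in Corollary \ref{corollary4} to the angular coordinate $\theta=0$ and verify that every object collapses to its classical 2D counterpart. First, I would note that $R_0$ equals the identity matrix by the definition in (\ref{2}), so $R_{-\theta}\boldsymbol{t}=\boldsymbol{t}$ when $\theta=0$. Substituting this into (\ref{1}) makes the polar wavelet family coincide with the ordinary affine family $\psi_{\boldsymbol{b},a}(\boldsymbol{t})=a^{-1}\psi((\boldsymbol{t}-\boldsymbol{b})/a)$, and hence Definition \ref{1} yields $W_f(\boldsymbol{b},a,0)=\langle f,\psi_{\boldsymbol{b},a}\rangle_{L^2(\mathbb{R}^2)}$, which is exactly the classical 2D wavelet transform of $f$ against the mother wavelet $\psi$. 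The same reduction applied to $\psi_g$ and to the composite wavelet $\psi_h$ will turn $W_g$ and $W_h$ into their standard 2D WT counterparts.

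Next, I would restrict the operators $\odot_{\psi}$ and $\odot_{1}$ from Definition \ref{definition2} to $\theta=0$. Plugging $R_0=I$ into (\ref{45}) eliminates the rotations, and $\psi_h=\psi_f\odot_{\psi}\psi_g$ collapses to the ordinary 2D correlation of two wavelets. Likewise (\ref{44}) at $\theta=0$ becomes the standard 2D cross-correlation in the spatial variable $\boldsymbol{y}$, carried out at each fixed scale $\lambda$. Thus both the composite wavelet and the correlation operator on the right-hand side of Corollary \ref{corollary4} lose all directional structure once the orientation is frozen.

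Combining these observations and evaluating Corollary \ref{corollary4} at $\theta=0$ produces the relation $W_h(\boldsymbol{b},a,0)=(W_f\odot_1 W_g)(-\boldsymbol{b},a,0)$, in which every transform symbol $W$ is now an ordinary 2D WT and every $\odot$ is an ordinary 2D correlation. This is exactly the classical correlation theorem for the WT as recorded in \cite{ref41}, so no further argument is required beyond the two specializations above. The only delicate point I anticipate is bookkeeping: I will need to check that the sign $-\boldsymbol{b}$ inherited from Theorem \ref{theorem3} matches the convention used for the correlation variable in \cite{ref41}, since different authors swap the roles of $f$ and $g$ in defining the cross-correlation. Confirming this convention is the sole obstacle, and once it is settled the corollary follows immediately.
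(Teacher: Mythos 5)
Your proposal is correct and follows exactly the route the paper intends: the paper offers no written proof for this corollary, treating it as an immediate specialization, and your argument — setting $R_{0}=I$ so that the polar wavelet family, the operators $\odot_{\psi}$ and $\odot_{1}$, and the transforms $W_{f},W_{g},W_{h}$ all collapse to their classical non-directional 2D counterparts in the identity $W_{h}(\boldsymbol{b},a,0)=(W_{f}\odot_{1}W_{g})(-\boldsymbol{b},a,0)$ — is precisely that verification. Your flagged concern about matching the sign convention of $-\boldsymbol{b}$ with \cite{ref41} is a reasonable bookkeeping check but does not affect the substance.
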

	
	\section{Uncertainty Principles Associated with PLCWT}
	\label{Uncertainty}
	One of the most important discoveries is the uncertainty principle, which describes different physical phenomena in different fields. At present, various transforms have been studied in detail, and many theoretical results have been obtained \cite{ref4,ref5,ref19,ref22,ref23,ref42,ref43}. But the discovery of the uncertainty principle related to PLCWT is rare. Thus, three kinds of uncertainty principles associated with the PLCWT are mainly investigated in this section.
	\subsection{Heisenberg’s Inequality}
	\begin{theorem}\label{theorem10}
		Let $\psi\in L^{2}(\mathbb{R}^{2})$ be an admissible polar mother wavelet, and $f\in L^{2}(\mathbb{R}^{2})$, then
		\begin{align}
			\begin{split}
				\left\lbrace \int_{0}^{2\pi}\int_{0}^{+\infty}\int_{\mathbb{R}^{2}}\boldsymbol{b}^{2}\bigg|W^{M}_{f}(\boldsymbol{b},a,\theta)\bigg|^{2}\frac{{\rm{d}}\boldsymbol{b}{\rm{d}}a{\rm{d}}\theta}{a^{3}}\right\rbrace^{\frac{1}{2}}&\left\lbrace \int_{\mathbb{R}^{2}}\boldsymbol{\xi}^{2}\bigg|\mathscr{L}^{M}\left\lbrace f\right\rbrace (\boldsymbol{\xi})\bigg|^{2}{\rm{d}}\boldsymbol{\xi}\right\rbrace ^{\frac{1}{2}}\\
				&\geq\frac{B\sqrt{C_{\psi,M}}}{2}\Vert f\Vert^{2}_{L^{2}(\mathbb{R}^{2})}, \end{split}
			\label{60}
		\end{align} 
		where $C_{\psi,M}$ is give by (\ref{12}).
	\end{theorem}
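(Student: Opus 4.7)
The plan is to reduce this to the classical Heisenberg inequality for the LCT applied to $W^{M}_{f}(\boldsymbol{b},a,\theta)$ viewed as a function of $\boldsymbol{b}$ alone, and then to integrate out the $(a,\theta)$ variables using the admissibility machinery already developed (Lemma \ref{lemma5} and Property \ref{property6}).

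First I would invoke the LCT version of Heisenberg's inequality, namely, for any $u\in L^{2}(\mathbb{R}^{2})$,
\begin{equation*}
\left(\int_{\mathbb{R}^{2}}\boldsymbol{b}^{2}|u(\boldsymbol{b})|^{2}\,{\rm d}\boldsymbol{b}\right)^{\!1/2}\!\left(\int_{\mathbb{R}^{2}}\boldsymbol{\xi}^{2}|\mathscr{L}^{M}\{u\}(\boldsymbol{\xi})|^{2}\,{\rm d}\boldsymbol{\xi}\right)^{\!1/2}\ge\frac{|B|}{2}\|u\|_{L^{2}(\mathbb{R}^{2})}^{2},
\end{equation*}
applied pointwise in $(a,\theta)$ to $u(\boldsymbol{b})=W^{M}_{f}(\boldsymbol{b},a,\theta)$. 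Multiplying both sides by $a^{-3}$ and integrating over $a\in(0,\infty)$ and $\theta\in[0,2\pi]$, then applying the Cauchy--Schwarz inequality to the left-hand integral with weight ${\rm d}a\,{\rm d}\theta/a^{3}$, I obtain
\begin{equation*}
I_{1}^{1/2}\cdot I_{2}^{1/2}\ge\frac{|B|}{2}\int_{0}^{2\pi}\!\!\int_{0}^{\infty}\!\!\int_{\mathbb{R}^{2}}|W^{M}_{f}(\boldsymbol{b},a,\theta)|^{2}\frac{{\rm d}\boldsymbol{b}\,{\rm d}a\,{\rm d}\theta}{a^{3}},
\end{equation*}
where $I_{1}$ is the left-hand triple integral appearing in the theorem statement and $I_{2}$ is the analogous triple integral with $\boldsymbol{\xi}^{2}|\mathscr{L}^{M}\{W^{M}_{f}(\cdot,a,\theta)\}(\boldsymbol{\xi})|^{2}$ in place of $\boldsymbol{b}^{2}|W^{M}_{f}|^{2}$. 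By Property \ref{property6}, the right-hand side equals $\tfrac{|B|}{2}C_{\psi,M}\|f\|_{L^{2}(\mathbb{R}^{2})}^{2}$.

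The main technical step is then to show that $I_{2}=C_{\psi,M}\int_{\mathbb{R}^{2}}\boldsymbol{\xi}^{2}|\mathscr{L}^{M}\{f\}(\boldsymbol{\xi})|^{2}{\rm d}\boldsymbol{\xi}$. For this I would substitute the formula from Lemma \ref{lemma5}, which gives
\begin{equation*}
|\mathscr{L}^{M}\{W^{M}_{f}(\cdot,a,\theta)\}(\boldsymbol{\xi})|^{2}=a^{2}\bigl|\mathscr{L}^{M}\{e^{-j\frac{A}{2B}(\cdot)^{2}}\psi\}(aR_{-\theta}\boldsymbol{\xi})\bigr|^{2}|\mathscr{L}^{M}\{f\}(\boldsymbol{\xi})|^{2},
\end{equation*}
and then swap the order of integration (Fubini) so that the $(a,\theta)$-integral is performed first. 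The $a^{2}$ factor combines with the $a^{-3}$ measure to produce exactly the ${\rm d}a\,{\rm d}\theta/a$ measure appearing in the admissibility integral (\ref{12}) for the fixed direction $\boldsymbol{\xi}$, yielding the constant $C_{\psi,M}$. Inserting this evaluation and dividing through by $\sqrt{C_{\psi,M}}$ delivers the claimed lower bound $\tfrac{B\sqrt{C_{\psi,M}}}{2}\|f\|_{L^{2}(\mathbb{R}^{2})}^{2}$.

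The main obstacle is the careful bookkeeping in the Cauchy--Schwarz step: one must split the weight $a^{-3}=a^{-3/2}\cdot a^{-3/2}$ so that both factors $I_{1}$ and $I_{2}$ appear with the correct measure, and then verify that the same $a^{2}$ gained from Lemma \ref{lemma5} is the one that converts $a^{-3}\,{\rm d}a$ into the admissibility $a^{-1}\,{\rm d}a$. Once that balance is checked, the remaining computation is routine use of Fubini and the admissibility condition (\ref{12}), together with the Plancherel-type identity for the LCT used implicitly in Property \ref{property6}.
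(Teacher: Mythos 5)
Your proposal follows essentially the same route as the paper's own proof: apply the LCT Heisenberg inequality pointwise in $(a,\theta)$ to $W^{M}_{f}(\cdot,a,\theta)$, integrate against ${\rm d}a\,{\rm d}\theta/a^{3}$, use Cauchy--Schwarz together with Property \ref{property6} on the right-hand side, and evaluate the $\boldsymbol{\xi}$-weighted triple integral via Lemma \ref{lemma5} and the admissibility condition (\ref{12}). The bookkeeping you flag (the $a^{2}$ from Lemma \ref{lemma5} converting $a^{-3}\,{\rm d}a$ into the admissibility measure $a^{-1}\,{\rm d}a$) is exactly the computation the paper carries out in its equation chain, so the argument is correct and matches the paper.
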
	
	\begin{proof}
		Let $f,\psi\in L^{2}(\mathbb{R}^{2})$, then $$\psi\left(\frac{R_{-\theta}\left(\boldsymbol{t}-\boldsymbol{b} \right) }{a} \right)\in L^{2}(\mathbb{R}^{2}),\quad  W^{M}_{f}(\boldsymbol{b},a,\theta)\in L^{2}(\mathbb{R}^{2}).$$
		If $\mathscr{L}^{M}{f}\in L^{2}(\mathbb{R}^{2})$, the Heisenberg's inequality for the LCT is given by \cite{ref19,ref43}
		\begin{align}
			\begin{split}
				\left\lbrace \int_{\mathbb{R}^{2}}\boldsymbol{b}^{2}\bigg|f(\boldsymbol{b})\bigg|^{2}{\rm{d}}\boldsymbol{b}\right\rbrace ^{\frac{1}{2}}&\left\lbrace \int_{\mathbb{R}^{2}}\boldsymbol{\xi}^{2}\bigg|\mathscr{L}^{M}\left\lbrace f\right\rbrace (\boldsymbol{\xi})\bigg|^{2}{\rm{d}}\boldsymbol{\xi}\right\rbrace ^{\frac{1}{2}}\geq\frac{B}{2}\int_{\mathbb{R}^{2}}\bigg|f(\boldsymbol{b})\bigg|^{2}{\rm{d}}\boldsymbol{b}.  
			\end{split}
			\label{61}
		\end{align} 
		Replacing $f(\boldsymbol{b})$ with $W^{M}_{f}(\boldsymbol{b},a,\theta)$ in (\ref{61}), we obtain 
		\begin{align}
			\begin{split}
				\left\lbrace \int_{\mathbb{R}^{2}}\boldsymbol{b}^{2}\bigg|W^{M}_{f}(\boldsymbol{b},a,\theta)\bigg|^{2}{\rm{d}}\boldsymbol{b}\right\rbrace ^{\frac{1}{2}}
				&\left\lbrace \int_{\mathbb{R}^{2}}\boldsymbol{\xi}^{2}\bigg|\mathscr{L}^{M}\left\lbrace W^{M}_{f}(\boldsymbol{b},a,\theta)\right\rbrace (\boldsymbol{\xi})\bigg|^{2}{\rm{d}}\boldsymbol{\xi}
				\right\rbrace ^{\frac{1}{2}}\\ &\qquad\qquad \geq\frac{B}{2}\int_{\mathbb{R}^{2}}\bigg|W^{M}_{f}(\boldsymbol{b},a,\theta)\bigg|^{2}{\rm{d}}\boldsymbol{b}.  
			\end{split}
			\label{62}
		\end{align} 
		Integrating both sides of (\ref{62}) with respect to the measure ${\rm{d}}\boldsymbol{a}{\rm{d}}\boldsymbol{\theta}/a^{3}$, we have
		\begin{align}
			\begin{split}
				\int_{0}^{2\pi}\!\!\int_{0}^{+\infty}\!\!\left\lbrace \int_{\mathbb{R}^{2}}\boldsymbol{b}^{2}\bigg|W^{M}_{f}(\boldsymbol{b},a,\theta)\bigg|^{2}{\rm{d}}\boldsymbol{b}\right\rbrace ^{\frac{1}{2}}&\left\lbrace \int_{\mathbb{R}^{2}}\!\!\boldsymbol{\xi}^{2}\bigg|\mathscr{L}^{M}\left\lbrace W^{M}_{f}(\boldsymbol{b},a,\theta)\right\rbrace (\boldsymbol{\xi})\bigg|^{2}\!\!{\rm{d}}\boldsymbol{\xi}
				\right\rbrace ^{\frac{1}{2}}\!\!\frac{{\rm{d}}\boldsymbol{a}{\rm{d}}\boldsymbol{\theta}}{a^{3}}\\ 
				&\geq\frac{B}{2}\int_{0}^{2\pi}\!\!\int_{0}^{+\infty}\!\!\int_{\mathbb{R}^{2}}\bigg|W^{M}_{f}(\boldsymbol{b},a,\theta)\bigg|^{2}\frac{{\rm{d}}\boldsymbol{a}{\rm{d}}\boldsymbol{\theta}{\rm{d}}\boldsymbol{b}}{a^{3}}.  
			\end{split}
			\label{63}
		\end{align} 
		By Schwartz's inequality and using Property \ref{property6}, we obtain
		\begin{align}
			\begin{split}
				&\left\lbrace \int_{0}^{2\pi}\!\!\int_{0}^{+\infty}\!\!\int_{\mathbb{R}^{2}}\!\!\boldsymbol{b}^{2}\bigg|W^{M}_{f}(\boldsymbol{b},a,\theta)\bigg|^{2}\frac{{\rm{d}}\boldsymbol{b}{\rm{d}}\boldsymbol{a}{\rm{d}}\boldsymbol{\theta}}{a^{3}}\right\rbrace ^{\frac{1}{2}}\\
				&\quad\times\left\lbrace \!\!\int_{0}^{2\pi}\!\!\int_{0}^{+\infty}\!\!\int_{\mathbb{R}^{2}}\boldsymbol{\xi}^{2}\bigg|\mathscr{L}^{M}\left\lbrace W^{M}_{f}(\boldsymbol{b},a,\theta)\right\rbrace (\boldsymbol{\xi})\bigg|^{2}\frac{{\rm{d}}\boldsymbol{\xi}{\rm{d}}\boldsymbol{a}{\rm{d}}\boldsymbol{\theta}}{a^{3}}\right\rbrace^{\frac{1}{2}}\\
				&\quad \geq\frac{B}{2}\!\!\int_{0}^{2\pi}\!\!\int_{0}^{+\infty}\!\!\int_{\mathbb{R}^{2}}\bigg|W^{M}_{f}(\boldsymbol{b},a,\theta)\bigg|^{2}\frac{{\rm{d}}\boldsymbol{b}{\rm{d}}\boldsymbol{a}{\rm{d}}\boldsymbol{\theta}}{a^{3}}\\
				&\quad=\frac{B}{2}C_{\psi,M}\Vert f\Vert^{2}_{L^{2}(\mathbb{R}^{2})}.  
			\end{split}
			\label{64}
		\end{align} 
		Using Lemma \ref{lemma5} and the admissibility condition, we get
		\begin{align}
			\begin{split}
				&\quad \,\,\int_{0}^{2\pi}\int_{0}^{+\infty}\int_{\mathbb{R}^{2}}\boldsymbol{\xi}^{2}\bigg|\mathscr{L}^{M}\left\lbrace W^{M}_{f}(\boldsymbol{b},a,\theta)\right\rbrace (\boldsymbol{\xi})\bigg|^{2}\frac{{\rm{d}}\boldsymbol{\xi}{\rm{d}}\boldsymbol{a}{\rm{d}}\boldsymbol{\theta}}{a^{3}}\\
				&=\int_{0}^{2\pi}\!\!\int_{0}^{+\infty}\!\!\int_{\mathbb{R}^{2}}\bigg|ae^{j\frac{D}{2B}(a R_{\!-\theta}\boldsymbol{\xi})^{2}}\overline{\mathscr{L}^{M}\left\lbrace e^{-j \frac{A}{2B}(\cdot)^{2}}\psi \right\rbrace\! (aR_{-\theta}\boldsymbol{\xi})}\\
				&\qquad\times\mathscr{L}^{M}\left\lbrace f\right\rbrace (\boldsymbol{\xi})\bigg|^{2}\boldsymbol{\xi}^{2}\,\frac{{\rm{d}}\boldsymbol{\xi}{\rm{d}}\boldsymbol{a}{\rm{d}}\boldsymbol{\theta}}{a^{3}}\\
				&=C_{\psi,M}\int_{0}^{+\infty}\boldsymbol{\xi}^{2}\bigg|\mathscr{L}^{M}\left\lbrace f\right\rbrace (\boldsymbol{\xi})\bigg|^{2}{\rm{d}}\boldsymbol{\xi}.
			\end{split}
			\label{65}
		\end{align} 
		Substituting (\ref{65}) into (\ref{64}), (\ref{64}) can be written as
		\begin{align}
			\begin{split}
				\left\lbrace \int_{0}^{2\pi}\int_{0}^{+\infty}\int_{\mathbb{R}^{2}}\boldsymbol{b}^{2}\bigg|W^{M}_{f}(\boldsymbol{b},a,\theta)\bigg|^{2}\frac{{\rm{d}}\boldsymbol{b}{\rm{d}}a{\rm{d}}\theta}{a^{3}}\right\rbrace^{\frac{1}{2}}&\left\lbrace \int_{\mathbb{R}^{2}}\boldsymbol{\xi}^{2}\bigg|\mathscr{L}^{M}\left\lbrace f\right\rbrace (\boldsymbol{\xi})\bigg|^{2}{\rm{d}}\boldsymbol{\xi}\right\rbrace ^{\frac{1}{2}}\\
				&\quad\geq\frac{B\sqrt{C_{\psi,M}}}{2}\Vert f\Vert^{2}_{L^{2}(\mathbb{R}^{2})}.  
			\end{split}
			\label{66}
		\end{align} 
		The proof is completed.
	\end{proof}
	\begin{corollary}\label{corollary6} 
		It can be observed that for $M=\left [ \begin{matrix}
			0& 1 \\
			-1& 0 \\
		\end{matrix} \right ]$, Theorem \ref{theorem10} will be reduced to
		\begin{align}
			\begin{split}
				&\left\lbrace \int_{0}^{2\pi}\!\!\int_{0}^{+\infty}\!\!\int_{\mathbb{R}^{2}}\boldsymbol{b}^{2}\bigg|W_{f}(\boldsymbol{b},a,\theta)\bigg|^{2}\frac{{\rm{d}}\boldsymbol{b}{\rm{d}}a{\rm{d}}\theta}{a^{3}}\right\rbrace^{\frac{1}{2}}\left\lbrace \int_{\mathbb{R}^{2}}\boldsymbol{\xi}^{2}\bigg|\hat{f} (\boldsymbol{\xi})\bigg|^{2}{\rm{d}}\boldsymbol{\xi}\right\rbrace ^{\frac{1}{2}}\!\!\geq\!\!\frac{\sqrt{C_{\psi}}}{2}\Vert f\Vert^{2}_{L^{2}(\mathbb{R}^{2})},  
			\end{split}
			\label{601}
		\end{align} 
		where $C_{\psi}$ is the admissibility condition of the PWT \cite{ref23}, $W_{f}$ denotes the PWT of the functions $f$, and $\hat{f}$ denotes the FT of the functions $f$. 
	\end{corollary}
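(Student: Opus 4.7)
The plan is to specialize Theorem \ref{theorem10} to the matrix $M = (0, 1; -1, 0)$ and verify that each object on both sides of the inequality collapses to its Fourier-wavelet counterpart. For these parameters we have $A = 0$, $B = 1$, $C = -1$, $D = 0$, so by Definition \ref{2} the kernel (\ref{7}) reduces to $\frac{1}{2\pi}e^{-j\boldsymbol{t}\cdot\boldsymbol{\xi}}$, identifying $\mathscr{L}^{M}$ (up to the standard normalization fixed in the paper's convention for $\hat{f}$) with the ordinary Fourier transform.

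Since $A/(2B) = 0$, the chirp factor $e^{-j\frac{A}{2B}(\boldsymbol{t}^{2}-\boldsymbol{b}^{2})}$ in (\ref{11}) equals $1$, so the transform kernel $\psi^{M}_{\boldsymbol{b},a,\theta}$ coincides with the polar wavelet atom $\psi_{\boldsymbol{b},a,\theta}$, and the PLCWT $W^{M}_{f}$ reduces to the PWT $W_{f}$, exactly as already remarked at the end of Definition \ref{3}. Likewise, the factor $e^{-j\frac{A}{2B}(\cdot)^{2}}$ inside the admissibility integral (\ref{12}) becomes $1$, so $\mathscr{L}^{M}\{e^{-j\frac{A}{2B}(\cdot)^{2}}\psi\}(aR_{-\theta}\boldsymbol{\xi})$ collapses to $\mathscr{F}[\psi](aR_{-\theta}\boldsymbol{\xi})$, and consequently $C_{\psi,M}$ reduces to $C_{\psi}$ as given in (\ref{3}).

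I would then substitute these identifications into the conclusion (\ref{60}) of Theorem \ref{theorem10}. On the left-hand side, $W^{M}_{f}$ becomes $W_{f}$ in the first factor, and $\mathscr{L}^{M}\{f\}$ becomes $\hat{f}$ in the second factor, recovering precisely the two integrals in (\ref{601}). On the right-hand side, using $B = 1$ and $C_{\psi,M} = C_{\psi}$, the constant $\frac{B\sqrt{C_{\psi,M}}}{2}$ simplifies to $\frac{\sqrt{C_{\psi}}}{2}$, which matches (\ref{601}) exactly. Hence the corollary is an immediate specialization and no new estimate is needed.

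The only real obstacle is bookkeeping consistency between the $\frac{1}{2\pi B}$ normalization fixed in (\ref{7}) and whichever convention for $\hat{f}$ is in force in the corollary's statement and in the cited reference \cite{ref23} for $C_{\psi}$. If both conventions already absorb the $\frac{1}{2\pi}$ factor identically, the substitution is exact; if not, one multiplicative constant must be tracked through the Heisenberg bound, but because it multiplies both the FT integral and $C_{\psi}$ in a compatible way, the structure of the inequality is preserved and the stated form (\ref{601}) still follows.
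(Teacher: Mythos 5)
Your proposal is correct and matches the paper's (implicit) argument: the corollary is stated as an immediate specialization of Theorem \ref{theorem10}, obtained exactly as you describe by setting $A=0$, $B=1$ so that the chirp factors vanish, $W^{M}_{f}$ reduces to $W_{f}$, $C_{\psi,M}$ reduces to $C_{\psi}$, $\mathscr{L}^{M}$ becomes the Fourier transform, and the constant $\frac{B\sqrt{C_{\psi,M}}}{2}$ becomes $\frac{\sqrt{C_{\psi}}}{2}$. Your remark about tracking the $\frac{1}{2\pi}$ normalization is a sensible caveat but does not change the argument.
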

	\subsection{Generalized Heisenberg’s Uncertainty Principle}
	A generalization of the Heisenberg’s inequality for the PLCWT is presented in the following theorem.
	\begin{theorem}\label{theorem11}
		Let $\psi\in L^{2}(\mathbb{R}^{2})$ be an admissible polar mother wavelet and $1\leq p\leq2$. Then, for any $f\in L^{2}(\mathbb{R}^{2})$, we have
		\begin{align}
			\begin{split}
				&\left\lbrace \int_{0}^{2\pi}\!\!\int_{0}^{+\infty}\!\!\int_{\mathbb{R}^{2}}\boldsymbol{b}^{p}\bigg|W^{M}_{f}(\boldsymbol{b},a,\theta)\bigg|^{p}\frac{{\rm{d}}\boldsymbol{b}{\rm{d}}a{\rm{d}}\theta}{a^{3}}\right\rbrace^{\frac{1}{p}}\left\lbrace \int_{\mathbb{R}^{2}}\boldsymbol{\xi}^{p}\bigg|\mathscr{L}^{M}\left\lbrace f\right\rbrace (\boldsymbol{\xi})\bigg|^{p}{\rm{d}}\boldsymbol{\xi}
				\right\rbrace ^{\frac{1}{p}}\\
				&\qquad\qquad\qquad\qquad\qquad\qquad\qquad\qquad\geq\frac{a^{\frac{3}{p}-\frac{3}{2}}B^{\frac{1}{p}+\frac{1}{2}}}{2}\sqrt{C_{\psi,M}}\Vert f\Vert^{2}_{L^{2}(\mathbb{R}^{2})},  
			\end{split}
			\label{67}
		\end{align} 
		where $C_{\psi,M}$ is give by (\ref{12}).
	\end{theorem}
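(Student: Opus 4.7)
The plan is to mirror the proof of Theorem 10, only replacing the $L^{2}$-Heisenberg inequality for the LCT by its $L^{p}$ analogue and being careful about how the weighted $(a,\theta)$-integration interacts with the $L^{p}$ norms. Concretely, I would first invoke the generalized Heisenberg inequality for the two-dimensional LCT (valid for $1\le p\le 2$), which reads
\begin{align*}
\left\{\int_{\mathbb{R}^{2}}\boldsymbol{b}^{p}|g(\boldsymbol{b})|^{p}{\rm d}\boldsymbol{b}\right\}^{\frac{1}{p}}\!\!\left\{\int_{\mathbb{R}^{2}}\boldsymbol{\xi}^{p}|\mathscr{L}^{M}\{g\}(\boldsymbol{\xi})|^{p}{\rm d}\boldsymbol{\xi}\right\}^{\frac{1}{p}}\ge\frac{B}{2}\|g\|_{L^{p}}^{2},
\end{align*}
and apply it with $g(\boldsymbol{b})=W^{M}_{f}(\boldsymbol{b},a,\theta)$ at each fixed $(a,\theta)$. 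This produces a pointwise-in-$(a,\theta)$ inequality of exactly the same shape as (62) but with $p$ in place of $2$.

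Next, I would integrate both sides against the measure $\frac{{\rm d}a\,{\rm d}\theta}{a^{3}}$ and use Minkowski's integral inequality (twice, once on each factor) to pull the $(a,\theta)$-integration inside the two $L^{p}$ brackets on the left, producing the triple integrals written in the theorem statement. On the frequency side, I would then invoke Lemma \ref{lemma5} verbatim as in (65): substituting the closed-form expression for $\mathscr{L}^{M}\{W^{M}_{f}(\cdot,a,\theta)\}(\boldsymbol{\xi})$ and performing the change of variable $\boldsymbol{\eta}=aR_{-\theta}\boldsymbol{\xi}$ collapses the $(a,\theta)$-integral of $|\mathscr{L}^{M}\{e^{-j\frac{A}{2B}(\cdot)^{2}}\psi\}(aR_{-\theta}\boldsymbol{\xi})|^{p}$ into the admissibility constant $C_{\psi,M}$ (up to tracking the Jacobian factors). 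On the physical side, after Minkowski one controls $\int\!\int\!\int|W^{M}_{f}|^{2}\frac{{\rm d}\boldsymbol{b}\,{\rm d}a\,{\rm d}\theta}{a^{3}}$ by Property \ref{property6}, which supplies the $C_{\psi,M}\|f\|_{L^{2}}^{2}$ appearing on the right.

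The delicate part, and the main obstacle, will be the bookkeeping of constants. The factor $a^{\frac{3}{p}-\frac{3}{2}}$ has to arise from the interplay among three sources: the measure $a^{-3}$, the factor $a$ produced by Lemma \ref{lemma5}, and the Jacobian $a^{2}$ of the substitution $\boldsymbol{\eta}=aR_{-\theta}\boldsymbol{\xi}$, all raised through Hölder/Minkowski with exponent $p$ rather than $2$. Similarly, the power $B^{\frac{1}{p}+\frac{1}{2}}$ must be split between the constant $B/2$ coming from the pointwise $L^{p}$-Heisenberg step and the constant $\sqrt{C_{\psi,M}}$ absorbing the admissibility identity. As a sanity check, at $p=2$ one has $a^{\frac{3}{p}-\frac{3}{2}}=1$ and $B^{\frac{1}{p}+\frac{1}{2}}=B$, so the inequality collapses exactly to Theorem \ref{theorem10}; this checkpoint will guide the correct distribution of powers throughout the argument.
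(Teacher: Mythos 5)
Your overall strategy is the same as the paper's: start from the generalized ($L^p$) Heisenberg inequality for the two\nobreakdash-dimensional LCT applied pointwise in $(a,\theta)$ to $W^M_f(\cdot,a,\theta)$, integrate against ${\rm d}a\,{\rm d}\theta/a^{3}$, use Lemma \ref{lemma5} together with the admissibility condition to collapse the frequency-side triple integral, and use the energy relation (Property \ref{property6}) to produce $C_{\psi,M}\Vert f\Vert^{2}_{L^{2}}$. However, there is a concrete error in your key input. The inequality the paper actually invokes, (\ref{68}), reads
\begin{align*}
\left\lbrace \int_{\mathbb{R}^{2}}\boldsymbol{b}^{p}\big|g(\boldsymbol{b})\big|^{p}{\rm{d}}\boldsymbol{b}\right\rbrace ^{\frac{1}{p}}\left\lbrace \int_{\mathbb{R}^{2}}\boldsymbol{\xi}^{p}\big|\mathscr{L}^{M}\left\lbrace g\right\rbrace (\boldsymbol{\xi})\big|^{p}{\rm{d}}\boldsymbol{\xi}\right\rbrace ^{\frac{1}{p}}\geq\frac{B^{\frac{1}{p}+\frac{1}{2}}}{2}\int_{\mathbb{R}^{2}}\big|g(\boldsymbol{b})\big|^{2}{\rm{d}}\boldsymbol{b},
\end{align*}
i.e.\ the constant is $B^{\frac{1}{p}+\frac{1}{2}}/2$ and the right-hand side is the \emph{squared $L^{2}$ norm}, not $\Vert g\Vert_{L^{p}}^{2}$ with constant $B/2$ as you wrote. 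Both discrepancies matter: the entire factor $B^{\frac{1}{p}+\frac{1}{2}}$ in the conclusion comes from this single inequality (it is not ``split'' with $\sqrt{C_{\psi,M}}$, which carries no power of $B$), and the $L^{2}$ norm on the right is indispensable because after integrating in $(a,\theta)$ it is $\int\!\!\int\!\!\int|W^M_f|^{2}\,{\rm d}\boldsymbol{b}\,{\rm d}a\,{\rm d}\theta/a^{3}$ that Property \ref{property6} evaluates; there is no Plancherel-type identity for $\Vert W^M_f(\cdot,a,\theta)\Vert_{L^{p}}^{2}$, so your version leaves you unable to produce $\Vert f\Vert_{L^{2}}^{2}$.

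Two further points on the mechanics. First, Minkowski's integral inequality goes the wrong way for what you want: it bounds the $L^{p}$ norm of an integral \emph{above} by the integral of $L^{p}$ norms, whereas you need the product of the two triple-integral brackets to dominate the $(a,\theta)$-integral of the products of single-integral brackets. The paper instead handles the frequency factor by inverting the estimate (\ref{71}) (obtained from Lemma \ref{lemma5}) and then applies a Schwarz-type step to the remaining product. Second, for $p\neq 2$ the quantity $\int_{0}^{2\pi}\!\int_{0}^{\infty}|\mathscr{L}^{M}\{e^{-j\frac{A}{2B}(\cdot)^{2}}\psi\}(aR_{-\theta}\boldsymbol{\xi})|^{p}\,{\rm d}a\,{\rm d}\theta/a$ is not the admissibility constant $C_{\psi,M}$, which is defined with the square; an additional step (the paper raises to the power $p/2$ in (\ref{71})) is needed to relate the $p$-th power integral to $C_{\psi,M}^{p/2}$, and this is also where the factor $a^{\frac{3p}{2}-3}$ (hence $a^{\frac{3}{p}-\frac{3}{2}}$ after taking $1/p$ powers) is generated. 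Your sketch glosses over this with ``up to tracking the Jacobian factors,'' but it is precisely this step that produces the scale-dependent constant in the statement.
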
	
	\begin{proof}
		The generalization of the Heisenberg's inequality associated with the LCT is given by \cite{ref19,ref43}
		\begin{align}
			\begin{split}
				\left\lbrace \int_{\mathbb{R}^{2}}\boldsymbol{b}^{p}\bigg|f(\boldsymbol{b})\bigg|^{p}{\rm{d}}\boldsymbol{b}\right\rbrace ^{\frac{1}{p}}&\left\lbrace \int_{\mathbb{R}^{2}}\boldsymbol{\xi}^{p}\bigg|\mathscr{L}^{M}\left\lbrace f\right\rbrace (\boldsymbol{\xi})\bigg|^{p}{\rm{d}}\boldsymbol{\xi}\right\rbrace ^{\frac{1}{p}}\geq\frac{B^{\frac{1}{p}+\frac{1}{2}}}{2}\int_{\mathbb{R}^{2}}\bigg|f(\boldsymbol{b})\bigg|^{2}{\rm{d}}\boldsymbol{b},  
			\end{split}
			\label{68}
		\end{align} 
		where $1\leq p\leq2$.\\
		Substituting $f(\boldsymbol{b})$ with $W^{M}_{f}(\boldsymbol{b},a,\theta)$ and integrating with respect to the measure ${\rm{d}}\boldsymbol{a}{\rm{d}}\boldsymbol{\theta}/a^{3}$, we have
		\begin{align}
			\begin{split}
				&\left\lbrace \int_{0}^{2\pi}\!\!\int_{0}^{+\infty}\!\!\int_{\mathbb{R}^{2}}\!\!\boldsymbol{b}^{p}\bigg|W^{M}_{f}(\boldsymbol{b},a,\theta)\bigg|^{p}\frac{{\rm{d}}\boldsymbol{b}{\rm{d}}\boldsymbol{a}{\rm{d}}\boldsymbol{\theta}}{a^{3}}\right\rbrace ^{\frac{1}{p}}\\
				&\qquad \left\lbrace \!\!\int_{0}^{2\pi}\!\!\int_{0}^{+\infty}\!\!\int_{\mathbb{R}^{2}}\boldsymbol{\xi}^{p}\bigg|\mathscr{L}^{M}\left\lbrace W^{M}_{f}(\boldsymbol{b},a,\theta)\right\rbrace (\boldsymbol{\xi})\bigg|^{p}\frac{{\rm{d}}\boldsymbol{\xi}{\rm{d}}\boldsymbol{a}{\rm{d}}\boldsymbol{\theta}}{a^{3}}\right\rbrace^{\frac{1}{p}}\\
				&\qquad\qquad\qquad\geq\frac{B^{\frac{1}{p}+\frac{1}{2}}}{2}C_{\psi,M}\Vert f\Vert^{2}_{L^{2}(\mathbb{R}^{2})}.
			\end{split}
			\label{69}
		\end{align} 
		By Lemma \ref{lemma5}, we get
		\begin{align}
			\begin{split}
				&\quad \,\, \int_{0}^{2\pi}\int_{0}^{+\infty}\int_{\mathbb{R}^{2}}\boldsymbol{\xi}^{p}\bigg|\mathscr{L}^{M}\left\lbrace W^{M}_{f}(\boldsymbol{b},a,\theta)\right\rbrace (\boldsymbol{\xi})\bigg|^{p}\frac{{\rm{d}}\boldsymbol{\xi}{\rm{d}}\boldsymbol{a}{\rm{d}}\boldsymbol{\theta}}{a^{3}}\\
				&=\int_{0}^{2\pi}\int_{0}^{+\infty}\int_{\mathbb{R}^{2}}\boldsymbol{\xi}^{p}\bigg|ae^{j\frac{D}{2B}(a R_{-\theta}\boldsymbol{\xi})^{2}}\mathscr{L}^{M}\left\lbrace f\right\rbrace (\boldsymbol{\xi})\\
				&\quad \times\mathscr{L}^{M}\left\lbrace e^{-j \frac{A}{2B}(\cdot)^{2}}\psi \right\rbrace (aR_{-\theta}\boldsymbol{\xi})\bigg|^{p}\frac{{\rm{d}}\boldsymbol{\xi}{\rm{d}}\boldsymbol{a}{\rm{d}}\boldsymbol{\theta}}{a^{3}}\\
				&=a^{p-3}\int_{0}^{2\pi}\int_{0}^{+\infty}\int_{\mathbb{R}^{2}}\boldsymbol{\xi}^{p}\bigg|\mathscr{L}^{M}\left\lbrace e^{-j \frac{A}{2B}(\cdot)^{2}}\psi \right\rbrace (aR_{-\theta}\boldsymbol{\xi})\bigg|^{p}\\
				&\quad \times\bigg|\mathscr{L}^{M}\left\lbrace f\right\rbrace (\boldsymbol{\xi})\bigg|^{p}{\rm{d}}\boldsymbol{\xi}{\rm{d}}\boldsymbol{a}{\rm{d}}\boldsymbol{\theta}.
			\end{split}
			\label{70}
		\end{align} 
		According to (\ref{12}), (\ref{70}) can be written as
		\begin{align}
			\begin{split}
				&\quad \int_{0}^{2\pi}\int_{0}^{+\infty}\int_{\mathbb{R}^{2}}\boldsymbol{\xi}^{p}\bigg|\mathscr{L}^{M}\left\lbrace W^{M}_{f}(\boldsymbol{b},a,\theta)\right\rbrace (\boldsymbol{\xi})\bigg|^{p}\frac{{\rm{d}}\boldsymbol{\xi}{\rm{d}}\boldsymbol{a}{\rm{d}}\boldsymbol{\theta}}{a^{3}}\\
				&\leq a^{\frac{3p}{2}-3}\left\lbrace \int_{0}^{2\pi}\!\!\int_{0}^{+\infty}\!\!\bigg|\mathscr{L}^{M}\!\left\lbrace e^{-j \frac{A}{2B}(\cdot)^{2}}\psi \right\rbrace\! (aR_{\!-\theta}\boldsymbol{\xi})\bigg|^{p}\frac{{\rm{d}}\boldsymbol{a}{\rm{d}}\boldsymbol{\theta}}{a}\right\rbrace ^{\frac{p}{2}}\\
				&\quad\times\left\lbrace \int_{\mathbb{R}^{2}}\boldsymbol{\xi}^{p}\bigg|\mathscr{L}^{M}\left\lbrace f\right\rbrace (\boldsymbol{\xi})\bigg|^{p}{\rm{d}}\boldsymbol{\xi}\right\rbrace\\
				&=a^{\frac{3p}{2}-3}C_{\psi,M}^{\frac{p}{2}}\int_{0}^{+\infty}\boldsymbol{\xi}^{p}\bigg|\mathscr{L}^{M}\left\lbrace f\right\rbrace (\boldsymbol{\xi})\bigg|^{p}{\rm{d}}\boldsymbol{\xi}.
			\end{split}
			\label{71}
		\end{align} 
		Multiplying (\ref{69}), (\ref{70}), (\ref{71}), and Schwartz's inequality, we get
		\begin{align}
			\begin{split}
				&\quad \left\lbrace \int_{0}^{2\pi}\!\!\int_{0}^{+\infty}\!\!\int_{\mathbb{R}^{2}}\!\!\boldsymbol{b}^{p}\bigg|W^{M}_{f}(\boldsymbol{b},a,\theta)\bigg|^{p}\frac{{\rm{d}}\boldsymbol{b}{\rm{d}}a{\rm{d}}\theta}{a^{3}}\right\rbrace^{\frac{1}{p}}\left\lbrace \int_{\mathbb{R}^{2}}\boldsymbol{\xi}^{p}\bigg|\mathscr{L}^{M}\left\lbrace f\right\rbrace (\boldsymbol{\xi})\bigg|^{p}{\rm{d}}\boldsymbol{\xi}
				\right\rbrace ^{\frac{1}{p}}\\
				&\geq \frac{a^{-\frac{3}{2}+\frac{3}{p}}}{\sqrt{C_{\psi,M}}}\left\lbrace \int_{0}^{2\pi}\int_{0}^{+\infty}\int_{\mathbb{R}^{2}}\boldsymbol{b}^{p}\bigg|W^{M}_{f}(\boldsymbol{b},a,\theta)\bigg|^{p}\frac{{\rm{d}}\boldsymbol{b}{\rm{d}}a{\rm{d}}\theta}{a^{3}}\right\rbrace^{\frac{1}{p}}\\
				&\quad\times \left\lbrace\!\int_{0}^{2\pi}\!\!\!\int_{0}^{+\infty}\!\!\!\int_{\mathbb{R}^{2}}\!\!\boldsymbol{\xi}^{p}\!\bigg|\mathscr{L}^{M}\left\lbrace W^{M}_{f}(\boldsymbol{b},a,\theta)\right\rbrace (\boldsymbol{\xi})\bigg|^{p}\frac{{\rm{d}}\boldsymbol{\xi}{\rm{d}}\boldsymbol{a}{\rm{d}}\boldsymbol{\theta}}{a^{3}} \!\right\rbrace ^{\frac{1}{p}}\\
				&\geq\frac{a^{-\frac{3}{2}+\frac{3}{p}}}{\sqrt{C_{\psi,M}}}\int_{0}^{2\pi}\int_{0}^{+\infty}\left\lbrace \int_{\mathbb{R}^{2}}\boldsymbol{b}^{p}\bigg|W^{M}_{f}(\boldsymbol{b},a,\theta)\bigg|^{p}{\rm{d}}\boldsymbol{b}\right\rbrace^{\frac{1}{p}}\\
				&\quad\times\left\lbrace\int_{\mathbb{R}^{2}}\boldsymbol{\xi}^{p}\bigg|\mathscr{L}^{M}\left\lbrace W^{M}_{f}(\boldsymbol{b},a,\theta)\right\rbrace (\boldsymbol{\xi})\bigg|^{p}{\rm{d}}\boldsymbol{\xi} \right\rbrace ^{\frac{1}{p}}\frac{{\rm{d}}\boldsymbol{a}{\rm{d}}\boldsymbol{\theta}}{a^{3}}\\
				&\geq \frac{a^{-\frac{3}{2}+\frac{3}{p}}}{\sqrt{C_{\psi,M}}}\cdot \frac{B^{\frac{1}{p}+\frac{1}{2}}}{2}C_{\psi,M}\Vert f\Vert^{2}_{L^{2}(\mathbb{R}^{2})}\\
				&=\frac{a^{\frac{3}{p}-\frac{3}{2}}B^{\frac{1}{p}+\frac{1}{2}}}{2}\sqrt{C_{\psi,M}}\Vert f\Vert^{2}_{L^{2}(\mathbb{R}^{2})}.
			\end{split}
			\label{72}
		\end{align} 
		The proof is completed.
	\end{proof}
	\begin{corollary}\label{corollary7} 
		When $p=2$, Theorem \ref{theorem11} boils down to Theorem \ref{theorem10}. This proves that Theorem \ref{theorem11} is a generalization of Theorem \ref{theorem10}.
	\end{corollary}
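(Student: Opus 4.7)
The plan is to verify the corollary by direct substitution: set $p=2$ in the generalized inequality of Theorem \ref{theorem11} and simplify the exponents appearing in the constant on the right-hand side. Since the corollary asserts nothing beyond a specialization, no new analytic machinery is needed; the check is purely algebraic.

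First, I would examine the left-hand side of Theorem \ref{theorem11}. With $p=2$, the first factor becomes
\begin{align*}
\left\lbrace \int_{0}^{2\pi}\!\!\int_{0}^{+\infty}\!\!\int_{\mathbb{R}^{2}}\boldsymbol{b}^{2}\bigg|W^{M}_{f}(\boldsymbol{b},a,\theta)\bigg|^{2}\frac{{\rm{d}}\boldsymbol{b}{\rm{d}}a{\rm{d}}\theta}{a^{3}}\right\rbrace^{\frac{1}{2}},
\end{align*}
and the second factor becomes $\left\lbrace \int_{\mathbb{R}^{2}}\boldsymbol{\xi}^{2}|\mathscr{L}^{M}\{f\}(\boldsymbol{\xi})|^{2}{\rm{d}}\boldsymbol{\xi}\right\rbrace^{\frac{1}{2}}$. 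These are exactly the two factors appearing on the left of Theorem \ref{theorem10}, so the left-hand sides coincide.

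Next I would simplify the constant on the right-hand side of Theorem \ref{theorem11} at $p=2$. The exponent of $a$ is $\frac{3}{p}-\frac{3}{2}=\frac{3}{2}-\frac{3}{2}=0$, so $a^{3/p-3/2}=1$, and the exponent of $B$ is $\frac{1}{p}+\frac{1}{2}=\frac{1}{2}+\frac{1}{2}=1$, so $B^{1/p+1/2}=B$. Hence the right-hand side reduces to
\begin{align*}
\frac{B\sqrt{C_{\psi,M}}}{2}\Vert f\Vert^{2}_{L^{2}(\mathbb{R}^{2})},
\end{align*}
which matches the bound in Theorem \ref{theorem10} exactly. Concluding, the inequality of Theorem \ref{theorem11} with $p=2$ is identical to that of Theorem \ref{theorem10}, which proves the corollary.

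There is no real obstacle here; the only thing worth double-checking is that the constant on the right of Theorem \ref{theorem11} is genuinely independent of $a$ when $p=2$ (which is what makes the specialization nontrivial to state, since for general $p$ the scale variable $a$ appears explicitly). Once the exponent arithmetic $\frac{3}{p}-\frac{3}{2}=0$ and $\frac{1}{p}+\frac{1}{2}=1$ at $p=2$ is noted, the identification is immediate.
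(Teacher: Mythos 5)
Your verification is correct and is exactly the check the paper intends (the paper states this corollary without proof, as the substitution $p=2$ giving $a^{3/p-3/2}=a^{0}=1$ and $B^{1/p+1/2}=B$ is immediate). Nothing is missing.
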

	\subsection{Logarithmic Uncertainty Principle}
	Here, we derive the logarithmic uncertainty principle for the PLCWT as follow.
	\begin{theorem}\label{theorem12}
		Let $\psi\in L^{2}(\mathbb{R}^{2})$ be an admissible polar mother wavelet. Then, for any $f\in L^{2}(\mathbb{R}^{2})$ such that $W^{M}_{f}(\boldsymbol{b},a,\theta)\in L^{2}(\mathbb{R}^{2})$, we have
		\begin{align}
			\begin{split}
				&\int_{0}^{2\pi}\int_{0}^{+\infty}\int_{\mathbb{R}^{2}}\ln|\boldsymbol{b}|\,\bigg|W^{M}_{f}(\boldsymbol{b},a,\theta)\bigg|^{2}\frac{{\rm{d}}\boldsymbol{b}{\rm{d}}a{\rm{d}}\theta}{a^{3}}+\int_{\mathbb{R}^{2}}\ln|\boldsymbol{b}|\,\bigg|\mathscr{L}^{M}\left\lbrace f\right\rbrace (\boldsymbol{\xi})\bigg|^{p}{\rm{d}}\boldsymbol{\xi}\\ &\qquad\qquad\qquad\qquad\qquad\qquad\qquad\qquad\qquad \geq \left(\mu+\ln B \right) C_{\psi,M}\Vert f\Vert^{2}_{L^{2}(\mathbb{R}^{2})},
			\end{split}
			\label{73}
		\end{align} 
		where $\mu=\chi(\frac{1}{2})-\ln\pi$, $\chi(z)=\frac{\rm{d}}{{\rm{d}}z}\ln\left[ \Gamma(z)\right]$, $\Gamma(\cdot)$ is the gamma function, and $C_{\psi,M}$ is give by (\ref{12}).
	\end{theorem}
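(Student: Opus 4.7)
The plan is to mimic the proof structure already used in Theorems \ref{theorem10} and \ref{theorem11}, namely: start from the classical logarithmic uncertainty principle for the LCT, substitute $W^{M}_{f}(\boldsymbol{b},a,\theta)$ (viewed as a function of $\boldsymbol{b}$ with $a,\theta$ held fixed) for the argument, integrate over the remaining parameters against the measure $\mathrm{d}a\,\mathrm{d}\theta/a^{3}$, and then collapse everything using Lemma~\ref{lemma5}, the admissibility condition (\ref{12}), and the energy formula in Property~\ref{property6}.

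Concretely, I would first invoke the logarithmic uncertainty inequality for the LCT, which for any $g\in L^{2}(\mathbb{R}^{2})$ reads
\begin{align*}
\int_{\mathbb{R}^{2}}\ln|\boldsymbol{b}|\,|g(\boldsymbol{b})|^{2}\mathrm{d}\boldsymbol{b}
+\int_{\mathbb{R}^{2}}\ln|\boldsymbol{\xi}|\,|\mathscr{L}^{M}\{g\}(\boldsymbol{\xi})|^{2}\mathrm{d}\boldsymbol{\xi}
\geq (\mu+\ln B)\int_{\mathbb{R}^{2}}|g(\boldsymbol{b})|^{2}\mathrm{d}\boldsymbol{b},
\end{align*}
with $\mu=\chi(1/2)-\ln\pi$. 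Setting $g(\boldsymbol{b})=W^{M}_{f}(\boldsymbol{b},a,\theta)$ and integrating both sides against $\mathrm{d}a\,\mathrm{d}\theta/a^{3}$ over $(0,+\infty)\times[0,2\pi]$ produces three multi-integrals. The first is already in the desired form. For the right-hand side, Property~\ref{property6} immediately evaluates the triple integral of $|W^{M}_{f}|^{2}$ to $C_{\psi,M}\|f\|^{2}_{L^{2}(\mathbb{R}^{2})}$, so the lower bound $(\mu+\ln B)C_{\psi,M}\|f\|^{2}_{L^{2}(\mathbb{R}^{2})}$ drops out.

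The key computation is the middle term, where I must show
\begin{align*}
\int_{0}^{2\pi}\!\!\int_{0}^{+\infty}\!\!\int_{\mathbb{R}^{2}}\ln|\boldsymbol{\xi}|\,
|\mathscr{L}^{M}\{W^{M}_{f}(\boldsymbol{b},a,\theta)\}(\boldsymbol{\xi})|^{2}\,
\frac{\mathrm{d}\boldsymbol{\xi}\,\mathrm{d}a\,\mathrm{d}\theta}{a^{3}}
=C_{\psi,M}\int_{\mathbb{R}^{2}}\ln|\boldsymbol{\xi}|\,|\mathscr{L}^{M}\{f\}(\boldsymbol{\xi})|^{2}\mathrm{d}\boldsymbol{\xi}.
\end{align*}
To do this I substitute Lemma~\ref{lemma5}, whose modulus squared is
$a^{2}|\mathscr{L}^{M}\{e^{-j\frac{A}{2B}(\cdot)^{2}}\psi\}(aR_{-\theta}\boldsymbol{\xi})|^{2}|\mathscr{L}^{M}\{f\}(\boldsymbol{\xi})|^{2}$, so the $a^{2}$ cancels the $a^{-3}$ down to $a^{-1}$. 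Applying Fubini to move the $\mathrm{d}a\,\mathrm{d}\theta/a$ inside and then invoking the admissibility condition (\ref{12}) yields exactly the factor $C_{\psi,M}$, leaving the integral in $\boldsymbol{\xi}$ alone. Combining everything gives (\ref{73}).

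The main obstacle will be justifying the Fubini exchange used to push $\mathrm{d}a\,\mathrm{d}\theta/a$ past the logarithmic weight $\ln|\boldsymbol{\xi}|$, since $\ln|\boldsymbol{\xi}|$ is not integrable near $0$ or $\infty$. This is handled in the usual way by truncating $|\boldsymbol{\xi}|$ into a compact annulus, applying the inequality and Fubini there where everything is finite, and then passing to the limit using monotone/dominated convergence together with the finiteness of the energy integral furnished by Property~\ref{property6}; the corollary $p=2$ case of Theorem~\ref{theorem11} further ensures that the positive and negative parts of $\ln|\boldsymbol{\xi}|\,|\mathscr{L}^{M}\{f\}(\boldsymbol{\xi})|^{2}$ are controlled. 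Once this measure-theoretic step is in place, the rest of the argument is routine bookkeeping already mirrored in (\ref{65}).
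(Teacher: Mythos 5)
Your proposal follows essentially the same route as the paper's own proof: invoke the logarithmic uncertainty principle for the LCT, substitute $W^{M}_{f}(\cdot,a,\theta)$, integrate against ${\rm{d}}a\,{\rm{d}}\theta/a^{3}$, evaluate the right-hand side via Property \ref{property6}, and collapse the middle term to $C_{\psi,M}\int_{\mathbb{R}^{2}}\ln|\boldsymbol{\xi}|\,|\mathscr{L}^{M}\{f\}(\boldsymbol{\xi})|^{2}{\rm{d}}\boldsymbol{\xi}$ using Lemma \ref{lemma5} and the admissibility condition, exactly as in (\ref{74})--(\ref{77}); your added care about the Fubini exchange is a refinement the paper omits. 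Note only that this derivation (yours and the paper's alike) actually yields the constant $C_{\psi,M}$ multiplying the second term on the left of (\ref{73}), so the stated inequality appears to have dropped that factor.
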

	\begin{proof}
		By the logarithmic uncertainty uncertainty for the LCT \cite{ref19,ref43}, we get
		\begin{align}
			\begin{split}
				&\int_{\mathbb{R}^{2}}\ln|\boldsymbol{b}|\,\bigg|f(\boldsymbol{b})\bigg|^{2}{\rm{d}}\boldsymbol{b}+ \int_{\mathbb{R}^{2}}\ln|\boldsymbol{\xi}|\,\bigg|\mathscr{L}^{M}\left\lbrace f\right\rbrace (\boldsymbol{\xi})\bigg|^{2}{\rm{d}}\boldsymbol{\xi}\geq\left(\mu+\ln B \right)\int_{\mathbb{R}^{2}}\bigg|f(\boldsymbol{b})\bigg|^{2}{\rm{d}}\boldsymbol{b}.\end{split}
			\label{74}
		\end{align} 
		Replacing $f$ with $W_{f}^{M}$ of (\ref{74}), we obtain 
		\begin{align}
			\begin{split}
				&\int_{\mathbb{R}^{2}}\ln|\boldsymbol{b}|\,\bigg|W^{M}_{f}(\boldsymbol{b},a,\theta)\bigg|^{2}{\rm{d}}\boldsymbol{b}+\int_{\mathbb{R}^{2}}\ln|\boldsymbol{\xi}|\,\bigg|\mathscr{L}^{M}\left\lbrace W^{M}_{f}(\boldsymbol{b},a,\theta)\right\rbrace (\boldsymbol{\xi})\bigg|^{2}{\rm{d}}\boldsymbol{\xi}\\
				&\qquad\qquad\qquad\qquad\qquad\qquad\quad\, \geq\left(\mu+\ln B \right)\int_{\mathbb{R}^{2}}\bigg|W^{M}_{f}(\boldsymbol{b},a,\theta)\bigg|^{2}{\rm{d}}\boldsymbol{b}.  
			\end{split}
			\label{75}
		\end{align} 
		Integrating the above inequality, we have
		\begin{align}
			\begin{split}
				&\quad\,\int_{0}^{2\pi}\int_{0}^{+\infty}\int_{\mathbb{R}^{2}}\!\!\ln|\boldsymbol{b}|\,\bigg|W^{M}_{f}(\boldsymbol{b},a,\theta)\bigg|^{2}\frac{{\rm{d}}\boldsymbol{b}{\rm{d}}a{\rm{d}}\theta}{a^{3}}\\
				&+\int_{0}^{2\pi}\int_{0}^{+\infty}\int_{\mathbb{R}^{2}}\!\!\ln|\boldsymbol{\xi}|\,\bigg|\mathscr{L}^{M}\left\lbrace W^{M}_{f}(\boldsymbol{b},a,\theta)\right\rbrace (\boldsymbol{\xi})\bigg|^{2}\frac{{\rm{d}}\boldsymbol{\xi}{\rm{d}}a{\rm{d}}\theta}{a^{3}}\\
				& \geq\left(\mu+\ln B \right)\int_{0}^{2\pi}\int_{0}^{+\infty}\int_{\mathbb{R}^{2}}\bigg|W^{M}_{f}(\boldsymbol{b},a,\theta)\bigg|^{2}\frac{{\rm{d}}\boldsymbol{\xi}{\rm{d}}a{\rm{d}}\theta}{a^{3}}\\
				&=\left(\mu+\ln B \right)C_{\psi,M}\Vert f\Vert^{2}_{L^{2}(\mathbb{R}^{2})}.  
			\end{split}
			\label{76}
		\end{align} 
		The second integral formula in (\ref{76}) can be written as
		\begin{align}
			\begin{split}
				&\quad \,\,\int_{0}^{2\pi}\int_{0}^{+\infty}\int_{\mathbb{R}^{2}}\!\!\ln|\boldsymbol{\xi}|\,\bigg|\mathscr{L}^{M}\left\lbrace W^{M}_{f}(\boldsymbol{b},a,\theta)\right\rbrace (\boldsymbol{\xi})\bigg|^{2}\frac{{\rm{d}}\boldsymbol{\xi}{\rm{d}}a{\rm{d}}\theta}{a^{3}} \\
				&=\int_{0}^{2\pi}\int_{0}^{+\infty}\int_{\mathbb{R}^{2}}\ln|\boldsymbol{\xi}|\,\bigg|\mathscr{L}^{M}\left\lbrace W^{M}_{f}(\boldsymbol{b},a,\theta)\right\rbrace (\boldsymbol{\xi})\bigg|\bigg|\overline{\mathscr{L}^{M}\left\lbrace W^{M}_{f}(\boldsymbol{b},a,\theta)\right\rbrace (\boldsymbol{\xi})}\bigg|\frac{{\rm{d}}\boldsymbol{\xi}{\rm{d}}a{\rm{d}}\theta}{a^{3}} \\
				&=\int_{0}^{2\pi}\int_{0}^{+\infty}\int_{\mathbb{R}^{2}}\ln|\boldsymbol{\xi}|\,\frac{{\rm{d}}\,\boldsymbol{\xi}\,{\rm{d}}\,a\,{\rm{d}}\,\theta}{a^{3}}\left\lbrace ae^{j\frac{D}{2B}(a R_{\!-\theta}\boldsymbol{\xi})^{2}}\!\overline{\mathscr{L}^{M}\!\left\lbrace e^{-j \frac{A}{2B}(\cdot)^{2}}\psi \right\rbrace\! (aR_{\!-\theta}\boldsymbol{\xi})}\mathscr{L}^{M}\!\left\lbrace f\right\rbrace\! (\boldsymbol{\xi}) \right\rbrace\\
				&\quad\times\left\lbrace \overline{ae^{j\frac{D}{2B}(a R_{\!-\theta}\boldsymbol{\xi})^{2}}\!\overline{\mathscr{L}^{M}\!\left\lbrace e^{-j \frac{A}{2B}(\cdot)^{2}}\psi \right\rbrace\! (aR_{\!-\theta}\boldsymbol{\xi})}\mathscr{L}^{M}\left\lbrace f\right\rbrace\! (\boldsymbol{\xi})} \right\rbrace \\
				&=\int_{0}^{2\pi}\!\!\int_{0}^{+\infty}\bigg|\mathscr{L}^{M}\left\lbrace e^{-j \frac{A}{2B}(\cdot)^{2}}\psi \right\rbrace (aR_{-\theta}\boldsymbol{\xi})\bigg|^{2}\frac{{\rm{d}}a{\rm{d}}\theta}{a}\int_{\mathbb{R}^{2}}\ln|\boldsymbol{\xi}|\,\bigg|\mathscr{L}^{M}\left\lbrace f\right\rbrace (\boldsymbol{\xi})\bigg|^{2}{\rm{d}}\boldsymbol{\xi}\\
				&=C_{\psi,M}\int_{\mathbb{R}^{2}}\ln|\boldsymbol{\xi}|\,\bigg|\mathscr{L}^{M}\left\lbrace f\right\rbrace (\boldsymbol{\xi})\bigg|^{2}{\rm{d}}\boldsymbol{\xi}.
			\end{split}
			\label{77}
		\end{align} 
		Using (\ref{75}) and (\ref{77}), we have
		\begin{align}
			\begin{split}
				&\int_{0}^{2\pi}\int_{0}^{+\infty}\int_{\mathbb{R}^{2}}\ln|\boldsymbol{b}|\,\bigg|W^{M}_{f}(\boldsymbol{b},a,\theta)\bigg|^{2}\frac{{\rm{d}}\boldsymbol{b}{\rm{d}}a{\rm{d}}\theta}{a^{3}}+\int_{\mathbb{R}^{2}}\ln|\boldsymbol{b}|\,\bigg|\mathscr{L}^{M}\left\lbrace f\right\rbrace (\boldsymbol{\xi})\bigg|^{p}{\rm{d}}\boldsymbol{\xi} \\
				&\qquad\qquad\qquad\qquad\qquad\qquad\qquad\qquad\quad \geq \left(\mu+\ln B \right) C_{\psi,M}\Vert f\Vert^{2}_{L^{2}(\mathbb{R}^{2})}.
			\end{split}
			\label{78}
		\end{align} 
		The proof is completed.
	\end{proof}
	\begin{corollary}\label{corollary8} 
		When $M=\left [ \begin{matrix}
			0& 1 \\
			-1& 0 \\
		\end{matrix} \right ]$, Theorem \ref{theorem12} becomes the logarithmic uncertainty of the PWT \cite{ref19,ref43}.
	\end{corollary}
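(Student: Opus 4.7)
The plan is to directly specialize the inequality in Theorem \ref{theorem12} by inserting the parameter choice $M=(A,B;C,D)=(0,1;-1,0)$ into every quantity that appears, and verify that each reduces to its Fourier/PWT counterpart. First I would observe that with $A=0$ and $B=1$, the chirp modulation $e^{-j\frac{A}{2B}(\cdot)^{2}}$ collapses to the constant $1$, so the kernel $\psi^{M}_{\boldsymbol{b},a,\theta}(\boldsymbol{t})$ from (\ref{11}) reduces to the ordinary polar wavelet $\psi_{\boldsymbol{b},a,\theta}(\boldsymbol{t})$ of (\ref{1}). Consequently the definition (\ref{10}) yields $W^{M}_{f}(\boldsymbol{b},a,\theta)=W_{f}(\boldsymbol{b},a,\theta)$, where $W_{f}$ is the standard PWT from Definition \ref{1}.

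Next I would note that for $M=(0,1;-1,0)$ the kernel (\ref{7}) becomes $h_{M}(\boldsymbol{t},\boldsymbol{\xi})=\frac{1}{2\pi}e^{-j\boldsymbol{t}\cdot\boldsymbol{\xi}}$, so the LCT $\mathscr{L}^{M}\{f\}$ coincides with the ordinary Fourier transform $\hat{f}$. Applying this to the admissibility constant $C_{\psi,M}$ in (\ref{12}), the absent chirp and the Fourier identification combine to give
\begin{align*}
C_{\psi,M}=\int_{0}^{\infty}\!\!\int_{0}^{2\pi}\bigl|\mathscr{F}[\psi](aR_{-\theta}\boldsymbol{\xi})\bigr|^{2}\frac{{\rm{d}}a\,{\rm{d}}\theta}{a}=C_{\psi},
\end{align*}
matching the PWT admissibility condition (\ref{3}).

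With these identifications in hand, I would substitute $B=1$ (so $\ln B=0$) and $C_{\psi,M}=C_{\psi}$ into (\ref{73}). The left-hand side becomes
\begin{align*}
\int_{0}^{2\pi}\!\!\int_{0}^{+\infty}\!\!\int_{\mathbb{R}^{2}}\ln|\boldsymbol{b}|\,\bigl|W_{f}(\boldsymbol{b},a,\theta)\bigr|^{2}\frac{{\rm{d}}\boldsymbol{b}\,{\rm{d}}a\,{\rm{d}}\theta}{a^{3}}+\int_{\mathbb{R}^{2}}\ln|\boldsymbol{\xi}|\,\bigl|\hat{f}(\boldsymbol{\xi})\bigr|^{2}{\rm{d}}\boldsymbol{\xi},
\end{align*}
and the right-hand side becomes $\mu\,C_{\psi}\|f\|^{2}_{L^{2}(\mathbb{R}^{2})}$, which is precisely the logarithmic uncertainty principle for the PWT reported in \cite{ref19,ref43}.

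No step poses a real obstacle, but I would be careful about two bookkeeping issues: (i) ensuring the second $\ln|\boldsymbol{b}|$ in (\ref{73}) is read as $\ln|\boldsymbol{\xi}|$ (a typographical artifact of the theorem statement) so that the specialization genuinely produces the frequency-side term $\int \ln|\boldsymbol{\xi}|\,|\hat{f}(\boldsymbol{\xi})|^{2}{\rm{d}}\boldsymbol{\xi}$ of the PWT inequality, and (ii) confirming that the constant $\mu=\chi(\tfrac{1}{2})-\ln\pi$ carries over unchanged, since it originates from the Fourier-side logarithmic uncertainty and does not depend on $M$.
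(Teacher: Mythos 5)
Your proposal is correct and is exactly the direct specialization the paper intends for this corollary (which it states without proof): with $A=0$, $B=1$ the chirp factors vanish, $\mathscr{L}^{M}$ becomes the Fourier transform, $C_{\psi,M}$ reduces to $C_{\psi}$, and $\ln B=0$ kills the extra term in the constant. Your two bookkeeping remarks are also well taken — the second integral in (\ref{73}) should indeed read $\ln|\boldsymbol{\xi}|$ with exponent $2$ rather than $p$, and $\mu$ is independent of $M$, so the specialization goes through as you describe.
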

	
	\section{Potential Application of the PLCWT}
	\label{Applications}
	Image edge detection technology is the most basic content in image processing. Edge is the most important feature of an image. This section focuses on the potential applications of the PLCWT in edge detection. Simulation results show that this method has a good application prospect in edge detection.
	
	\subsection{Image Edge Detection}
	\label{Image}
	The two-dimensional WT can be used for image processing tasks, such as detection, extraction or classification of various features in an image. If the object to be detected has a specific direction, then we need a wavelet with good directional selectivity. This subsection mainly presents the theoretical implementation of the proposed PLCWT method in image edge detection.
	
	Let $f(x,y)$ is a two-dimensional image, $\boldsymbol{b}=(b_{1},b_{2})$. The direct discretization of the PLCWT of $f(x,y)$ is 
	\begin{align}
		\begin{split}
			W^{M}_{f}(\boldsymbol{b},a,\theta_{i})&=\frac{1}{a}\sum_{x=b_{1}-\frac{N}{2}}^{b_{1}+\frac{N}{2}}\sum_{y=b_{2}-\frac{N}{2}}^{b_{2}+\frac{N}{2}}f(x,y)\Phi(x,y),
		\end{split}
		\label{79}
	\end{align}
	in which
	\begin{align}
		\begin{split}
			\Phi(x,y)\!=\!\frac{1}{a}\,e^{-j\!\frac{A}{2B}\left[\left( x^{2}+y^{2}\right) -\left(b_{1}^{2}+b_{2}^{2}\right)  \right] }\psi\left(\!\frac{R_{i}\left(x-b_{1},y-b_{2}\right) }{a} \right),
		\end{split}
		\label{80}
	\end{align}
	where $N$ is the rang of convolution, $R_{i}$ is the rotation of  $\theta_{i}$ \
	
	The potential application of the PLCWT in image edge detection mainly has the following steps:
	\begin{enumerate}[(1)]
		\item Discretization of the PLCWT, as in (\ref{79}). The mother wavelet chooses Morlet wavelet; $\theta_{i}$ is used to detect edge patches almost parallel to the chosen direction, where $\theta_{i}=0, \frac{\pi}{8}, \frac{\pi}{4}, \frac{3\pi}{8}, \frac{\pi}{2}, \frac{5\pi}{8}, \frac{3\pi}{4}$, and $\frac{7\pi}{8}$.
		\item Calculate the PLCWT coefficients, and get the edges of eight directions at different scales. The eight sets of edge segments corresponding to the eight selected rotation angles at different scales can complement each other to form an uninterrupted side.
		\item Perform image fusion on the results in 2), and obtain an edge detection image by thresholding. In practical applications, it is very important to choose the optimal threshold method, and the most suitable threshold method needs to be selected according to the actual situation. 
	\end{enumerate}
	
	Based on the above discussion, we get the theoretical implementation process of the proposed method on edge detection, the schematic diagram is shown in Fig. \ref{fig:2}. In the next subsection, we verify the feasibility of the proposed method through simulation experiments.\
	
	\begin{figure}[t!]
		\centering
		\includegraphics[width=0.8\linewidth]{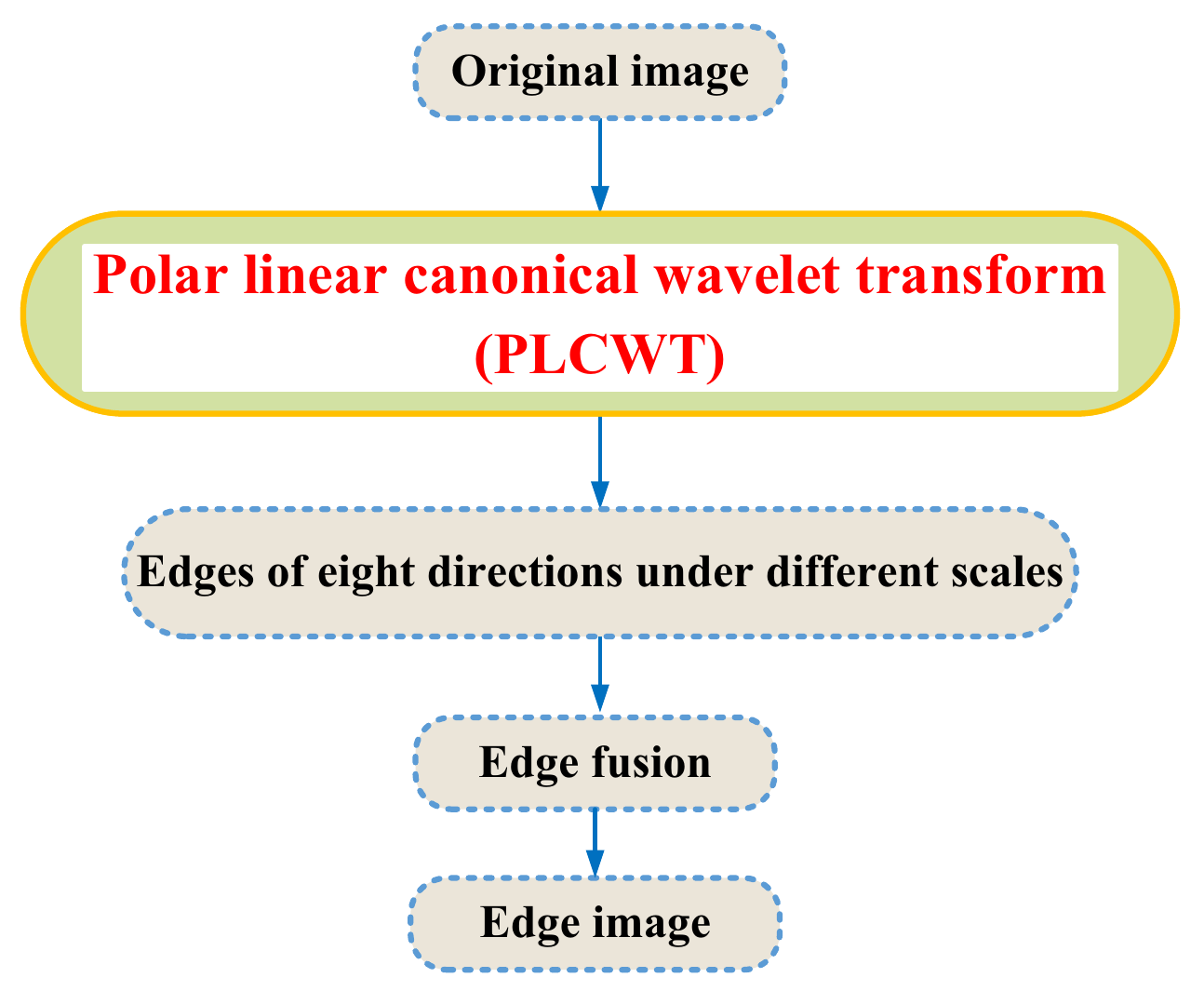}\\
		\caption{Schematic diagram of edge detection based on the proposed method.}
		\label{fig:2} 
	\end{figure}
	
	\subsection{Results of Experiments}
	\label{Results}
	In this subsection, we present some simulation results for different cases and compare the proposed method with the PWT \cite{ref23,ref26} in image edge detection. The simulation results show that the proposed PLCWT is feasible. \
	
	The following three test images are shown in Fig. \ref{fig:7}, which are regular pattern images, portrait images, and landscape images.
	
	\begin{figure}[h!] 
		\centering
		\subfigure[Wheel]{
			\begin{minipage}[b]{.3\linewidth}
				\centering
				\includegraphics[scale=0.43]{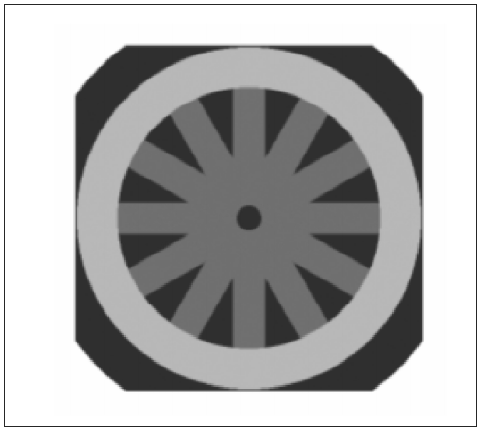}
		\end{minipage}	}
		\subfigure[Lenna]{
			\begin{minipage}[b]{.3\linewidth}
				\centering
				\includegraphics[scale=1.37]{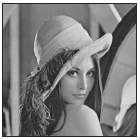}
		\end{minipage}	}
		\subfigure[Walkbridge]{
			\begin{minipage}[b]{.3\linewidth}
				\centering
				\includegraphics[scale=0.7]{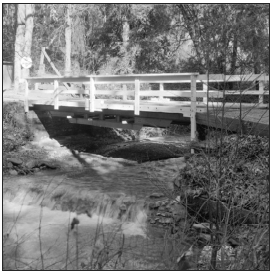}
		\end{minipage}	}
		\caption{The original test images.} 
		\label{fig:7}    
	\end{figure}

	\textbf{Case 1:} Let the parameter $\frac{A}{B}=0.01$, and the scale factor $a=1.5$. We selected 5 scale numbers and 8 directions, where $\theta_{i}=0, \frac{\pi}{8}, \frac{\pi}{4}, \frac{3\pi}{8}, \frac{\pi}{2}, \frac{5\pi}{8}, \frac{3\pi}{4}$, and $\frac{7\pi}{8}$. The edges in eight directions at five different scales extracted from the wheel image (see Fig. \ref{fig:4} (a)) by the polar linear canonical wavelet are shown in Fig. \ref{fig:3}. It can be seen from the Fig. \ref{fig:3} that the polar linear canonical wavelet can effectively extract edges with different directions and is more sensitive to directions. When $\frac{A}{B}=0$, the proposed method degenerates into the PWT. According to the results in Fig. \ref{fig:4}, compared with the PWT \cite{ref23,ref26}, the proposed method in this paper is feasible and effective in image edge detection.
	
	\begin{figure*}[t!]
		\centering
		\includegraphics[width=1.0\linewidth]{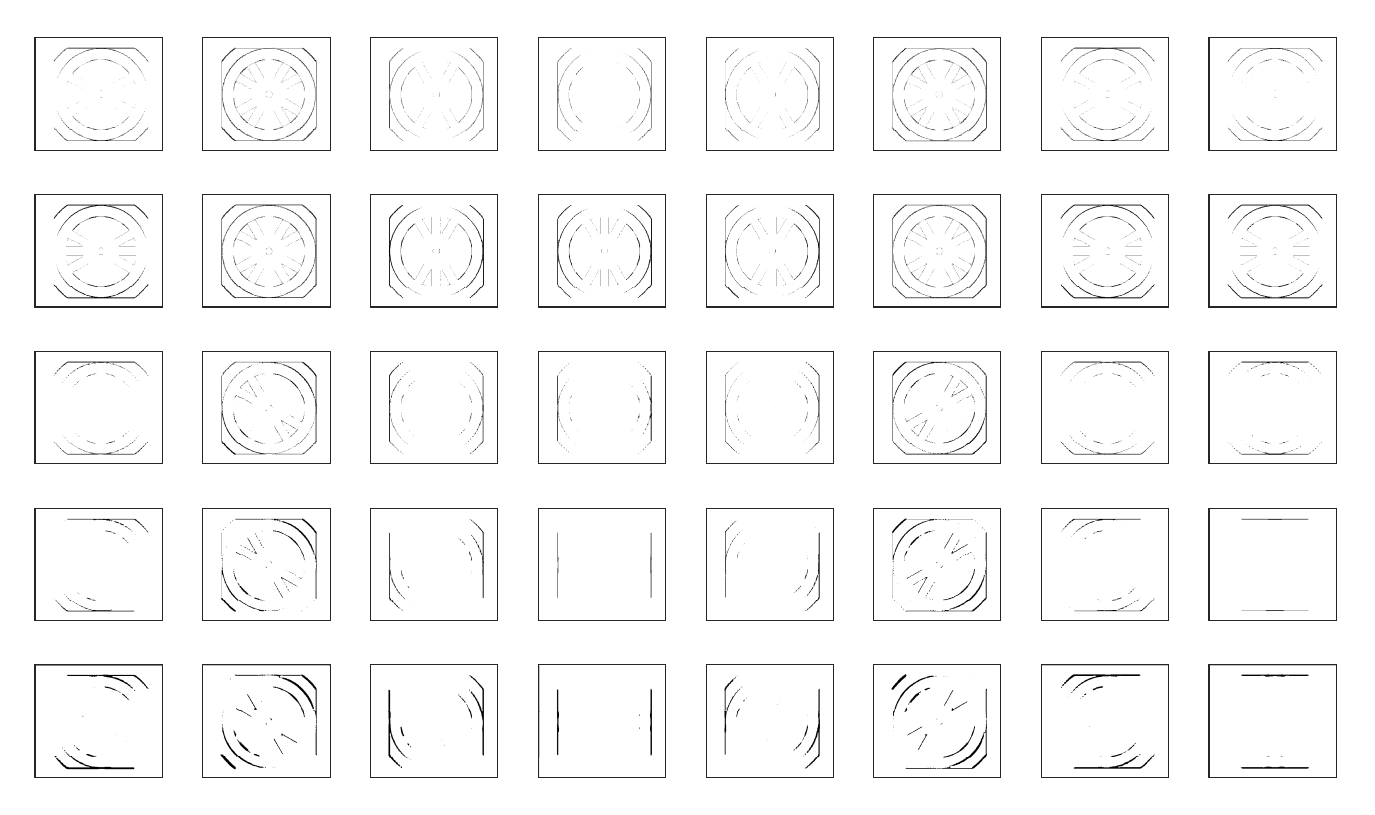}\\
		\caption{Edges of eight directions at different scales. From left to right the direction is $0, \frac{\pi}{8}, \frac{\pi}{4}, \frac{3\pi}{8}, \frac{\pi}{2}, \frac{5\pi}{8}, \frac{3\pi}{4}$, and $\frac{7\pi}{8}$ .}
		\label{fig:3} 
	\end{figure*}
	\begin{figure*}[h!] 
		\centering
		\subfigure[Original wheel image]{
			\begin{minipage}[b]{.3\linewidth}
				\centering
				\includegraphics[scale=0.45]{original_wheel.eps}
		\end{minipage}	}
		\subfigure[The proposed method]{
			\begin{minipage}[b]{.3\linewidth}
				\centering
				\includegraphics[scale=0.45]{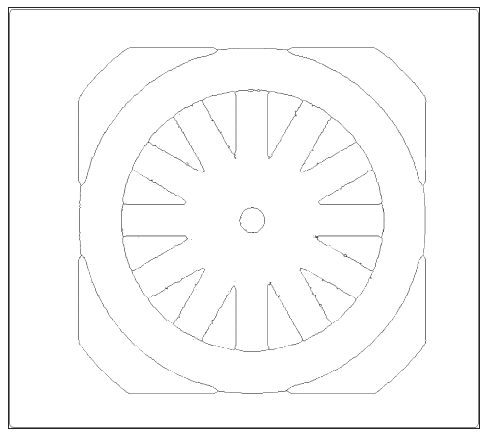}
		\end{minipage}	}
		\subfigure[PWT]{
			\begin{minipage}[b]{.3\linewidth}
				\centering
				\includegraphics[scale=0.45]{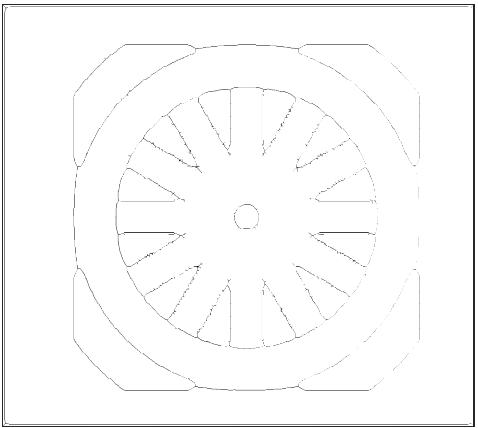}
		\end{minipage}	}
		\caption{Comparison of edge detection of the proposed method and the PWT \cite{ref23,ref26}.}
		\label{fig:4}     
	\end{figure*}
	
	\textbf{Case 2:} Take Lena in Fig. \ref{fig:5} (a) as an test example, the image size is $256*256$, the scale factor $a=1.5$, and the parameters $\frac{A}{B}=0.2,\, 0.12,\, 0.06,\, 0.01,\, 0$. The edge detection of the PLCWT under different parameters is shown in Fig. \ref{fig:5}. Simulation results show that the PLCWT has greater freedom and flexibility in edge detection. The proposed method in this paper can provide more accurate edge direction information, but it will detect many false edges at the same time, and the edge positioning accuracy is not high.
	
	\begin{figure*}[h!] 
		\centering
		\subfigure[Original Lenna image]{
			\begin{minipage}[b]{.3\linewidth}
				\centering
				\includegraphics[scale=1.5]{original_lena.eps}
		\end{minipage}	}
		\subfigure[$\frac{A}{B}=0.2$]{
			\begin{minipage}[b]{.3\linewidth}
				\centering
				\includegraphics[scale=1.5]{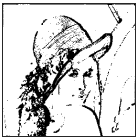}
		\end{minipage}	}
		\subfigure[$\frac{A}{B}=0.12$]{
			\begin{minipage}[b]{.3\linewidth}
				\centering
				\includegraphics[scale=1.5]{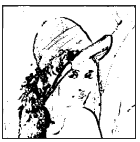}
		\end{minipage}	}
		\subfigure[$\frac{A}{B}=0.06$]{
			\begin{minipage}[b]{.3\linewidth}
				\centering
				\includegraphics[scale=1.5]{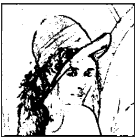}
		\end{minipage}	}
		\subfigure[$\frac{A}{B}=0.01$]{
			\begin{minipage}[b]{.3\linewidth}
				\centering
				\includegraphics[scale=1.5]{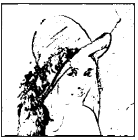}
		\end{minipage}	}
		\subfigure[$\frac{A}{B}=0$]{
			\begin{minipage}[b]{.3\linewidth}
				\centering
				\includegraphics[scale=1.5]{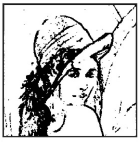}
		\end{minipage}	}
		\caption{The edges detection of the proposed method under different parameters $\frac{A}{B}$} 
		\label{fig:5}    
	\end{figure*}
	
	\textbf{Case 3:} Let the parameter $\frac{A}{B}=0.01$, and the scale factor $a=1.5$. In order to observe the applicability of the proposed method in practical applications, this experiment selects a relatively complex walkbridge image (see Fig. \ref{fig:6} (a)) as the test image, and the image size is $256*256$. Fig. \ref{fig:6} (b) shows that the proposed method can detect more edge details with high localization accuracy. It can be seen from the comparison in Fig. \ref{fig:6} that the method proposed in this paper also has a better effect in detecting the edges of some complex images compared with the PWT \cite{ref23,ref26}.
	
	\begin{figure*}[h!] 
		\centering
		\subfigure[Original walkbridge image]{
			\begin{minipage}[b]{.3\linewidth}
				\centering
				\includegraphics[scale=0.75]{original_bri.eps}
		\end{minipage}	}
		\subfigure[The proposed method]{
			\begin{minipage}[b]{.3\linewidth}
				\centering
				\includegraphics[scale=0.75]{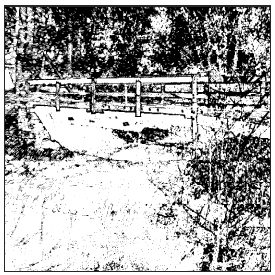}
		\end{minipage}	}
		\subfigure[PWT]{
			\begin{minipage}[b]{.3\linewidth}
				\centering
				\includegraphics[scale=0.75]{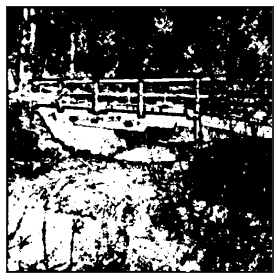}
		\end{minipage}	}
		\caption{Comparison of edge detection of the proposed method and the PWT \cite{ref23,ref26}.} 
		\label{fig:6}   
	\end{figure*}
	
	From the above simulation results, it can be seen that the method proposed in this paper can detect the edge of two-dimensional images, and has a certain prospect in the practical application scenarios related to images.
	
	\section{Conclusion}
	\label{Con}
	In this paper, a new transform method is proposed, namely the PLCWT. Firstly, the basic theory of the PLCWT is given in detail, including its definition, basic properties and inversion Formula. Secondly, the convolution and correlation theorems of the PLCWT are derived. Further, three uncertainty principles related to the PLCWT are also explored. Finally, the simulation experiment of the PLCWT in edge detection is shown, and the results illustrate the correctness and effectiveness of the proposed method.

\end{document}